\def\@settitle{%
  \vspace*{-2em}
  \begin{flushleft}%
    \LARGE\bfseries
    \strut\@title\strut
  \end{flushleft}%
}
\def\@setauthors{%
  \begingroup
  \def\thanks{\protect\thanks@warning}%
  \trivlist
  \raggedright
  \large \@topsep27\p@\relax
  \advance\@topsep by -\baselineskip
  \item\relax
  \author@andify\authors
  \def\\{\protect\linebreak}%
  \authors
  \ifx\@empty\contribs
  \else
    ,\penalty-3 \space \@setcontribs
    \@closetoccontribs
  \fi
  \normalfont
  \endtrivlist
  \endgroup
}
\def\@setaddresses{\par
  \nobreak \begingroup
  \small\raggedright
  \def\author##1{\nobreak\addvspace\smallskipamount}%
  \def\\{\unskip, \ignorespaces}%
  \interlinepenalty\@M
  \def\address##1##2{\begingroup
    \par\addvspace\bigskipamount\noindent
    \@ifnotempty{##1}{(\ignorespaces##1\unskip) }%
    {\ignorespaces##2}\par\endgroup}%
  \def\curraddr##1##2{\begingroup
    \@ifnotempty{##2}{\nobreak\noindent\curraddrname
      \@ifnotempty{##1}{, \ignorespaces##1\unskip}\/:\space
      ##2\par}\endgroup}%
  \def\email##1##2{\begingroup
    \@ifnotempty{##2}{\nobreak\noindent E-mail address%
      \@ifnotempty{##1}{, \ignorespaces##1\unskip}\/:\space
      \ttfamily##2\par}\endgroup}%
  \def\urladdr##1##2{\begingroup
    \def~{\char`\~}%
    \@ifnotempty{##2}{\nobreak\noindent\urladdrname
      \@ifnotempty{##1}{, \ignorespaces##1\unskip}\/:\space
      \ttfamily##2\par}\endgroup}%
  \addresses
  \endgroup
  \global\let\addresses=\@empty
}
\def\@setabstracta{%
    \ifvoid\abstractbox
  \else
    \skip@17pt \advance\skip@-\lastskip
    \advance\skip@-\baselineskip \vskip\skip@
    \box\abstractbox
    \prevdepth\z@ 
    \vskip-28pt
  \fi
}
\renewenvironment{abstract}{%
  \ifx\maketitle\relax
    \ClassWarning{\@classname}{Abstract should precede
      \protect\maketitle\space in AMS document classes; reported}%
  \fi
  \global\setbox\abstractbox=\vtop \bgroup
    \normalfont\small
    \list{}{\labelwidth\z@
      \leftmargin0pc \rightmargin\leftmargin
      \listparindent\normalparindent \itemindent\z@
      \parsep\z@ \@plus\p@
      
    }%
    \item[\hskip\labelsep\bfseries\abstractname.]%
}{%
  \endlist\egroup
  \ifx\@setabstract\relax \@setabstracta \fi
}
\def\ps@headings{\ps@empty
  \def\@evenhead{%
    \setTrue{runhead}%
    \normalfont\scriptsize
    \rlap{\thepage}\hfill
    \def\thanks{\protect\thanks@warning}%
    \leftmark{}{}}%
  \def\@oddhead{%
    \setTrue{runhead}%
    \normalfont\scriptsize
    \def\thanks{\protect\thanks@warning}%
    \rightmark{}{}\hfill \llap{\thepage}}%
  \let\@mkboth\markboth
}\ps@headings
\def\section{\@startsection{section}{1}%
  \z@{-1.4\linespacing\@plus-.5\linespacing}{.8\linespacing}%
  {\normalfont\bfseries\Large}}
\def\subsection{\@startsection{subsection}{2}%
  \z@{-.8\linespacing\@plus-.3\linespacing}{.5\linespacing\@plus.2\linespacing}%
  {\normalfont\bfseries\large}}
\def\subsubsection{\@startsection{subsubsection}{3}%
  \z@{.7\linespacing\@plus.2\linespacing}{-1.5ex}%
  {\normalfont\bfseries}}
\def\paragraph{\@startsection{paragraph}{4}%
  \z@{.7\linespacing\@plus.2\linespacing}{-1.5ex}%
  {\normalfont\itshape}}
\def\@secnumfont{\bfseries}
\renewcommand\contentsnamefont{\bfseries}
\def\@starttoc#1#2{\begingroup
  \setTrue{#1}%
  \par\removelastskip\vskip\z@skip
  \@startsection{}\@M\z@{\linespacing\@plus\linespacing}%
    {.5\linespacing}{
      \contentsnamefont}{#2}%
  \ifx\contentsname#2%
  \else \addcontentsline{toc}{section}{#2}\fi
  \makeatletter
  \@input{\jobname.#1}%
  \if@filesw
    \@xp\newwrite\csname tf@#1\endcsname
    \immediate\@xp\openout\csname tf@#1\endcsname \jobname.#1\relax
  \fi
  \global\@nobreakfalse \endgroup
  \addvspace{32\p@\@plus14\p@}%
  \let\tableofcontents\relax
}
\def\contentsname{Contents}
\def\l@section{\@tocline{2}{.5ex}{0mm}{5pc}{}}
\def\l@subsection{\@tocline{2}{0pt}{2em}{5pc}{}}
\proof\endcsname{\itshape}{\bfseries}{}{}
\newcommand{\mailto}[1]{\href{mailto:#1}{#1}}
\newcommand{\mathematicanb}[1]{\href{calculations/#1}{\texttt{#1}}}
\newcommand\numberthis{\addtocounter{equation}{1}\tag{\theequation}}
\declaretheorem[
	style=plain,
	name=Theorem,
	numberwithin=section,
	refname={Theorem,Theorems},
	Refname={Theorem,Theorems}
]{Thm}
\declaretheorem[
	style=plain,
	name=Proposition,
	numberlike=Thm,
	refname={Proposition,Propositions},
	Refname={Proposition,Propositions}
]{Prop}
\declaretheorem[
	style=plain,
	name=Corollary,
	numberlike=Thm,
	refname={Corollary,Corollaries},
	Refname={Corollary,Corollaries}
]{Cor}
\declaretheorem[
	style=plain,
	name=Conjecture,
	numberlike=Thm,
	refname={Conjecture,Conjectures},
	Refname={Conjecture,Conjectures}
]{Conj}
\declaretheorem[
	style=plain,
	name=Lemma,
	numberlike=Thm,
	refname={Lemma,Lemmas},
	Refname={Lemma,Lemmas}
]{Lem}
\declaretheorem[
	style=definition,
	name=Definition,
	numberlike=Thm,
	refname={Definition,Definitions},
	Refname={Definition,Definitions}
]{Def}
\declaretheorem[
	style=definition,
	name=Notation,
	numberlike=Thm,
	refname={Notation,Notations},
	Refname={Notation,Notations}
]{Not}
\declaretheorem[
	style=definition,
	name=Example,
	numberlike=Thm,
	refname={Example,Examples},
	Refname={Example,Examples}
]{Eg}
\declaretheorem[
	style=definition,
	name=Remark,
	numberlike=Thm,
	refname={Remark,Remarks},
	Refname={Remark,Remarks}
]{Rem}
\declaretheorem[
	style=definition,
	name=Observation,
	numberlike=Thm,
	refname={Observation,Observations},
	Refname={Observation,Observations}
]{Obs}
\declaretheorem[
	style=definition,
	name=Claim,
	numberlike=Thm,
	refname={Claim,Claims},
	Refname={Claim,Claims}
]{Claim}
\declaretheorem[
	style=definition,
	name=Identity,
	numberlike=Thm,
	refname={Identity,Identities},
	Refname={Identity,Identities}
]{Id}
\newcommand{\dd}{\mathrm{d}}
\newcommand{\modsh}{\overset{\shuffle}{=}}
\newcommand{\moddel}{\overset{\delta}{=}}
\newcommand{\sh}{\shuffle}
\newcommand{\argdash}{\mathop{\textrm{---}}}
\newcommand{\dblslash}{\mathrel{/\kern-3pt/}}
\newcommand{\floor}[1]{\left\lfloor #1 \right\rfloor}
\newcommand{\size}[1]{{\#{#1}}}
\newcommand{\abs}[1]{\left\lvert {#1} \right\rvert}
\DeclareMathOperator{\sgn}{sgn}
\DeclareMathOperator{\Cyc}{Cyc}
\newcommand{\Proj}{\mathbb{P}}
\newcommand{\Q}{\mathbb{Q}}
\newcommand{\C}{\mathbb{C}}
\newcommand{\Hbl}{\mathcal{H}}
\newcommand{\Bl}{\mathcal{B}}
\newcommand{\rel}{\mathcal{R}}
\DeclareMathOperator{\pr}{pr}
\DeclareMathOperator{\CR}{cr}
\DeclareMathOperator{\Li}{Li}
\DeclareMathOperator{\id}{id}
\begin{document}
	
	\title[Dan's reduction method for multiple polylogarithms]{A review of Dan's reduction method \\ for multiple polylogarithms}
	\author{Steven Charlton}
	
	\address{Fachbereich Mathematik \\	
		Auf der Morgenstelle 10 (Geb\"aude C) \\
		72076 \\
		T\"ubingen \\
		Germany}
	
	\email{\mailto{charlton@math.uni-tuebingen.de}}
	
	\date{8 March 2017}
	
	\keywords{Multiple polylogarithms, hyperlogarithms, iterated integrals, depth reduction}
	\subjclass[2010]{Primary 11G55}
	
	\begin{abstract}
		In this paper we will give an account of Dan's reduction method \cite{dan2011surla2} for reducing the weight \( n \) \emph{multiple logarithm} \( I_{1,1,\ldots,1}(x_1, x_2, \ldots, x_n) \) to an explicit sum of lower depth multiple polylogarithms in \( \leq n - 2 \) variables.  
		
		We provide a detailed explanation of the method Dan outlines, and we fill in the missing proofs for Dan's claims.  This establishes the validity of the method itself, and allows us to produce a corrected version of Dan's reduction of \( I_{1,1,1,1} \) to \( I_{3,1} \)'s and \( I_4 \)'s.  We then use the \emph{symbol of multiple polylogarithms} to answer Dan's question about how this reduction compares with his earlier reduction of \( I_{1,1,1,1} \), and his question about the nature of the resulting functional equation of \( I_{3,1} \).
		
		Finally, we apply the method to \( I_{1,1,1,1,1} \) at weight 5 to first produce a reduction to depth \( \leq 3 \) integrals.  Using some functional equations from our thesis, we further reduce this to \( I_{3,1,1} \), \( I_{3,2} \) and \( I_5 \), modulo products.  We also see how to reduce \( I_{3,1,1} \) to \( I_{3,2} \), modulo \( \delta \) (modulo products and depth 1 terms), and indicate how this allows us to reduce \( I_{1,1,1,1,1} \) to \( I_{3,2} \)'s only, modulo \( \delta \).
	\end{abstract}
	
	\maketitle
	
	\tableofcontents
	
	\section{Introduction}
	
	In \cite{dan2008surla}, \citeauthor{dan2008surla} describes a strategy for attempting to attack Zagier's polylogarithm conjecture for \( \zeta_F(4) \).  He reduces Zagier's conjecture to an analytic conjecture \cite[Conjecture 3 in ][]{dan2008surla} about the existence of a \emph{regulator map} with certain properties, and the following combinatorial conjecture about relations between weight 4 \emph{multiple polylogarithms} (MPL's).
	
	\begin{Conj}[Conjecture 6 in \cite{dan2008surla}]
		The following sum is seen to vanish under the co-boundary map \( \delta_{2,2} \colon \Hbl_4(E) \to \Hbl_2(E) \wedge \Hbl_2(E) \), 
		\[
			B(x, y; z) \coloneqq  I_{3,1}(x, z) - I_{3,1}(y, z) - I_{3,1}(\tfrac{x}{y}, z) + I_{3,1}(\tfrac{1-x}{1-y}, z) - I_{3,1}(\tfrac{1 - 1/x}{1-1/y}, z) \, .
		\]  Therefore \( B(x,y; z) \) should be expressible as a linear combination of \( \Li_4 \) terms.
	\end{Conj}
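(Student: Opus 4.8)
The plan is to compute the \( (2,2) \)-component of the cobracket \( \delta \) on each term \( I_{3,1}(\argdash,z) \), to observe that for a fixed \( z \) the \( z \)-dependence factors out of one slot of the wedge, and to recognise the five remaining dilogarithm terms as Abel's five-term relation.

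\emph{Step 1: the cobracket of a single \( I_{3,1} \).} Writing \( I_{3,1}(a,z) \) as an iterated integral \( I(0;\,\ast,0,0,\ast\,;1) \) whose two non-zero entries are built from \( a \) and \( z \), I would apply Goncharov's coproduct formula for iterated integrals and project onto the \( \Hbl_2 \wedge \Hbl_2 \) summand; equivalently — and this is the tidier route — I would compute the symbol \( \mathcal{S}\bigl(I_{3,1}(a,z)\bigr) \in \bigotimes^4 E^\times \) and apply the projection onto \( \Hbl_2 \wedge \Hbl_2 \) that realises \( \delta_{2,2} \) at the level of symbols (cut in the middle, send each half to its dilogarithm class in \( \Hbl_2 \), and wedge, antisymmetrising). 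Because the inner entries are \( 0 \), almost all of the a priori contributions collapse: the pieces coming from a weight-one-times-weight-one quotient are products and vanish in \( \Hbl_2 \), the pieces with coinciding endpoints vanish, and the shuffle-regularised remainder is a single term
\[
	\delta_{2,2}\bigl(I_{3,1}(a,z)\bigr) = c\,\{a\}_2 \wedge \{z\}_2
\]
for a non-zero rational constant \( c \) (up to replacing \( a \) or \( z \) by a member of its dihedral orbit \( a, 1-a, 1/a, \dots \), which only changes \( c \) by a sign). The essential feature is that, once the second argument is frozen, the first argument sits in a genuine \( \Hbl_2 \)-slot while the other slot only remembers the fixed class \( \{z\}_2 \).

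\emph{Step 2: assembling \( B \) and invoking the five-term relation.} Substituting the arguments \( x,\ y,\ x/y,\ \tfrac{1-x}{1-y},\ \tfrac{1-1/x}{1-1/y} \) with signs \( +,-,-,+,- \) and pulling out the common factor \( \{z\}_2 \) gives
\[
	\delta_{2,2}\bigl(B(x,y;z)\bigr) = c\,\Bigl( \{x\}_2 - \{y\}_2 - \{\tfrac{x}{y}\}_2 + \{\tfrac{1-x}{1-y}\}_2 - \{\tfrac{1-1/x}{1-1/y}\}_2 \Bigr) \wedge \{z\}_2 .
\]
Using \( \{1/w\}_2 = -\{w\}_2 \) to rewrite \( -\{x/y\}_2 = \{y/x\}_2 \), the bracket is exactly Abel's five-term relation \( \{x\}_2 - \{y\}_2 + \{y/x\}_2 + \{\tfrac{1-x}{1-y}\}_2 - \{\tfrac{1-1/x}{1-1/y}\}_2 \), which is zero in \( \Hbl_2 \) (it is the defining relation of the Bloch group). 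Hence \( \delta_{2,2}(B(x,y;z)) = 0 \). As for the concluding sentence: every weight-three multiple polylogarithm is, modulo products, a combination of \( \Li_3 \)'s, so \( \delta_{3,1}(B) \) automatically lies in \( \{\Li_3\text{-terms}\} \wedge \Hbl_1 \); the vanishing of \( \delta_{2,2}(B) \) is then precisely the remaining obstruction, which the (conjectural) injectivity of \( \delta_{2,2} \) on \( \Hbl_4 \) modulo classical polylogarithms predicts to be the only one — an actual expression of \( B \) as a sum of \( \Li_4 \)'s, however, needs Kummer-type functional equations and goes beyond the cobracket computation.

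\emph{Main obstacle.} The real work is Step 1: nailing down the constant \( c \) and, more importantly, verifying that there is \emph{nothing else} in \( \delta_{2,2}(I_{3,1}(a,z)) \) — no \( \{az\}_2 \), no \( \{a/z\}_2 \), no constant-argument slot — since any such stray term would survive the substitution in Step 2 (the weights \( +,-,-,+,- \) do not sum to zero, so no leftover piece can be tolerated). Here the coinciding entries and base points of the iterated integral force careful shuffle-regularisation, and it is easy to lose a genuine term or gain a spurious product; computing \( \delta_{2,2}(I_{3,1}) \) independently from the symbol, where regularisation is automatic, is the safest check. Matching the precise form of the five-term relation one lands on to the five arguments Dan wrote down is the last, purely bookkeeping, hurdle — and, as the abstract signals, it is at this stage that one is likely to spot the small corrections needed to make Dan's \( B \) literally \( \delta_{2,2} \)-closed.
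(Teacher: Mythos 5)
You should first be aware that the paper contains no proof of this statement: it is quoted verbatim as Dan's Conjecture 6 (equivalently Goncharov's earlier conjecture), stated purely as motivation, and its resolution is attributed to Gangl's explicit 122-term \( \Li_4 \) expression elsewhere. So there is no internal proof to compare against; what can be judged is whether your argument establishes the provable clause, namely the vanishing under \( \delta_{2,2} \) — and it does, essentially by the standard argument. Your Step 1 is correct, and the danger you flag collapses exactly as you hope: writing \( I_{3,1}(a,z) = I(0 \mid a, 0, 0, z \mid 1) \), the weight \( (2,2) \) cuts in Goncharov's coproduct are indexed by the pairs \( \{1,2\} \), \( \{1,4\} \), \( \{3,4\} \) (all other pairs leave a right-hand factor that is a product of two weight-one pieces and dies in the Lie coalgebra). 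The cut \( \{1,4\} \) has right factor \( I(a \mid 0,0 \mid z) = \tfrac12 \log^2(\cdot) \), a shuffle product, hence zero in \( \Hbl_2 \); the cut \( \{3,4\} \) has right factor \( I(0 \mid a, 0 \mid 0) \), which vanishes because its endpoints coincide; only \( \{1,2\} \) survives, and after shuffle-regularising \( I(0 \mid 0, z \mid 1) \) it gives \( \pm \) (class of \( \Li_2(a) \)) \( \wedge \) (class of \( \Li_2(z) \)), with the same sign and the same \( z \)-slot for every term. Your Step 2 is then fine: using \( \Li_2(1/w) \equiv -\Li_2(w) \) modulo products, the five first arguments with signs \( +,-,-,+,- \) are precisely a five-term relation, so the bracket vanishes in \( \Hbl_2(E) \) and \( \delta_{2,2} B(x,y;z) = 0 \).

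What your proposal cannot do — and you rightly hedge on this — is prove the second sentence of the statement. That \( B(x,y;z) \) is consequently a combination of \( \Li_4 \)'s is exactly the conjectural content (a weight 4 instance of the expectation that the kernel of \( \delta_{2,2} \), modulo products, is spanned by classical polylogarithms); no cobracket computation can deliver it. The paper records that this part was settled only by Gangl's explicit 122-term identity, which is independent of, and far harder than, the \( \delta_{2,2} \) calculation. So treat your argument as a verification of the "is seen to vanish" clause, not as a proof of the conjecture itself.
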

	
	As Dan notes, this combinatorial conjecture was already proposed by Goncharov \cite{goncharov1994polylogarithms}.  It has since been resolved by Gangl \cite[Theorem 16 in][]{gangl2015weight4mpl}, who gives a a 122-term expression involving \( \Li_4 \)'s which has the same symbol as (some version of) \( B(x,y;z) \), modulo products.

	\subsubsection*{Reduction of \( I_{1,1,1,1} \)} A key ingredient which allows Dan to carry through the reduction of Zagier's conjecture to the above combinatorial conjecture, is his reduction of the weight 4 hyperlogarithm \( I(a \mid b, c,d,e \mid f) \) to \( I_{3,1} \)'s and \( \Li_4 \)'s.  By setting the lower bound \( a = 0 \) and the upper bound \( f = 1 \) we can equivalently work with the \emph{quadruple-logarithm} \( I_{1,1,1,1}(w, x, y, z) \).  Dan presents his reduction of \( I(a \mid b,c,d,e \mid f) \) to \( I_{3,1} \) and \( \Li_4 \)'s as the main theorem, Th\'eor\`eme 3, in \cite{dan2008surla}. \medskip
	
	Unfortunately, the final expression Dan gives for this reduction is not correct, as it contains a number of typos.  By exploiting the structure of the reduction, Gangl was able to provide the necessary corrections to make this reduction work.  These mistakes and Gangl's corrections are given in \autoref{thm:dan_wt4_previous} and \autoref{rem:dan_mistake_1} below.
	
	\subsubsection*{Reduction of \( I_{1,\ldots,1}\)}  Later, in \cite{dan2011surla2}, \citeauthor{dan2011surla2} outlines a completely general and systematic method for reducing the generic weight \( n \) hyperlogarithm \( I(a_0\mid a_1,\ldots,a_n\mid a_{n+1}) \) to a combination of iterated integrals in `\( \leq n - 2 \) variables'.  That is to say, he reduces the depth \( n \) iterated integral to a sum of depth \( n - 2 \) iterated integrals, so the number of `slots' for non-zero arguments decreases by 2.  We can again set the lower and upper bounds to \( 0 \) and \( 1 \), to work with the \emph{multiple logarithm} \( I_{1,\ldots,1}(x_1, \ldots, x_n) \) instead. \medskip
	
	This systematic reduction method is \emph{not} simply a generalisation of the weight 4 case, but it involves a very different approach.  Dan presents this systematic method via a number of unsubstantiated claims, with limited explanation of the various steps.  After applying it to the case \( n = 4 \), Dan obtains a different reduction for \( I(a \mid b,c,d,e \mid f) \) to \( I_{3,1} \)'s and \( \Li_4 \)'s, in Th\'eor\`eme 2 of \cite{dan2011surla2}.
	
	Again, this weight 4 reduction is not correct as even the \emph{symbol} (an algebraic invariant attached to MPL's, which should vanish on any genuine multiple polylogarithm identity) does not vanish.  Moreover, the results of this reduction method lack sufficient structure to enable `easy' error correction; Gangl was unable to identify any candidate typos in the result.  This situation naturally casts much doubt on the validity of Dan's systematic reduction method.  We were therefore motivated to undertake an investigation to determine whether the method is correct, and to rectify the weight 4 reduction in Th\'eor\`eme 2 if possible. \medskip
	
	Fortunately, the systematic reduction method itself \emph{is} correct, so by implementing it in Mathematica \cite{Mathematica} I can present a corrected version of Dan's result in \autoref{thm:dan_wt4_correct} below.  This result is similar enough to Dan's (in number of terms, sizes of coefficients, agreement of cross-ratio argument) to make me believe that this was the version he intended to write down.  It is still not clear exactly where in the calculation Dan's mistake occurred, but the effort needed to find it not warranted now that we know the systematic reduction method \emph{is} correct.
	
	\subsection{Structure of the paper}
	
	In \autoref{sec:mplbackground} we review some background material on the the properties of Chen iterated integrals, the definitions of hyperlogarithms and multiple polylogarithms, and the construction of the abstract algebraic space \( \Hbl_n(E) \) of weight \( n \) multiple polylogarithms over \( E \).  In \autoref{sec:danmethod} we explain the steps of Dan's systematic reduction method in detail, and provide all necessary proofs.  In \autoref{sec:dan4} we apply the method to \( I(a \mid b,c,d,e \mid f) \leftrightarrow I_{1,1,1,1}(w, y, x, z) \) to correct Dan's weight 4 reduction.  In \autoref{sec:dan5} we also apply the method to \( I_{1,1,1,1,1}(v, w, x, y, z) \) to obtain a new reduction, writing \( I_{1,1,1,1,1} \) in terms of depth \( \leq 3 \) integrals initially.  By using certain identities and functional equations from \cite{charlton2016identities} we can further reduce this to \( I_{3,1,1} \), \( I_{3,2} \) and \( I_5 \), modulo products, or even to \( I_{3,2} \) `modulo \( \delta \)'. \medskip
	
	The reader who is only interested in seeing the correct reduction of \( I_{1,1,1,1}(w,x,y,z) \) can turn directly to \autoref{thm:dan_wt4_correct} and to \autoref{thm:dan_wt4_previous}.  The reductions of \( I_{1,1,1,1,1}(v,w,x,y,z) \) can be found in \autoref{sec:dan5}, specifically \autoref{id:i11111_dan_reduction}, \autoref{thm:dan_wt5_as_311_32_5_nonexplicit} (or \autoref{thm:dan_wt5_as_311_32_5} for the explicit version) and \autoref{thm:dan_wt5_as_i32}. 
	
	\subsubsection*{Supporting calculations} Mathematica worksheets with supporting calculations are available at \url{https://www.math.uni-tuebingen.de/user/charlton/publications/dan_reduction/}, or as ancillary files from the arXiv.
	
	\subsection{Overview of the reduction method}\label{sec:overview}
	
	We close this introduction with an overview of how the systematic reduction is supposed to work.  This will allow the reader to have a broad overview of the steps in the method, without agonising over the details initially.  References to the relevant definitions, and proofs are provided.
	
	\subsubsection*{Set-up:} Introduce (\autoref{def:generalisedhyperlog}), a slight generalisation \[
	H(a_0 \mid a_1, \ldots, a_n \dblslash x \mid a_{n+1})  \, , \]
	of the usual hyperlogarithm \( I(x_0 \mid x_1, \ldots, x_m \mid x_{m+1}) \), to be defined using the differential form
	\[
	\omega(a_i, x) \coloneqq \frac{(a_i - x)}{(t - a_i)(t - x)} \, \dd t \, .	
	\]
	This reduces directly to the usual hyperlogarithm when \( x = \infty \).  For other \( x \), this generalisation can still be expressed in terms of the usual hyperlogarithm.  (\Autoref{prop:hasi})
	
	This gives meaning to the symbol \( [a_0 \mid a_1, \ldots, a_n \dblslash x \mid a_{n+1}] \) in the algebraic space \( \mathcal{H}_n(E) \) describing multiple polylogarithms.  (\Autoref{def:hasi})
	
	\subsubsection*{Swap out \( x \):} Show that the hyperlogarithms obtained by swapping out one of the \( a_i \)'s with the new parameter \( x \), namely
	\[
	[a_0 \mid a_1, \ldots, a_n \dblslash x \mid a_{n+1}] + [a_0 \mid a_1, \ldots, a_{i-1}, x, a_{i+1}, \ldots, a_n \dblslash a_i \mid a_{n+1}] \, ,
	\]
	can be reduced to a sum in \( \leq n-2 \) variables (\autoref{prop:switchx}).  In the algebraic space \( \Hbl_n(E) \), this is done with the \( A \) and \( B \) operators (\autoref{def:operatorA}, \autoref{def:operatorB}), and packaged into the \( D \) operator (\autoref{def:operatorD}).
	
	\subsubsection*{Build a transposition of \( a_i \):} Do this three times, to swap out \( a_i \leftrightarrow x \), then swap out \( a_j \leftrightarrow a_i \), and finally swap out \( x \leftrightarrow a_j \).  This gives a transposition 
	\[
	[a_0 \mid a_1, \ldots, a_i, \ldots, a_j, \ldots, a_n \dblslash x \mid a_{n+1}] + [a_0 \mid a_1, \ldots, a_j, \ldots, a_i, \ldots, a_n \dblslash x \mid a_{n+1}]	\, ,
	\]
	of the \( a_i \)'s as a sum in \( \leq n-2 \) variables (\autoref{prop:transposition}).
	
	\subsubsection*{Apply to \( a_1 a_2 \shuffle a_3 \ldots a_n \):} Each term in this product can be converted back to \( a_1 \ldots a_n \), by some suitable permutation.  The previous step allows us to write the corresponding integrals as a sum in \( \leq n-2 \) variables.
	
	Since \( \sum_{\sigma \in S(2,n-2)} \sgn(\sigma) = \floor{n/2} \) (\autoref{prop:signof2shufflen-2}), we can sum all of the terms and write \( \floor{n/2} [a_0 \mid a_1, \ldots, a_n \dblslash x \mid a_{n+1}] \) as a sum in \( \leq n-2 \) variables. (\Autoref{thm:reductionofhyperlog})
	
	\subsubsection*{Proceed in an efficient way:} Write down the terms in the following manner
	\begin{align*}
	[a_0 \mid a_1 a_2 \shuffle a_3 \ldots a_n \dblslash x \mid a_{n+1}] ={} & A^n_{1,2} + {} \\
	& {} + A^n_{1,3} + A^n_{2,3} + {} \\
	& {} + A^n_{1,4} + A^n_{2,4} + A^n_{3,4} + {} \\
	& {} + A^n_{1,5} + A^n_{2,5} + A^n_{3,5} + A^n_{4,5} + \cdots \, ,
	\end{align*}
	where 
	\[
		A^n_{i,j} = [a_0 \mid \ldots, \underbrace{a_1}_{\mathclap{\text{position \( i \)}}}, \ldots, \underbrace{a_2}_{\mathclap{\text{position \( j \)}}}, \ldots \dblslash x \mid a_{n+1}]
	\] has \( a_1 \) in position \( i \), and \( a_2 \) in position \( j \).  Each \( A^n_{i,j} + A^n_{i+1,j} \) is a transposition, so can be written as a sum in \( \leq n-2 \) variables.  This leaves \( A^n_{1,2} + A^n_{1,4} + A^n_{1,6} + \cdots \).  (\Autoref{lem:shuffletoA1n})
	
	\subsubsection*{Finish:} Use that \( A^n_{1,2m} = (A^n_{1,2m} - A^n_{1,2m-1}) + (A^n_{1,2m-1} - A^n_{1,2m-2}) + A^n_{1,2m-2} \), to replace \( A^n_{1,2m} \) with \( A^n_{1,2m-2} \) and some transpositions that are a sum in \( \leq n-2 \) variables.  Push this all the way down to \( A^n_{1,2} \) (\autoref{lem:sumofleftover}), and so write \( \floor{n/2} A_{1,2} = \floor{n/2} [a_0 \mid a_1, \ldots, a_n \dblslash x \mid a_{n+1}] \) again as a sum in \( \leq n-2 \) variables, but in a more economical way (\autoref{thm:reduction}).
		
	\subsection*{Acknowledgements} 
	
		This work was undertaken as part my PhD thesis, and was completed with the support of Durham Doctoral Scholarship funding.  The preparation of this paper occurred while in T\"ubingen, with the support of Teach@T\"ubingen Scholarship funding.
		
		I would like to thank Herbert Gangl for initially suggesting this project, and for providing (several times) the corrections to Dan's first reduction of \( I_{1,1,1,1} \), in \autoref{thm:dan_wt4_previous} below.  Moreover, the compendium of weight 4 MPL functional equations and identities from \cite{gangl2015weight4mpl} was helpful in obtaining the reduction in \autoref{thm:dan_wt4_correct} in terms of \( I_{3,1} \) only, and in analysing Dan's resulting \( I_{3,1} \) functional equation in \autoref{sec:i31_fe}.  That compendium also motivated a similar study of weight 5 MPL identities, presented already in \cite{charlton2016identities}, which was used in the weight 5 reduction in \autoref{sec:dan5} below.
		
	\section{Background on iterated integrals, and multiple polylogarithms}\label{sec:mplbackground}
	
	We recall the properties of Chen iterated integrals, the definition of multiple polylogarithms, and describe the construction of an algebraic space \( \Hbl_n(E) \) which captures the relations of weight \( n \) multiple polylogarithms.  We also review the symbol of multiple polylogarithms.
	
	\subsection{Chen iterated integrals} \label{sec:chenmpl}
	
	Let \( M \) be a smooth manifold, and let \( \eta_1, \ldots, \eta_n \) be a collection of smooth 1-forms on \( M \).  Let \( \gamma \colon [0,1] \to M \) be a piecewise smooth path on \( M \).
	
	\begin{Def}[Chen iterated integrals, \cite{chen1977integrals}]
		The iterated integral of \( \eta_1, \ldots, \eta_n \) along the path \( \gamma \) is defined by
		\[
			\int_\gamma \eta_1 \circ \cdots \circ \eta_n \coloneqq \int_{\Delta} \gamma^\ast\eta_1(t_1) \wedge \cdots \wedge \gamma^\ast\eta_n(t_n) \, ,
		\]
		where the region of integration \( \Delta \) consists of all \( n \)-tuples \( (t_1, t_2, \ldots, t_n) \) with \( 0 \leq t_1 < t_2 < \cdots < t_n \leq 1 \).  Here \( \gamma^\ast \eta_i \) is the pullback of \( \eta_i \) to the interval \( [0,1] \) by \( \gamma \).
	\end{Def}
	
	\begin{Rem}\label{rem:iterated}
		The name iterated integral is justified by the observation that
		\[
			\int_\gamma \eta_1 \circ \cdots \circ \eta_{n-1} \circ \eta_n = \int_{t=0}^1 \left( \int_{\gamma_t} \eta_1 \circ \cdots \circ \eta_{n-1} \right) \gamma^\ast \eta_n(t) \, ,
		\]
		where \( \gamma_t = \gamma\rvert_{[0,t]} \) is the restriction of \( \gamma \) to the interval \( [0,t] \).  This allows one to recursively find \( \int_\gamma \eta_1 \circ \cdots \circ \eta_n \) by integrating `shorter length' iterated integrals.
	\end{Rem}
	
	We will need to make use of the following properties of iterated integrals.
	
	\subsubsection*{Empty integral}  For any path \( \gamma \), the empty integral satisfies
	\[
		\int_\gamma \coloneqq 1
	\]
	
	\subsubsection*{Path composition}  Let \( \alpha, \beta : [0,1] \to M \) be two paths, such that \( \alpha(1) = \beta(0) \).  Write \( \alpha\beta \) for path obtained by travelling along \( \alpha \) first, followed by \( \beta \).  Then
	\[
	\int_{\alpha\beta} \eta_1 \circ \cdots \circ \eta_n = \sum_{i=0}^n \int_\alpha \eta_1 \circ \cdots \circ \eta_i \int_{\beta} \eta_{i+1} \circ \cdots \circ \eta_n \, .
	\]
	
	\subsubsection*{Shuffle product}  Iterated integrals along the path \( \gamma \) multiply using the shuffle product formula
	\[
		\int_\gamma \eta_1 \circ \cdots \circ \eta_r \int_\gamma \eta_{r+1} \circ \cdots \circ \eta_{r+s} = \sum_{\sigma \in S(r,s)} \int_\gamma \eta_{\sigma(1)} \circ \cdots \circ \eta_{\sigma(r+s)} \, ,
	\]
	Here \( S(r,s) \) is the set of \( (r,s) \)-shuffles, defined as the following subset of permutations in the symmetric group \( S_{r+s} \)
	\[
		S(r,s) \coloneqq \Set{\sigma \in S_{r+s} | \text{\(\sigma(1) < \cdots < \sigma(r)\), and \( \sigma(r+1) < \cdots < \sigma(r+s) \)} } \, .
	\]
	
	\begin{Rem}\label{rem:shuffle}
	We can formally write the integrands in the shuffle product result by way of the \emph{shuffle product of words} over the alphabet \( \mathfrak{A} = \Set{\eta_1, \ldots, \eta_n} \).  We have
	\[
		\eta_1 \cdots \eta_r \shuffle \eta_{r+1} \cdots \eta_{r+s} = \sum_{\sigma \in S(r,s)} \eta_{\sigma(1)} \cdots \eta_{\sigma(r+s)} \, .
	\]
	The terms in the shuffle product arise from \emph{riffle shuffling} the two words in all possible ways, like a deck of cards.   Taking the integral over \( \gamma \) of both sides gives the previous formula.
	\end{Rem}
	
	\begin{Def}[Recursive definition of \( \sh \)]\label{def:shuffle}
	The shuffle product of words over some alphabet \( \mathfrak{A} \) is amenable to a recursive definition, as follows
	\[
		a w_1 \shuffle b w_2 = a (w_1 \shuffle b w_2) + b (a w_1 \shuffle w_2) \, ,
	\]
	where \( w_1, w_2 \) are two words over some \( A \), and \( a, b \) are letters from the alphabet \( \mathfrak{A} \).  The base cases involving a shuffle product with the empty word \( \mathbbm{1} \) are as follows
	\[
		\mathbbm{1} \shuffle w = w \shuffle \mathbbm{1} \coloneqq w \, .
	\]
	\end{Def}
	
	These integrals also satisfy a number of further properties.
	
	\subsubsection*{Reversal of paths} If \( \gamma \colon [0,1] \to M \) is a path, and \( \gamma^{-1}(t) = \gamma(1-t) \) denotes the path \( \gamma \) traversed in the opposite direction, then we have the following equality
	\[
	\int_{\gamma ^{-1}} \eta_1 \circ \cdots \circ \eta_n = (-1)^n \int_{\gamma} \eta_n \circ \cdots \circ \eta_1 \, .
	\]
	
	\subsubsection*{Functoriality} If \( f \colon M' \to M \) is a smooth map between manifolds, and \( \gamma \colon [0,1] \to M' \) is a path in \( M \) ,then
	\[
	\int_\gamma f^\ast \eta_1 \circ \cdots \circ f^\ast \eta_n = \int_{f(\gamma)} \eta_1 \circ \cdots \circ \eta_n \, .
	\]
	
	\subsection{Definition of hyperlogarithms and multiple polylogarithns}
	
	Iterated integrals in the above sense are rather general objects.  We are interested in the specific case where \( M = \Proj^1(\C) \setminus S \), for some set of points \( S \), which leads to hyperlogarithms and multiple polylogarithms.  We introduce the following basic differential form of interest.
	
	\begin{Def}
		The unique differential form \( \omega(a_i) \) of degree 1, holomorphic on \( \Proj^1(\C) \setminus \Set{a_i} \) which has a pole of order 1 and residue \( +1 \) at \( a_i \) is
		\[
		\omega(a_i) \coloneqq \frac{\dd t}{t - a_i} \, .
		\]
	\end{Def}
	
	With the family of differential forms \( \omega(a_i) \), \( a_i \in S \), we define the hyperlogarithm functions as follows.
	
	\begin{Def}[Hyperlogarithm]
		Let \( a_0, \dots, a_{n+1} \in \Proj^1(\C) \), such that \( a_0 \neq a_1 \), and \( a_n \neq a_{n+1} \).  The hyperlogarithm \( I(a_0 \mid a_1, \ldots, a_n \mid a_{n+1}) \) is a complex multivalued function of the \( (n+2) \)-tuple \( (a_0, a_1, \ldots, a_n, a_{n+1}) \) defined by the following iterated integral
		\[
			I(a_0 \mid a_1, \ldots, a_n \mid a_{n+1}) \coloneqq \int_{a_0}^{a_{n+1}} \omega(a_1) \circ \cdots \circ \omega(a_n) \, ,
		\]
		where \( \gamma \) is some path from \( a_0 \) to \( a_{n+1} \).
		
		Since the integral depends on (the homotopy class in \( \Proj^1(\C) \setminus \Set{a_1, \ldots, a_n} \) of) the path \( \gamma \), the result is a multivalued function.  The requirement \( a_0 \neq a_1 \), and \( a_{n} \neq a_{n+1} \) ensures that the integral is convergent.
	\end{Def}

	Dan uses the notation \( H(a_0 \mid a_1, \ldots, a_n \mid a_{n+1}) \) for the hyperlogarithm \( I(a_0 \mid a_1, \ldots, a_n \mid a_{n+1}) \).  We prefer to use the \( I \) notation as this seems to be more established in the literature \cite{goncharov2001multiple,gangl2015weight4mpl}, although we compromise and separate the integral bounds \( a_0 \), \( a_{n+1} \) with `\( \mid \)' instead of the `;' used elsewhere.  We reserve \( H \) for Dan's generalisation, which we introduce in \autoref{sec:methodstart}.

	\begin{Rem}[Invariance under affine transformations]\label{rem:affine}
		The hyperlogarithm is invariant under affine transformations of the form \( a_i \mapsto \alpha a_i + \beta \), for \( \alpha \neq 0 \in \C \), and \( \beta \in \C \).  This follows from the functoriality of iterated integrals under the map \( \C\to \C, x \mapsto \alpha x + \beta \).
	\end{Rem}
	
	As Goncharov notes \cite[Section 2]{goncharov2001multiple}, the properties of the hyperlogarithm \( I(a_0 \mid a_1, \ldots, a_n \mid a_{n+1}) \) change drastically if any one of the variables \( a_i \) becomes \( 0 \).  It is therefore useful to introduce some extra notation to clearly distinguish the cases.
	
	\begin{Def}[Multiple polylogarithm]\label{def:mpl}
		Let \( s_1, \ldots, s_k \) be positive integers, and \( a_1, \ldots, a_k \in \C\setminus\set{0} \) be non-zero complex variables.  The multiple polylogarithm (MPL) \( I_{s_1,\ldots,s_k}(a_1,\ldots,a_k) \) is defined as the following hyperlogarithm
		\[
			I_{s_1,\ldots,s_k}(a_1,\ldots,a_k) \coloneqq I(0 \mid a_1, \{0\}^{s_1-1}, \ldots, a_k, \{0\}^{s_k-1} \mid 1) \, ,
		\]
		where \( \{0\}^s \) denotes the string \( 0, \ldots, 0 \) consisting of \( s \) repetitions of \( 0 \).
		
		We also define the following auxiliary notions
		\begin{itemize}
			\item The sum \( s_1 + \cdots + s_k \) of the indices is called the \emph{weight} of the MPL \( I_{s_1,\ldots,s_k} \).
			\item The total number \( k \) of indices is called the \emph{depth} of the MPL \( I_{s_1,\ldots,s_k} \).
		\end{itemize}
	\end{Def}
	
	\begin{Rem}[Multiple polylogarithm as series]
		More usually, we define the multiple polylogarithm functions \( \Li_{s_1,\ldots,s_k}(z_1,\ldots,z_k) \) by the following infinite series inside the polydisc \( \abs{z_i} < 1 \)
		\[
			\Li_{s_1,\ldots,s_k}(z_1,\ldots,z_k) = \sum_{0 < n_1 < \cdots < n_k} \frac{z_1^{n_1} \cdots z_k^{n_k}}{n_1^{s_1} \cdots n_k^{s_k}} \, .
		\]
		
		For \( \abs{z_i} < 1 \), this series definition is related to the iterated integral definition via the following change of variables
		\[
			\Li_{s_1,\ldots,s_k}(z_1,\ldots,z_k) = (-1)^k I_{s_1,\ldots,s_k}(\tfrac{1}{z_1\cdots z_k}, \tfrac{1}{z_2\cdots z_k}, \ldots, \tfrac{1}{z_k}) \, .
		\]
		A proof of this is given in Theorem 2.2 of \cite{goncharov2001multiple}.  We are therefore justified in using the name `multiple polylogarithm' in both cases.  We tend to prefer using the iterated integral definition for depth \( > 1 \); for depth \( 1 \) we use both \( \Li_n(z) \) and \( I_n(z) \) frequently.
	\end{Rem}
	
	\subsection{\texorpdfstring{The space of multiple polylogarithms \( \Hbl_n(E) \)}{The space of multiple polylogarithms H\_n(E)}}
	\label{sec:mplgroup}
	
	Dan constructs an algebraic space \( \Hbl_n(E) \) describing abstractly the relations between multiple polylogarithms over some field \( E \), modulo products.  Most of Dan's reduction procedure takes place in this algebraic space, once it is seen that certain basic relations for the \emph{generalised hyperlogarithm} (\autoref{def:generalisedhyperlog}) indeed hold in the algebraic space (\autoref{lem:sumtoB}).  The construction of \( \Hbl_n(E) \) is as follows. \medskip

	\subsubsection*{The subset \( E_\ast^{n+2} \) of \( (n+2) \)-tuples}  Write \( E_\ast^{n+2} \) for the following subset of \( (n+2) \)-tules
	\[
	E_\ast^{n+2} \coloneqq \Set{ (a_0, \ldots, a_{n+1}) \mid \text{\( a_0 \neq a_1 \) and \( a_{n} \neq a_{n+1} \)}} \, ,
	\]
	these corresponds to convergent iterated integrals.  We know from \autoref{rem:affine} that the usual iterated integrals 
	\[
	I(a_0 \mid a_1, \ldots, a_n \mid a_{n+1}) \coloneqq \int_{a_0}^{a_{n+1}} \omega(a_1) \circ \cdots \circ \omega(a_n) \, ,
	\]
	are invariant under affine transformations \( a_i \mapsto \alpha a_i + \beta \), \( \alpha \neq 0 \in \C, \beta \in \C \).  We capture this property by considering the quotient
	\[
	E_\ast^{n+2} / (E^\ast \times E)
	\]
	where \( (\alpha, \beta) \in E^\ast \times E \) acts as the affine transformations \( a_i \mapsto \alpha a_i + \beta \).
	
	\subsubsection*{The bialgebra \( \mathcal{A}(E) \)}  Write \( \mathcal{A}_n(E) \) for the \( \Q \)-vector space generated by the symbols
	\[
		[a_0 \mid a_1, \ldots, a_n \mid a_{n+1}]
	\] for \( (a_0, \ldots, a_{n+1}) \in E_\ast^{n+2} / (E^\ast \times E) \).  The graded vector space
	\[
	\mathcal{A}(E) \coloneqq \bigoplus_{n\geq0} \mathcal{A}_n(E)
	\]
	admits a bialgebra structure.
	
	The multiplication \( \cdot \) on \( \mathcal{A}(E) \) comes from the shuffle product \( \sh \) of iterated integrals.  We have 
	\begin{align*}
	& [a_0 \mid a_1, \ldots, a_k \mid a_{k+l+1}] \cdot [a_0 \mid a_{k+1}, \ldots, a_{k+1} \mid a_{k+l+1}] \\
	&= [a_0 \mid a_1\cdots a_k \shuffle a_{k+1}\cdots a_{k+l} \mid a_{k+l+1}] \\
	&= \sum_{\sigma \in S(k,l)} [a_0 \mid a_{\sigma(1)}, \ldots, a_{\sigma(k+k)} \mid a_{k+l+1}] \, .
	\end{align*}
	Here \( S(k,l) \) is the set of \( (k,l) \)-shuffles defined in the shuffle product property of \autoref{sec:chenmpl}, and \( \shuffle \) is the shuffle product of words introduced in \autoref{rem:shuffle}.  In the second line, we extend \( [a_0 \mid \argdash \mid a_{k+l+1}] \) to formal linear combinations of words over the alphabet \( \mathfrak{A} = \Set{a_1, \ldots, a_{k+l}} \). \medskip

	The coproduct \( \Delta \) is given by Goncharov's formula for the coproduct on the Hopf algebra of (motivic) iterated, Theorem 1.2 in \cite{goncharov2005galois}.  In the bialgebra \( \mathcal{A}(E) \), Goncharov's formula reads
	\begin{gather*}
	\Delta([a_0 \mid a_1, \ldots, a_n \mid a_{n+1}]) = {}\\
	\sum_{0 = i_0 < i_1 < \ldots < i_k < i_{k+1} = n+1} [a_0 \mid a_{i_1}, \ldots, a_{i_k} \mid a_{n+1}] \otimes \prod_{p=0}^k [a_{i_p} \mid a_{i_p + 1}, \ldots, a_{i_{p+1}-1} \mid a_{i_{p+1}}] \, .
	\end{gather*}
	
	\begin{Rem}
	This formula has an elegant interpretation in terms of cutting off segments of a semicircular polygon.  For example, the term:
	\[
	[a_0 \mid a_1, a_3, a_6 \mid a_9] \otimes [a_0 \mid a_1] \cdot [a_1 \mid a_2 \mid a_3] \cdot [a_3 \mid a_4, a_5 \mid a_6] \cdot [a_6 \mid a_7, a_8 \mid a_9]
	\]
	in the coproduct \( \Delta [a_0 \mid a_1, \ldots, a_8 \mid a_9] \) corresponds to cutting off the indicated segments from the semicircular polygon below:
	\begin{center}
		\ifpdf
\begingroup%
  \makeatletter%
  \providecommand\color[2][]{%
    \errmessage{(Inkscape) Color is used for the text in Inkscape, but the package 'color.sty' is not loaded}%
    \renewcommand\color[2][]{}%
  }%
  \providecommand\transparent[1]{%
    \errmessage{(Inkscape) Transparency is used (non-zero) for the text in Inkscape, but the package 'transparent.sty' is not loaded}%
    \renewcommand\transparent[1]{}%
  }%
  \providecommand\rotatebox[2]{#2}%
  \ifx\svgwidth\undefined%
    \setlength{\unitlength}{224bp}%
    \ifx\svgscale\undefined%
      \relax%
    \else%
      \setlength{\unitlength}{\unitlength * \real{\svgscale}}%
    \fi%
  \else%
    \setlength{\unitlength}{\svgwidth}%
  \fi%
  \global\let\svgwidth\undefined%
  \global\let\svgscale\undefined%
  \makeatother%
  \begin{picture}(1,0.5)%
    \put(0,0){\includegraphics[width=\unitlength,page=1]{semicircular_polygon.pdf}}%
    \put(0.96428574,0.03571433){\color[rgb]{0,0,0}\makebox(0,0)[b]{\smash{$a_9$}}}%
    \put(0.93628587,0.17665235){\color[rgb]{0,0,0}\makebox(0,0)[b]{\smash{$a_8$}}}%
    \put(0.85566355,0.31629425){\color[rgb]{0,0,0}\makebox(0,0)[b]{\smash{$a_7$}}}%
    \put(0.73214289,0.41994045){\color[rgb]{0,0,0}\makebox(0,0)[b]{\smash{$a_6$}}}%
    \put(0.5806224,0.47508942){\color[rgb]{0,0,0}\makebox(0,0)[b]{\smash{$a_5$}}}%
    \put(0.41937766,0.47508942){\color[rgb]{0,0,0}\makebox(0,0)[b]{\smash{$a_4$}}}%
    \put(0.2678572,0.41994045){\color[rgb]{0,0,0}\makebox(0,0)[b]{\smash{$a_3$}}}%
    \put(0.14433654,0.31629425){\color[rgb]{0,0,0}\makebox(0,0)[b]{\smash{$a_2$}}}%
    \put(0.06371417,0.17665235){\color[rgb]{0,0,0}\makebox(0,0)[b]{\smash{$a_1$}}}%
    \put(0.03571429,0.03571433){\color[rgb]{0,0,0}\makebox(0,0)[b]{\smash{$a_0$}}}%
    \put(0.03101021,-0.03617854){\color[rgb]{0,0,0}\makebox(0,0)[b]{\smash{}}}%
    \put(0,0){\includegraphics[width=\unitlength,page=2]{semicircular_polygon.pdf}}%
  \end{picture}%
\endgroup%

		\else
			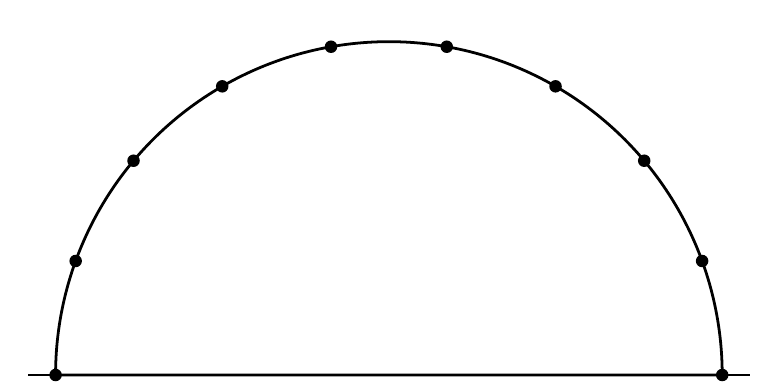
		\fi
	\end{center}
	The left hand term comes from the resulting main polygon above, while the right hand terms come from each individual cut-off segment.
	
	The other terms arise from taking all other possible choices of segments.
	\end{Rem}
	
	\subsubsection*{Lie coalgebra \( \Bl(E) \) of irreducibles}  Consider now the Lie coalgebra \( \Bl(E) \coloneqq \mathcal{A}(E) / (\mathcal{A}_{>0} \cdot \mathcal{A}_{>0}) \) of irreducibles.  This admits a co-derivation 
	\[
	\delta = \bigoplus_n \left( \delta_n \colon \Bl(E) \to \Bl(E) \otimes \Bl(E) \right) \, ,
	\] given by factoring the map \( \pr_\Bl \circ \Delta \) through \( \mathcal{A}_{>0} \cdot \mathcal{A}_{>0} \).
	
	This co-derivation will be used to inductively define the vector space of multiple polylogarithm relations \( \rel_n(E) \subset \Bl_n(E) \).  We will then set \( \Hbl(E) \coloneqq \Bl_n(E) / \rel_n(E) \) to be the space of multiple polylogarithms, and write \( [a_0 \mid a_1, \ldots,a_n \mid a_{n+1}] \) for the image of the same element in \( \Bl(E) \) modulo the relations \( \rel_n(E) \).
	
	\subsubsection*{Space \( \rel_n(E) \) of MPL relations}  The vector space of weight 1 relations is generated by the following elements
	\[
	\rel_1(E) \coloneqq \langle [a \mid z \mid b] + [b \mid z \mid c] = [a \mid z \mid c] : \text{\( z, a, b, c \in E \) with \( z \neq a, b, c \)} \rangle \, .
	\]
	We have
	\[
		I(a \mid z \mid b) = \log\left(\frac{z - b}{z - a}\right) \, ,
	\] so the generating of \( \rel_1(E) \) is equivalent to
	\[
		\log\left(\frac{z - b}{z - a}\right) + \log\left(\frac{z - c}{z-b}\right) = \log\left( \frac{z-b}{z-a} \cdot \frac{z - c}{z-b} \right) = \log\left(\frac{z - c}{z - b}\right) \, .
	\]
	This captures the fundamental functional equation of the logarithm function \( \log(x) \). \medskip
	
	Now write \( \mathcal{K}_n(E) \) for the kernel of the map
	\[
	(\pr_\Hbl \otimes \pr_\Hbl) \circ \delta_n \colon \Bl_n(E) \to (\Hbl(E) \otimes \Hbl(E))_n \, ,
	\]
	where \( \pr_\Hbl \colon \Bl_k(E) \to \Hbl_k(E) \) is already defined for \( k < n \).  We use this to define the full space of weight \( n \) MPL relations.
	
	\begin{Def}[Space of relations \( \rel_n(E) \)]
		The space of multiple polylogarithm relations is generated by the following elements
		\[
		\rel_n(E) \coloneqq \Set{ \alpha(1) - \alpha(0) \mid \alpha \in \mathcal{K}_n(E(t)) }
		\]
	\end{Def}
	
	The motivation for this seemingly abstruse definition of MPL relations comes from a result of Zagier \cite[Proposition 1 in][]{zagier1991polylogarithms}; an element \( \alpha(t) \) is in \( \mathcal{K}_n(E(t)) \) if and only if the map \( t \mapsto \mathcal{L}_n(\alpha(t)) \) is constant.  Here \( \mathcal{L}_n \) is certain modified version of the polylogarithm \( \Li_n \).  In our setting of \emph{multiple} polylogarithms, this means that the elements \( \alpha(1) - \alpha(0) \) are specialisations to \( E \) of all MPL functional equations instead.
	
	\subsubsection*{The space of \( \Hbl_n(E) \) of MPL's} With the above space of relations defined inductively, we also obtain at each step the space
	\[
		\Hbl_n(E) \coloneqq \Bl_n(E) / \rel_n(E) \, .
	\]
	This should be thought of as the space of weight \( n \) multiple polylogarithms, taken modulo products.
	
	The map \( (\pr_\Hbl \otimes \pr_\Hbl) \circ \delta_n \) factors through \( \rel_n(E) \), to give a map
	\[
	\delta_n \colon \Hbl_n(E) \to (\Hbl(E) \otimes \Hbl(E))_n \, .
	\]
	This gives \( \Hbl(E) \) the structure of a graded Lie coalgebra.

	\subsection{The symbol of multiple polylogarithms}
	
	The \emph{symbol} of multiple polylogarithms is an algebraic invariant in \( \Q(x_1,\ldots,x_m)^{\otimes n} \) attached to an iterated integral, which captures the differential properties of the integral.  The symbol was first defined by Goncharov, in Section 4 of \cite{goncharov2005galois}, under the name \( \otimes^m \)-invariant.

	The symbol attached to a weight \( n \) iterated integral, where the integrand is written as a \emph{total derivative} in the following form
	\[
		F = \int \dd \log(w_1) \circ \dd \log(w_2) \circ \cdots \circ \dd \log(w_n)
	\]
	is
	\[
		\mathcal{S}(F) \coloneqq w_1 \otimes w_2 \otimes \cdots \otimes w_n \in \Q(x_1,\ldots,x_m)^{\otimes n} \, .
	\]
	Here \( x_1, \ldots, x_m \) are the variables appearing in \( w_1, \ldots, w_n \).  More generally, the symbol of an arbitrary iterated integral can be computed by repeated differentiation to put it into the above form.
	
	One can also use the original trivalent tree definition of the \( \otimes^m \)-invariant as given in Section 4.4 of \cite{goncharov2005galois}.  In Proposition 4.5 of \cite{goncharov2005galois}, Goncharov shows that the \( \otimes^m \)-invariant can also be computed by maximally iterating the coproduct on his Hopf algebra of iterated integrals.  \citeauthor{duhr2012polygons} give another method to compute the symbol, by working in the algebra of \( R \)-deco polygons from \cite{gangl2009multiple}.  Duhr has implemented this method as part of the PolylogTools package \cite{PolylogTools} for Mathematica \cite{Mathematica}. \medskip
	
	The symbol should \emph{vanish} on any genuine multiple polylogarithm identity.  However, the symbol only captures the `top-slice' of such identities since it cannot see terms of the form \( \text{constant} \times \text{lower weight} \).  To access these, one may use further slices of the iterated integral coproduct.
	
	On the other hand, it is often useful to work at an even coarser level, by ignoring products, or ignoring products and depth 1 terms in the symbol.  This allows one to capture the `leading' terms in identities, and build up more precise identities from there.
	
	\subsubsection*{Symbol modulo products} In Section 5.4 of \cite{duhr2012polygons}, the authors describe a family of operators \( \Pi_w \) on length \( w \) tensors, where \( \Pi_1 = \id \), and
	\begin{align*}
	\Pi_w(a_1 \otimes \cdots \otimes a_w) \coloneqq \frac{w-1}{w} \left(
	\Pi_{w-1} (a_1 \otimes \cdots a_{w-1}) \otimes a_w - 
	\Pi_{w-1} (a_2 \otimes \cdots a_w) \otimes a_1
	\right) \, .
	\end{align*}
	These operators kill shuffle products.  Moreover, Proposition 1 of \cite{duhr2012polygons} establishes that the kernel of \( \Pi_w \) is exactly the ideal generated by all shuffle products.
	
	By applying \( \Pi_w \) we compute the symbol, modulo products.  When discussing identities which hold on this level, we will write \( {} \modsh {} \).  This is implemented in the PolylogTools \cite{PolylogTools} package as \texttt{sh}.
					
	\subsubsection*{Symbol modulo \(\delta\)} The coboundary map \( \delta \) from \( \Hbl_n(E) \) in \autoref{sec:mplgroup} has an analogue on the symbol, which kills the depth 1 terms \( \Li_n \) (and Nielsen polylogarithms, which conjectural should be expressible in terms of \( \Li_n \), but is currently unclear).  
	
	In the weight 4 case, the coboundary map \( \delta \) on symbols is described as an 8-fold symmetrisation in \cite{goncharov2010classical} sending \( a \otimes b \otimes c \otimes d \mapsto (a \wedge b) \wedge (c \wedge d) \).  Here it is used as an important tool in simplifying the 17 page weight 4 \emph{multiple polylogarithm} expression for the `two-loop Hexagon Wilson loop' \( R^{(2)}_{6,WL} \) in appendix H of \cite{duca2010two}.  The simplified expression consists of a \emph{single line} of classical \( \Li_4 \) polylogarithms.
	
	The coboundary map on symbols can be extended to higher weight.  Duhr implements it in the PolylogTools package \cite{PolylogTools} as \texttt{del}.  When discussing identities which hold on this level, modulo products and \( \Li_n \) terms, we will write \( {} \moddel {} \).

	\section{Dan's reduction method}\label{sec:danmethod}
	
	We are now in a position to discuss the reduction method itself.  The goal of this section is to provide a full account of the reduction method in the second paper \cite{dan2011surla2}, and to furnish any necessary explanations and proofs for all of the steps in the method.
	
	\subsubsection*{Supporting calculations} Dan's reduction procedure is implemented in \mathematicanb{dan\_procedure.m}.  This is in turn used by the worksheets \mathematicanb{wt4\_dan.nb} and \mathematicanb{wt5\_dan.nb} to produce the correct weight 4 reduction in \autoref{sec:dan4} and the new weight 5 reduction in \autoref{sec:dan5}.
	
	\subsection{Definition of Dan's generalised hyperlogarithm}\label{sec:methodstart}
	
	Before defining Dan's generalised hyperlogarithm, we introduce a generalisation of our `standard' differential form \( \omega(a_i) \).
	
	\begin{Def}
		The unique differential form \( \omega(a_i, x) \) of degree 1, holomorphic on \( \Proj^1(\C) \setminus \Set{ a_i, x } \) which is 0 if \( a_i = x \), and otherwise has a pole of order 1 and residue +1 at \( a_i \) and a pole of order 1 and residue \( -1 \) at \( x \) is
		\[
		\omega(a_i, x) \coloneqq \frac{(a_i - x)}{(t - a_i)(t - x)} \, \dd t \, .
		\]
		
		The correct differential form to take when \( x = \infty \) is
		\[
		\omega(a_i, \infty) \coloneqq \frac{\dd t}{t - a_i} \, ,
		\]
		since this agrees with \( \omega(\tfrac{1}{a_i}, 0)(s) \) under the change of variables \( s = \tfrac{1}{t} \), sending \( \infty \mapsto 0 \) and \( a_i \mapsto \tfrac{1}{a_i} \).
	\end{Def}
	
	We can then use the family of differential forms \( \omega(a_i, x) \) to define Dan's generalised hyperlogarithm function.
	
	\begin{Def}[Generalised hyperlogarithm]\label{def:generalisedhyperlog}
		Let \( a_0, \ldots, a_{n+1}, x \in \Proj^1(\C) \), such that \( a_0 \neq a_1 \), \( a_0 \neq x \), \( a_n \neq a_{n+1} \) and \( x \neq a_{n+1} \).  Then the \emph{generalised hyperlogarithm} \( H(a_0 \mid a_1, \ldots, a_n \dblslash x \mid a_{n+1}) \) is defined by the following iterated integral
		\[
		H(a_0 \mid a_1, \ldots, a_n \dblslash x \mid a_{n+1}) \coloneqq \int_{a_0}^{a_{n+1}} \omega(a_1, x) \circ \omega(a_2, x) \circ \cdots \circ \omega(a_n, x) \, ,
		\]
		for some path \( \gamma \) from \( a_0 \) to \( a_{n+1} \) in \( \Proj^1(\C) \setminus \Set{a_1, \ldots, a_n, x} \).  The requirements \( a_0 \neq a_1 \), \( a_0 \neq x \), \( a_n \neq a_{n+1} \) and \( x \neq a_{n+1} \) ensure that the resulting integral is convergent.
	\end{Def}
	
	The relationship between Dan's generalised hyperlogarithm, and the ordinary hyperlogarithm is straightforward.
	
	\begin{Prop}\label{prop:hasi}
		If \( x = \infty \) then
		\[
		H(a_0 \mid a_1, \ldots, a_n \dblslash \infty \mid a_{n+1}) = I(a_0 \mid a_1, \ldots, a_n \mid a_{n+1}) \, .
		\]
		Otherwise
		\[
		H(a_0 \mid a_1, \ldots, a_n \dblslash x \mid a_{n+1}) = I(\tfrac{1}{a_0 - x} \mid \tfrac{1}{a_1 - x}, \ldots, \tfrac{1}{a_n - x} \mid \tfrac{1}{a_{n+1} - x}) \, .
		\]
		
		\begin{proof}
			If \( x = \infty \), then the differential form \( \omega(a_i, x) \) reduces to the `standard' form \( \omega(a_i) = \frac{\dd t}{t - a_i} \) appearing in the definition of the hyperlogarithm, so the result holds.
			
			Otherwise, change variables via \( t' = 1/(t - x) \), which sends \( x \mapsto \infty \), and \( a_i \mapsto 1/(a_i - x) \).  We have that \( t = \frac{1}{t'} + x \), so that
			\begin{align*}
			\omega(a_i, x)(t) &= \frac{(a_i - x)}{(t-a_i)(t-x)} \, \dd t \\
			&= \frac{a_i - x}{(\frac{1}{t'} + x - a_i)(\frac{1}{t'} + x - x)} \frac{-1}{t'^2} \, \dd t' \\
			&= \frac{1}{t' - \frac{1}{a_i-x}} \, \dd t' \\
			&= \omega(\tfrac{1}{a_i-x}, \infty)(t') \, .
			\end{align*}
			The bounds \( a_0 \) and \( a_{n+1} \) change to \( \tfrac{1}{a_0 - x} \) and \( \tfrac{1}{a_{n+1} - x} \) respectively, so we obtain the required result.
		\end{proof}
	\end{Prop}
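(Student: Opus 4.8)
The plan is to split into the two cases of the statement and dispatch the finite-$x$ case using the functoriality of iterated integrals recalled in \autoref{sec:chenmpl}.

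The case $x = \infty$ is immediate from the definitions: the differential form $\omega(a_i,\infty)$ was set to equal $\frac{\dd t}{t-a_i} = \omega(a_i)$, so the iterated integral defining $H(a_0 \mid a_1,\ldots,a_n \dblslash \infty \mid a_{n+1})$ has exactly the integrand, and the same bounds, as the one defining $I(a_0 \mid a_1,\ldots,a_n \mid a_{n+1})$; the two agree term by term.

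For finite $x$, the idea is to push the integral forward along the Möbius map
\[
\phi \colon \Proj^1(\C) \to \Proj^1(\C), \qquad t \longmapsto \frac{1}{t-x} \, ,
\]
which sends $x \mapsto \infty$ and $a_i \mapsto \tfrac{1}{a_i-x}$ for every $i$ (in particular for $i = 0$ and $i = n+1$). The crux is the pointwise identity of $1$-forms $\phi^\ast \omega(\tfrac{1}{a_i-x}) = \omega(a_i, x)$, which I would verify by the substitution $t = \tfrac{1}{t'} + x$, so that $\dd t = -\tfrac{1}{t'^2}\,\dd t'$: inserting this into $\omega(a_i,x)(t) = \frac{(a_i-x)\,\dd t}{(t-a_i)(t-x)}$ and cancelling the powers of $t'$ leaves precisely $\frac{\dd t'}{t' - \frac{1}{a_i-x}} = \omega(\tfrac{1}{a_i-x},\infty)(t')$. (In the degenerate interior case $a_i = x$ both sides are simply $0$, so the identity persists.) With this identity in hand, applying functoriality to $\phi$ and a path $\gamma$ from $a_0$ to $a_{n+1}$ gives
\[
H(a_0 \mid a_1, \ldots, a_n \dblslash x \mid a_{n+1}) = \int_{\phi(\gamma)} \omega(\tfrac{1}{a_1-x})\circ\cdots\circ\omega(\tfrac{1}{a_n-x}) \, ,
\]
and since $\phi(\gamma)$ runs from $\phi(a_0) = \tfrac{1}{a_0-x}$ to $\phi(a_{n+1}) = \tfrac{1}{a_{n+1}-x}$ inside $\Proj^1(\C)$ with the transformed singular set removed, the right-hand side is by definition $I(\tfrac{1}{a_0-x} \mid \tfrac{1}{a_1-x}, \ldots, \tfrac{1}{a_n-x} \mid \tfrac{1}{a_{n+1}-x})$.

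I do not expect a real obstacle: the only slightly fiddly point is the $1$-form computation above, and one should also note that the convergence hypotheses $a_0\neq a_1$, $a_0\neq x$, $a_n\neq a_{n+1}$, $x\neq a_{n+1}$ pass over to the corresponding non-degeneracy conditions $\tfrac{1}{a_0-x}\neq\tfrac{1}{a_1-x}$, etc., for the target hyperlogarithm, which is automatic from the injectivity of $\phi$.
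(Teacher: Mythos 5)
Your proposal is correct and follows essentially the same route as the paper: the same substitution $t' = 1/(t-x)$ and the same one-form computation $\omega(a_i,x)(t) = \omega(\tfrac{1}{a_i-x},\infty)(t')$, with the paper leaving the functoriality of iterated integrals implicit where you invoke it explicitly. Your extra remarks on the degenerate case $a_i = x$ and the transfer of the convergence hypotheses are fine but not needed beyond what the paper records.
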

	
	We can use the above relation to the usual hyperlogarithm, to give meaning to the symbol \( [a_0 \mid a_1, \ldots, a_n \dblslash x \mid a_{n+1}] \) in the space \( \Hbl_n(E) \) of multiple polylogarithms on \(E\) , as follows.
	
	\begin{Def}\label{def:hasi}
		We set
		\[
		[a_0 \mid a_1, \ldots, a_n \dblslash \infty \mid a_{n+1}] \coloneqq [a_0 \mid a_1, \ldots, a_n \mid a_{n+1}] \, ,
		\] and for \( x \neq \infty \),
		\[
		[a_0 \mid a_1, \ldots, a_n \dblslash x \mid a_{n+1}] \coloneqq [\tfrac{1}{a_0 - x} \mid \tfrac{1}{a_1 - x}, \ldots, \tfrac{1}{a_n - x} \mid \tfrac{1}{a_{n+1} - x} ] \, .
		\]
	\end{Def}
	
	This completes the first step of the method, as outlined in \autoref{sec:overview}.  We now want to start the second step of the method, but we first must study some relations which hold on the level of iterated integrals, before moving to \( \Hbl_n(E) \). \medskip
	
	Dan observes the following `decomposition' of the differential forms \( \omega(a_i, x) \), and uses this to give a relation  expressing \( H(a_0 \mid a_1, \ldots, a_n \dblslash x \mid a_{n+1}) \) in terms of expressions of the forms \( H(a_0 \mid \argdash \dblslash y \mid a_{n+1}) \).  This relation is later exploited as a key tool in the reduction procedure.
	
	\begin{Obs}\label{obs:difference_of_omegas}
		By a straightforward calculation, we can write
		\[
		\omega(a_i, x) = \omega(a_i, y) - \omega(x, y) \, .
		\]
	\end{Obs}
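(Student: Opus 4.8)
The cleanest route is to reduce both sides to the `standard' forms \( \omega(\argdash) \) by partial fractions. First I would compute, for any two finite points \( b, c \) with \( b \neq c \),
\[
\omega(b, c) = \frac{(b-c)}{(t-b)(t-c)} \, \dd t = \frac{\dd t}{t-b} - \frac{\dd t}{t-c} = \omega(b,\infty) - \omega(c,\infty) \, ,
\]
which is the standard partial fraction decomposition \( \tfrac{b-c}{(t-b)(t-c)} = \tfrac{1}{t-b} - \tfrac{1}{t-c} \) (checked by clearing denominators and evaluating at \( t = b \) and \( t = c \)). Applying this three times gives
\[
\omega(a_i, y) - \omega(x, y) = \bigl( \omega(a_i, \infty) - \omega(y, \infty) \bigr) - \bigl( \omega(x, \infty) - \omega(y, \infty) \bigr) = \omega(a_i, \infty) - \omega(x, \infty) = \omega(a_i, x) \, ,
\]
the \( \omega(y,\infty) \) terms cancelling. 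This is the whole argument.

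Alternatively — and this is what I would actually write out if I want to avoid invoking \( \infty \) at all — I would combine the right-hand side over a common denominator directly:
\[
\omega(a_i, y) - \omega(x, y) = \frac{1}{t - y}\left( \frac{a_i - y}{t - a_i} - \frac{x - y}{t - x} \right) \dd t = \frac{(a_i - y)(t - x) - (x - y)(t - a_i)}{(t-y)(t-a_i)(t-x)} \, \dd t \, ,
\]
and then observe that the numerator expands to \( (a_i - x)(t - y) \), so the factor \( (t-y) \) cancels and one is left with \( \omega(a_i, x) \).

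I expect essentially no obstacle: this is a routine algebraic identity. The only point requiring a word of care is the degenerate cases where one of \( a_i, x, y \) is \( \infty \), or where two of them coincide (so that some \( \omega \) is the zero form); there one falls back on the convention \( \omega(a_i,\infty) = \tfrac{\dd t}{t-a_i} \) from the preceding definition and checks the three sub-cases by hand, each of which collapses immediately. I would relegate these to a parenthetical remark rather than belabour them.
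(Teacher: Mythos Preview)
Your proposal is correct; the paper itself offers no proof beyond the phrase ``by a straightforward calculation,'' so your partial-fraction (or common-denominator) argument is precisely the kind of verification the paper leaves to the reader.
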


	Before stating this relation, it is convenient to introduce some notation about replacing variables in certain arguments of the generalised hyperlogarithm.  Some related notation, in the form of the \( A \) operator in \autoref{def:operatorA}, will be used in the space \( \Hbl_(E) \) later.
	
	\begin{Def}[\( X \) operator]
		Let \( I \) be a subset of \( \Set{1, 2, \ldots, n} \).  Define
		\[
		X(H(a_0 \mid a_1, \ldots, a_n \dblslash x \mid a_{n+1}), I, y)
		\] to be the hyperlogarithm
		\[
		H(a_0 \mid a_1, \ldots, a_n \dblslash x \mid a_{n+1})
		\] where the positions \( j \in I \) are replaced by the variable \( y \).
	\end{Def}
	
	\begin{Eg}
		For example, we have
		\begin{align*}
		& X(H(a_0 \mid a_1,a_2,a_3,a_4 \dblslash x \mid a_5), \Set{1,3}, y) = H(a_0 \mid y, a_2, y, a_4 \dblslash x \mid a_5) \\
			& X(H(a_0 \mid a_1,a_2,a_3,a_4,a_5,a_6 \dblslash x \mid a_7), \Set{3,4,5,6}, z) = H(a_0 \mid a_1, a_2, y,y,y,y, \dblslash x \mid a_7)
		\end{align*}
	\end{Eg}
	
	\begin{Prop}\label{prop:Hx_as_alternating_sum_of_Hy}
		The hyperlogarithm \( H(a_0 \mid a_1, \ldots, a_n \dblslash x \mid a_{n+1}) \) can be expressed as an alternating sum of hyperlogarithms of the form \( H(a_0 \mid \argdash \dblslash y \mid a_{n+1}) \).  More precisely, we have
		\[
		H(a_0 \mid a_1, \ldots, a_n \dblslash x \mid a_{n+1}) = \sum_{I} (-1)^\size{I} X(H(a_0 \mid a_1, \ldots, a_n  \dblslash y \mid a_{n+1}), I, x) \, ,
		\] where the sum is taken over all \( I \subset \Set{1, 2, \ldots, n} \).
		
		\begin{proof}
			We can prove this by induction on the depth \( n \).  In the case \( n = 1 \), we explicitly write out both sides.  On the left hand side we have \( H(a_0 \mid a_1 \dblslash x \mid a_2) \), and on the right hand side we have
			\[
			\sum_{I} (-1)^\size{I} X(H(a_0 \mid a_1 \dblslash y \mid a_2), I, x) \, ,
			\] taken over all \( I  \subset \Set{1} \).  That is, over \( I = \emptyset, \Set{ 1 } \).  This gives
			\[
			H(a_0 \mid a_1 \dblslash y \mid a_2)-H(a_0 \mid x \dblslash y \mid a_2) \, ,
			\] which is equal to
			\begin{align*}
			\int_{a_0}^{a_2} \omega(a_1, y) - \omega(x,y) &= \int_{a_0}^{a_2} \omega(a_1, x) \\
			&= H(a_0 \mid a_1 \dblslash x \mid a_2) \, ,
			\end{align*} using \autoref{obs:difference_of_omegas}.  Hence the case \( n = 1 \) holds. \medskip
			
			Suppose now that the result holds for depth \( n-1 \).  Then for depth \( n \) we have the following.  We can sum over \( I \subset \Set{1, \ldots, n} \) by first taking \( I \) with \( n \in I \), and then taking \( I \) with \( n \notin I \).  So
			\begin{align*}
			&\sum_{I} (-1)^\size{I} X(H(a_0 \mid a_1, \ldots, a_n \dblslash y \mid a_{n+1}), I, x) = \\
			&\hspace{2em} \sum_{\substack{\text{\( I \) with} \\ n \in I}} (-1)^\size{I} X(H(a_0 \mid a_1, \ldots, a_n \dblslash y \mid a_{n+1}), I, x) + \\ & \hspace{2em} {} + \sum_{\substack{\text{\( I \) with } \\ n \notin I}} (-1)^\size{I} X(H(a_0 \mid a_1, \ldots, a_n \dblslash y \mid a_{n+1}), I, x) \, .
			\end{align*}
			In the first sum we know \( n \in I \), so we can remove \( n \) from \( I \), replace \( a_n \) with \( x \) and insert one minus sign already.  Then the sum is over \( I' \subset \Set{1, \ldots, n-1} \). In the second sum, \( 1 \notin I \), so the sum is over \( I' \subset \Set{1, \ldots, n-1} \) already, giving
			\begin{align*}
			&= {} - \sum_{I'} (-1)^\size{I'} X(H(a_0 \mid a_1, \ldots, a_{n-1}, x \dblslash y \mid a_{n+1}), I', x) + \\ & \hspace{2em} {} + \sum_{I'} (-1)^\size{I'} X(H(a_0 \mid a_1, \ldots, a_{n-1}, a_n \dblslash y \mid a_{n+1}), I', x) \, .
			\end{align*}
			
			Now recall from \autoref{rem:iterated} that iterated integrals do deserve the name iterated, meaning the iterated integral \( H(a_0 \mid a_1, \ldots, a_n \dblslash x \mid a_{n+1}) \) can be expanded as follows
			\begin{align*}
			H(a_0 \mid a_1, \ldots, a_n \dblslash x \mid a_{n+1}) &= \int_{t=a_0}^{a_{n+1}} H(a_0 \mid a_1, \ldots, a_{n-1} \dblslash x \mid t) \cdot \omega(a_n, x)(t) \, .
			\end{align*}
			If we do this with the integrals in the sum above, we obtain
			\begin{align*}
			&= \int_{t=a_0}^{a_{n+1}}\sum_{I'} (-1)^\size{I'} X(H(a_0 \mid a_1, \ldots, a_{n-1} \dblslash y \mid t), I', x) \cdot \omega(x,y)(t) + {} \\ 
			& \hspace{6em} {} + \sum_{I'} (-1)^\size{I'} X(H(a_0 \mid a_1, \ldots, a_{n-1} \dblslash y \mid t), I', x) \cdot \omega(a_n, y)(t)
			\end{align*}
			Using the induction assumption, this can be written as
			\begin{align*}
			& = \int_{t=a_0}^{a_{n+1}} H(a_0 \mid a_1, \ldots, a_{n-1} \dblslash x \mid t) \cdot -\omega(x,y)(t) + {} \\
			& \hspace{6em} {} + H(a_0 \mid a_1, \ldots, a_{n-1} \dblslash x \mid t) \cdot \omega(a_n,y)(t) \\
			&= \int_{t=a_0}^{a_{n+1}} H(a_0 \mid a_1, \ldots, a_{n-1} \dblslash x \mid t) \cdot (\omega(a_n,y)(t) - \omega(x,y)(t))
			\end{align*}
			Now apply \autoref{obs:difference_of_omegas} to rewrite the difference of \( \omega \)'s, and use the \autoref{rem:iterated} to evaluate the result as an iterated integral, and we obtain
		\begin{align*}
			&= \int_{t=a_0}^{a_{n+1}} H(a_0 \mid a_1, \ldots, a_{n-1} \dblslash x \mid t) \cdot  \omega(a_n, x)(t) \\ &= H(a_0 \mid a_1, a_2, \ldots, a_n, \dblslash x \mid a_{n+1}) \, .
		\end{align*}
		This completes the proof.
		\end{proof}
	\end{Prop}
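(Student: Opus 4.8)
The plan is to exploit the decomposition $\omega(a_i, x) = \omega(a_i, y) - \omega(x, y)$ of \autoref{obs:difference_of_omegas} directly inside the defining iterated integral, rather than inducting on the depth. First I would fix a path $\gamma$ from $a_0$ to $a_{n+1}$ in $\Proj^1(\C)$ avoiding the finite set $\Set{a_1, \ldots, a_n, x, y}$, so that every hyperlogarithm appearing on either side of the claimed identity may be computed as an iterated integral along this single $\gamma$. Substituting the decomposition into each of the $n$ slots of
\[
H(a_0 \mid a_1, \ldots, a_n \dblslash x \mid a_{n+1}) = \int_\gamma \omega(a_1, x) \circ \cdots \circ \omega(a_n, x)
\]
and invoking the multilinearity of the iterated integral in its entries, I would expand the right-hand side into $2^n$ iterated integrals — one for each choice, slot by slot, of either $\omega(a_i, y)$ or $-\omega(x, y)$.

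Next I would organise these $2^n$ terms according to the subset $I \subseteq \Set{1, \ldots, n}$ of slots in which $-\omega(x, y)$ was picked. The term indexed by $I$ carries the overall sign $(-1)^{\size{I}}$ and has integrand $\omega(b_1, y) \circ \cdots \circ \omega(b_n, y)$, where $b_i = x$ when $i \in I$ and $b_i = a_i$ otherwise. Since $\omega(x, y)$ is precisely $\omega(a_i, y)$ with the argument $a_i$ replaced by $x$, integrating this term along $\gamma$ gives exactly $X(H(a_0 \mid a_1, \ldots, a_n \dblslash y \mid a_{n+1}), I, x)$, and summing over all $I$ produces the stated formula. Along the way one checks that each term is convergent: the hypotheses $a_0 \neq x$ and $x \neq a_{n+1}$ cover the cases $1 \in I$ and $n \in I$, while $y$ may be taken generic so that $a_0 \neq y$ and $y \neq a_{n+1}$.

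I do not anticipate a genuine obstacle: the content is the decomposition of \autoref{obs:difference_of_omegas} plus multilinearity, and the only care needed is the bookkeeping of signs and the matching of each expanded term with the correct $X(\argdash, I, x)$, together with the routine observation that all the integrals may be taken along one common path. If one prefers not to manipulate $2^n$ terms at once, the same identity can instead be proved by induction on $n$ via the recursive description of iterated integrals in \autoref{rem:iterated}: split the sum over $I$ according to whether $n \in I$, peel the last form $\omega(a_n, x) = \omega(a_n, y) - \omega(x, y)$ off the end of the integral, and apply the inductive hypothesis to the two resulting depth $n-1$ integrals; the base case $n = 1$ is immediate from \autoref{obs:difference_of_omegas}.
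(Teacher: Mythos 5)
Your primary argument is correct, and it is a genuinely different (and arguably slicker) route than the paper takes. You expand $\omega(a_i,x) = \omega(a_i,y) - \omega(x,y)$ simultaneously in all $n$ slots and invoke multilinearity of the iterated integral $\int_\gamma \eta_1 \circ \cdots \circ \eta_n$ in each entry, so the left-hand side splits into $2^n$ terms indexed by the set $I$ of slots where $-\omega(x,y)$ was chosen, each carrying the sign $(-1)^{\size{I}}$ and matching $X(H(a_0 \mid a_1, \ldots, a_n \dblslash y \mid a_{n+1}), I, x)$ on the nose; this is a one-step proof whose only care points are the ones you flag, namely a common path and convergence of each individual term (which needs $y$ generic, i.e.\ $y \neq a_0, a_{n+1}$, exactly as you say — a point the paper's own proof leaves implicit as well). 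The paper instead proves the identity by induction on the depth $n$: it splits the sum over $I$ according to whether $n \in I$, peels off the last form via the recursive description of iterated integrals in \autoref{rem:iterated}, applies the inductive hypothesis to the two depth-$(n-1)$ sums, and recombines using \autoref{obs:difference_of_omegas} — which is precisely the fallback argument you sketch in your closing sentences. The trade-off is the expected one: your direct expansion makes the combinatorial structure of the formula transparent but requires handling all $2^n$ terms and an explicit appeal to multilinearity, while the paper's induction only ever manipulates two sums at a time at the cost of some notational bookkeeping with the truncated upper bound $t$.
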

	
	\subsection{\texorpdfstring{Operators \( A \) and \( B \), reducing to \( \leq n - 2 \) variables}{Operators A and B, reducing to <= n-2 variables}}
	
	We start now with the second step proper of the method, as outlined in \autoref{sec:overview}.
	
	We will exploit the identity \autoref{prop:Hx_as_alternating_sum_of_Hy} on hyperlogarithms to obtain certain useful relations in the space \( \Hbl_n(E) \).  To do this, we introduce the operators \( A \) and \( B \) which will give us tools to systematically reduce the hyperlogarithms.  The payoff comes with \autoref{obs:nminus1vars} where we initially see a reduction to \( n - 1 \) variables using \( B \), and then in \autoref{prop:switchx} where we get a reduction to \( \leq n - 2 \) variables after some further work.
	
	\begin{Def}[\( A \) operator]\label{def:operatorA}
		Let \( 1 \leq i \leq n \) and let \( I \) be a subset of \( \Set{1, 2, \ldots, n} \) containing \( i \).  Define
		\[
		A([a_0 \mid a_1, \ldots, a_n \dblslash x \mid A_{n+1}], i, I)
		\] to be the symbol
		\[
		[a_0 \mid a_1, \ldots, a_n \dblslash x \mid a_{n+1}]
		\] where the positions \( j \in I \) are replaced by the variable \( a_i \) from position \( i \).
	\end{Def}
	
	\begin{Eg}
		For example, we have
		\begin{align*}
		& A([a_0 \mid a_1, a_2, a_3, a_4 \dblslash x \mid a_5], \Set{1, 3}, 3) = [a_0 \mid a_3, a_2, a_3, a_4 \dblslash x \mid a_5] \\
		& A([a_0 \mid a_1, a_2, a_3, a_4, a_5, a_6 \dblslash x \mid a_7], \Set{3, 4, 5, 6}, 4) = [a_0 \mid a_1, a_2, a_4, a_4, a_4, a_4 \dblslash x \mid a_7]
		\end{align*}
	\end{Eg}
	
	\begin{Def}[\( B \) operator]\label{def:operatorB}
		Now define
		\[
		B([a_0 \mid a_1, \ldots, a_n \dblslash x \mid a_{n+1}], i) \coloneqq \sum_{I} (-1)^\size{I} A([a_0, \mid a_1, \ldots, a_n \dblslash x \mid a_{n+1}], i, I) \, ,
		\]
		where the sum is taken over all subsets \( I \) of the set \( \Set{1,2,\ldots,n} \) containing \( i \) and having cardinality \( \size{I} \geq 2 \).
	\end{Def}
	
	\begin{Eg}
		In computing \( B \) with \( i = 2 \) and \( n = 3 \), we would have to sum over the sets \( \Set{1,2}, \Set{2,3}, \Set{1,2,3} \).  So we get
		\begin{align*}
		B([a_0 \mid a_1, a_2, a_3 \dblslash x \mid a_4], 2) &= \begin{aligned}[t]
		&(-1)^{\size{\Set{1,2}}} A([a_0 \mid a_1, a_2, a_3 \dblslash x \mid a_4], 2, \Set{1,2}) \\
		& + (-1)^{\size{\Set{2,3}}} A([a_0 \mid a_1, a_2, a_3 \dblslash x \mid a_4], 2, \Set{2,3}) \\
		& + (-1)^{\size{\Set{1,2,3}}} A([a_0 \mid a_1, a_2, a_3 \dblslash x \mid a_4], 2, \Set{1,2,3})
		\end{aligned}\\[1ex]
		&= \begin{aligned}[t]
		&[a_0 \mid a_2, a_2, a_3 \dblslash x \mid a_4] \\
		& + [a_0 \mid a_1, a_2, a_2 \dblslash x \mid a_4] \\
		& - [a_0 \mid a_2, a_2, a_2 \dblslash x \mid a_4] \, .
		\end{aligned}
		\end{align*}
	\end{Eg}
	
	According to Dan, the considerations from \autoref{prop:Hx_as_alternating_sum_of_Hy}, applied when \( y = a_i \), suggest a relation in \( \Hbl_n(E) \).  Indeed, setting \( y = a_i \) in \autoref{prop:Hx_as_alternating_sum_of_Hy} gives
	\[
	H(a_0 \mid a_1, \ldots, a_n \dblslash x \mid a_{n+1}) = \sum_{I} (-1)^\size{I} X(H(a_0 \mid a_1, \ldots, a_n \dblslash a_i \mid a_{n+1}), I, x) \, .
	\]
	Notice that whenever \( i \notin I \), so that \( a_i \) is not replaced by \( x \), we obtain an integral like \( H(a_0 \mid \ldots, a_i, \ldots \dblslash a_i \mid a_{n+1}) \) which contains the differential form \( \omega(a_i, a_i) = 0 \).  The resulting integral is therefore 0, and does not contribute to the total.  It makes sense, then, to reduce the sum to \( I \subset \Set{1, 2, \ldots, n} \), such that \( i \in I \).  Moreover, there is only one possible \( I' \) with \( \size{I'} = 1 \), so we can deal with term separately.  We obtain
	\begin{align*} H(a_0 \mid a_1, \ldots, a_n \dblslash x \mid a_{n+1}) & = - H(a_0 \mid a_1, \ldots, a_{i-1}, x, a_i, \ldots, a_n \dblslash a_i \mid a_{n+1}) + {} \\
	& \hspace{2em} {} + \sum_{I'} (-1)^\size{I'}  X(H(a_0 \mid a_1, \ldots, a_n \dblslash a_i \mid a_{n+1}), I', x) \, ,
	\end{align*}
	where the sum is taken over all \( I' \subset \Set{1, 2, \ldots,n} \) such that \( i \in I \) and \( \size{I} \geq 2 \).
	
	Rearranging this gives
	\begin{align*}
	& H(a_0 \mid a_1, \ldots, a_n \dblslash x \mid a_{n+1}) +  H(a_0 \mid a_1, \ldots, a_{i-1}, x, a_i, \ldots, a_n \dblslash a_i \mid a_{n+1}) \\
	& \hspace{5em} = \sum_{I'} (-1)^\size{I'} X(H(a_0 \mid a_1, \ldots, a_n \dblslash a_i \mid a_{n+1}), I', x) \\
	& \hspace{5em} = \sum_{I'} (-1)^\size{I'} X(H(a_0 \mid a_1, \ldots, a_n \dblslash x \mid a_{n+1}), I', a_i) \, .
	\end{align*}
	The last equality comes from the symmetry under \( a_i \leftrightarrow x \) in the first line.  We can translate this result to \( \Hbl_n(E) \) to obtain the following.
	
	\begin{Lem}\label{lem:sumtoB}
		In \( \Hbl_n(E) \) the following the following relation holds
		\begin{align*}
		& [a_0 \mid a_1, \ldots, a_n \dblslash x \mid a_{n+1}] \\
		& + [a_0 \mid a_1, \ldots, a_{i-1}, x, a_{i+1}, \ldots, a_n \dblslash a_i \mid a_{n+1}] \\
		& {} = B([a_0 \mid a_1, \ldots, a_n \dblslash x \mid a_{n+1}], i) \, .
		\end{align*}
		
		\begin{proof}
			From the discussion above, we have the result
			\begin{align*}
			& H(a_0 \mid a_1, \ldots, a_n \dblslash x \mid a_{n+1}) + {} \\ 
			& \hspace{2em} {} + H(a_0 \mid a_1, \ldots, a_{i-1}, x, a_{i+1}, \ldots, a_n \dblslash a_i \mid a_{n+1}) + {} \\ 
			& \hspace{2em} {} - B(H(a_0 \mid a_1, \ldots, a_n \dblslash x \mid a_{n+1}), i) = 0
			\end{align*}
			on the level of integrals, with the obvious re-definition of \( B \) to hyperlogarithms.
			
			Replace \( a_{n+1} \rightsquigarrow a_0 + t (a_{n+1} - a_0) \), and go to \( \Hbl_n(E) \) to define
			\begin{align*}
			\alpha(t) & = [a_0 \mid a_1, \ldots, a_n \dblslash x \mid  a_0 + t (a_{n+1} - a_0)] + {} \\ 
			& \hspace{2em} {} + [a_0 \mid a_1, \ldots, a_{i-1}, x, a_{i+1}, \ldots, a_n \dblslash a_i \mid  a_0 + t (a_{n+1} - a_0)] + {} \\ 
			& \hspace{2em} {} - B([a_0 \mid a_1, \ldots, a_n \dblslash x \mid  a_0 + t (a_{n+1} - a_0)], i) \, .
			\end{align*}
			This means \( \alpha(t) \) is constant on the level of integrals, and so \( \alpha(t) \in \mathcal{K}_n(E(t)) \) by Proposition 1 in \cite{zagier1991polylogarithms}.  Therefore we find the following relation \( \alpha(1) - \alpha(0) \in \rel_n(E) \).  But this difference evaluates to
			\begin{align*}
			\alpha(1) - \alpha(0) & =  [a_0 \mid a_1, \ldots, a_n \dblslash x \mid a_{n+1}] + {} \\ 
			& \hspace{2em} {} + [a_0 \mid a_1, \ldots, a_{i-1}, x, a_{i+1}, \ldots, a_n \dblslash a_i \mid   a_{n+1}] + {} \\
			& \hspace{2em} {} - B([a_0 \mid a_1, \ldots, a_n \dblslash x \mid a_{n+1}], i) \, .
			\end{align*}
			Here we use make use of the fact that \( [a_0 \mid \argdash \dblslash x \mid a_0] = 0 \) in \( \Hbl_n(E) \).  We therefore get the claimed result.
		\end{proof}
	\end{Lem}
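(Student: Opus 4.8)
The plan is to transport to \( \Hbl_n(E) \) the honest functional identity for iterated integrals derived just above from \autoref{prop:Hx_as_alternating_sum_of_Hy}: after discarding the terms containing the vanishing form \( \omega(a_i,a_i) \) and splitting off the unique size-one subset, that discussion says exactly that \( H(a_0 \mid a_1,\ldots,a_n\dblslash x\mid a_{n+1}) + H(a_0\mid a_1,\ldots,a_{i-1},x,a_{i+1},\ldots,a_n\dblslash a_i\mid a_{n+1}) - B(\ldots,i) \) vanishes identically as a function (with the obvious extension of \( B \) to hyperlogarithms). So the real content of the lemma is the passage from this analytic identity to a genuine element of \( \rel_n(E) \).

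For that passage I would invoke the defining mechanism of \( \rel_n(E) \) directly. Deform the upper endpoint by \( a_{n+1}\rightsquigarrow a_0 + t(a_{n+1}-a_0) \) and let \( \alpha(t)\in\Bl_n(E(t)) \) be the image of the combination above with this deformed bound. The integral it represents is \( 0 \) for every \( t \), so \( t\mapsto\mathcal{L}_n(\alpha(t)) \) is constant; by Proposition 1 of \cite{zagier1991polylogarithms} this forces \( \alpha(t)\in\mathcal{K}_n(E(t)) \), hence \( \alpha(1)-\alpha(0)\in\rel_n(E) \), i.e.\ \( \alpha(1)-\alpha(0)=0 \) in \( \Hbl_n(E) \).

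It then remains only to evaluate the two endpoint specialisations. At \( t=1 \) the bound is \( a_{n+1} \) again, so \( \alpha(1) \) is precisely the difference (left-hand side of the lemma) \( {}- B(\ldots,i) \) whose vanishing we want. At \( t=0 \) the bound collapses to \( a_0 \), so every summand of \( \alpha(0) \) has the shape \( [a_0\mid\argdash\dblslash\cdot\mid a_0] \), which is \( 0 \) in \( \Hbl_n(E) \) (an iterated integral from a point to itself); hence \( \alpha(0)=0 \), and \( \alpha(1)=0 \) is the claimed identity.

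The only step that is not pure bookkeeping is the implication \emph{constant as a function \( \Rightarrow \) membership in \( \mathcal{K}_n(E(t)) \)}, i.e.\ that the abstract coboundary \( \delta_n \) genuinely detects analytic functional equations; this is exactly what Zagier's result provides, so invoking it correctly — together with the already-noted fact that \( [a_0\mid\argdash\dblslash\cdot\mid a_0]=0 \) in \( \Hbl_n(E) \) — is the crux. The finer points needed to obtain the integral-level identity itself (dropping the \( \omega(a_i,a_i)=0 \) terms, and using the \( a_i\leftrightarrow x \) symmetry of the left-hand side to repackage the alternating subset sum over \( I'\ni i \) as \( B \)) have already been dealt with in the discussion preceding the lemma, so nothing further is needed there.
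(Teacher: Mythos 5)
Your proposal is correct and follows essentially the same route as the paper: the same integral-level identity from the preceding discussion, the same deformation \( a_{n+1}\rightsquigarrow a_0+t(a_{n+1}-a_0) \), the same appeal to Zagier's Proposition 1 to place \( \alpha(t) \) in \( \mathcal{K}_n(E(t)) \), and the same endpoint evaluation using \( [a_0\mid\argdash\dblslash\cdot\mid a_0]=0 \). Nothing further is needed.
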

	
	Now we give a way of `shuffling out' variables from the first position of a hyperlogarithm.  This will be used in \autoref{obs:nminus1vars} to ensure \( a_i \) does not appear in the first position.  We may then split the integral at position \( a_i \) to further reduce the number of variables in each term to obtain the key result in \autoref{prop:switchx}.
	
	\begin{Lem}\label{lem:shuffleouty}
		In \( \Hbl_n(E) \) we have for \( 0 \leq s \leq n \), that
		\begin{align*}
		& [a_0 \mid \{y\}^s, b_{s+1}, b_{s+2}, \ldots, b_n \dblslash x \mid a_{n+1}] \\ 
		&= (-1)^s [a_0 \mid b_{s+1}, \{y\}^s \shuffle (b_{s+2} \cdots b_n) \dblslash x \mid a_{n+1}] \\
		&= (-1)^s \sum_{J} C_J \, .
		\end{align*}
		Here \( C_J \) denotes the symbol \( [a_0 \mid b_{s+1}, \argdash \dblslash x \mid a_{n+1}] \) with the positions \( J \) occupied by \( y \), and the remaining positions by \( b_{s+2}, \ldots, b_n \) in that order.  In the sum, \( J \) runs through subsets of size \( s \) of the set \( \Set{2,3, \ldots, n} \), and \( \Set{y}^s \) means \( y, \ldots, y \) repeated \( s \) times.
		
		\begin{proof}			
			Firstly, observe that the equality with \( \sum_J C_J \) in the third line just comes from writing out the terms of the shuffle product on the previous line.
			
			Each term in the shuffle product \( b_{s+1} \big( \{ y \}^s \shuffle (b_{s+2} \cdots b_n) \big) \) is uniquely determined by which positions contain \( y \).  Since we prepend the result with \( b_{s+1} \), these positions are in the range \( \Set{2, 3, \ldots, n} \), and any subset of these occurs.
			
			We therefore only need to tackle the equality between the first and second lines. \medskip
			
			We see this equality is true for \( s = 0 \).  The first line is
			\[
			[a_0 \mid \{y\}^0, b_{0+1}, b_{0+2}, \ldots, b_n \dblslash x \mid a_{n+1}] = [a_0 \mid b_1, b_2, \ldots, b_n \dblslash x \mid a_{n+1}] \, .
			\]
			The second line is
			\begin{align*}
			& (-1)^0[a_0 \mid b_{0+1}, \{y\}^0 \shuffle (b_{0+2} \cdots b_n) \dblslash x \mid a_{n+1}] \\
			&= [a_0 \mid b_{0+1}, \emptyset \shuffle (b_{0+2} \cdots b_n) \dblslash x \mid a_{n+1}] \\
			&= [a_0 \mid b_1, b_2, \ldots, b_n \dblslash x \mid a_{n+1}] \, .
			\end{align*}
			These are indeed equal. \medskip
			
			Now comes the inductive step.  Recall the inductive definition of \( \shuffle \) from \autoref{def:shuffle}.  It says that
			\[
			aw_1 \shuffle bw_2 = a(w_1 \shuffle bw_2) + b(aw_1\shuffle w_2) \, .
			\]
			So we have that
			\begin{equation}\label{lem:yshuffleb}
			\{y\}^{s+1}\shuffle (b_{s+2}\cdots b_n) = y(\{y\}^s\shuffle b_{s+2}\cdots b_n) + b_{s+2}(\{y\}^{s+1}\shuffle b_{s+3}\cdots b_n) \, .
			\end{equation}
			Therefore we scompute that
			\begin{align*}
			&[a_0 \mid \{y\}^{s+1}, b_{s+2}, b_{s+3}, \cdots, b_n \dblslash x \mid a_{n+1}] \\
			&= [a_0 \mid \{y\}^{s}, y, b_{s+2}, b_{s+3}, \cdots, b_n \dblslash x \mid a_{n+1}] \\
			&= (-1)^s[a_0 \mid y, \{y\}^s \shuffle (b_{s+2} \cdots b_n) \dblslash x \mid a_{n+1}] \, ,
			\end{align*}
			using the induction assumption for \( s \) with \( b_{s+1} = y \).  Now use the relation in \autoref{lem:yshuffleb}, to say
			\begin{align*}
			&= (-1)^s [a_0 \mid \{y\}^{s+1} \shuffle (b_{s+2}\cdots b_n) - b_{s+2}(\{y\}^{s+1}\shuffle b_{s+3}\cdots b_n) \dblslash x \mid a_{n+1}] \\
			& \modsh (-1)^{s+1} [a_0 \mid b_{s+2}, (\{y\}^{s+1} \shuffle (b_{s+3} \cdots b_n) \dblslash x \mid a_{n+1}] \, ,
			\end{align*} since we work modulo products in \( \Hbl_n(E) \).  This proves the result.
		\end{proof}
		
		\begin{Obs}\label{obs:nminus1vars}
			We can apply the above lemma to each term \( A(S, i, I) \) in \( B(S,i) \) from \autoref{lem:sumtoB}, with \( y = a_i \) and \( s \) as large as possible.  Firstly, each term in \( B(S,i) \) has at least one variable \( a_j \) replaced with \( a_i \), so we have reduced the number of variables per term to \( n - 1 \), at most.  Then  by shuffling out \( a_i \), we can guarantee that it does not appear in the first position.  This means \( B(s,i) \) is a sum of hyperlogs \( [a_0 \mid \argdash \dblslash x \mid a_{n+1}] \), where each contains \( \leq n-1 \) variables, and such that \( a_i \) never appears in the first position.
		\end{Obs}

		We now want to split up each integral at the point \( a_i \) to further reduce the number of variables in each term to \( \leq n - 2 \).  We have the following relation in \( \Hbl_n(E) \).
		
		\begin{Lem}\label{lem:integralbounddifference}
			In \( \Hbl_n(E) \), the following relation holds for any generic \( c \), specifically \( c \) such that \( a_1 \neq c \), and \( c \neq x \),
			\[
			[a_0 \mid a_1, \ldots, a_n \dblslash x \mid a_{n+1}] = [c \mid  a_1, \ldots, a_n \dblslash x \mid a_{n+1}] - [c \mid  a_1, \ldots, a_n \dblslash x \mid a_0] \, .
			\]
			
			\begin{proof}
				This follows from the composition of paths property from \autoref{sec:chenmpl}.  For two paths \( \alpha, \beta \), where \( \alpha(1) = \beta(0) \), it states that
				\[
				\int_{\alpha\beta} \eta_1 \circ \cdots \circ \eta_n = \sum_{i=0}^n \int_\alpha \eta_1 \circ \cdots \circ \eta_i \int_\beta \eta_{i+1} \circ \cdots \circ \eta_n \, .
				\]
				Recall now that the empty integral \( \int_\alpha \empty = 1 \).  If we work modulo products, only the integrals coming from \( i = 0 \), and \( i = n \) survive.  Therefore we have
				\[
				\int_{\alpha\beta} \eta_1 \circ \cdots \circ \eta_n \modsh \int_\alpha \eta_1 \circ \cdots \circ \eta_n + \int_\beta \eta_1 \circ \cdots \circ \eta_n \, ,
				\] where \( \modsh \) emphasises that we work modulo products. \medskip
				
				By choosing such a generic \( c \), all the integrals involved will converge.  Then take \( \alpha \) to be a path \( c \to a_0 \) and \( \beta \) a path \( a_0 \to a_{n+1} \).  Choosing \( \eta_i = \omega(a_i,x) \) to be our differential form of interest used in \autoref{def:generalisedhyperlog}, we obtain
				\[
				H(c \mid  a_1, \ldots, a_n \dblslash x \mid a_0) + H(a_0 \mid a_1, \ldots, a_n \dblslash x \mid a_{n+1}) \modsh H(c \mid  a_1, \ldots, a_n \dblslash x \mid a_{n+1}) \, ,
				\] modulo products.  Now view this in \( \Hbl_n(E) \), and rearrange to obtain the above identity.
			\end{proof}
		\end{Lem}
	\end{Lem}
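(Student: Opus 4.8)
The plan is to prove this by induction on $s$, the one conceptual ingredient being that a \emph{full} shuffle product of two nonempty words is a product in $\mathcal{A}(E)$ and therefore vanishes in $\Hbl_n(E)$, where everything is taken modulo products. Before beginning the induction I would dispose of the equality between the second and third lines, which is purely formal: each term of the shuffle $b_{s+1}\big(\{y\}^s \shuffle (b_{s+2}\cdots b_n)\big)$ is uniquely determined by the set $J \subseteq \Set{2, \ldots, n}$ of positions receiving a $y$, so expanding the shuffle literally produces exactly $\sum_J C_J$ with each $C_J$ as defined in the statement.

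For the substantive equality between the first and second lines, the base case $s = 0$ is immediate: the left-hand side is $[a_0 \mid b_1, \ldots, b_n \dblslash x \mid a_{n+1}]$, while on the right $\{y\}^0 \shuffle (b_2 \cdots b_n) = b_2 \cdots b_n$ and $(-1)^0 = 1$, so the two sides coincide. For the inductive step, assuming the claim at level $s$, I would rewrite $\{y\}^{s+1} = \{y\}^s, y$ and apply the induction hypothesis to $[a_0 \mid \{y\}^s, y, b_{s+2}, \ldots, b_n \dblslash x \mid a_{n+1}]$, letting the letter $y$ play the role of $b_{s+1}$. This rewrites it as $(-1)^s [a_0 \mid y,\, (\{y\}^s \shuffle (b_{s+2}\cdots b_n)) \dblslash x \mid a_{n+1}]$.

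The engine of the step is the recursive shuffle identity from \autoref{def:shuffle},
\[
\{y\}^{s+1}\shuffle (b_{s+2}\cdots b_n) = y\big(\{y\}^s\shuffle (b_{s+2}\cdots b_n)\big) + b_{s+2}\big(\{y\}^{s+1}\shuffle (b_{s+3}\cdots b_n)\big) \, .
\]
Solving this for the first summand lets me replace $y\big(\{y\}^s \shuffle (b_{s+2}\cdots b_n)\big)$ by the full shuffle $\{y\}^{s+1}\shuffle (b_{s+2}\cdots b_n)$ minus the term beginning with $b_{s+2}$. The full-shuffle term equals the product $[a_0 \mid \{y\}^{s+1} \mid a_{n+1}] \cdot [a_0 \mid b_{s+2}\cdots b_n \mid a_{n+1}]$ of two nonempty words, hence it dies in $\Hbl_n(E)$; what survives is $-\,b_{s+2}\big(\{y\}^{s+1} \shuffle (b_{s+3}\cdots b_n)\big)$, and the new sign upgrades $(-1)^s$ to $(-1)^{s+1}$, which is precisely the asserted identity at level $s+1$.

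The main obstacle I anticipate is not conceptual but bookkeeping: one must keep the index ranges straight as $b_{s+1}$ is absorbed into $y$ and the shuffle alphabet contracts from $(b_{s+2}\cdots b_n)$ to $(b_{s+3}\cdots b_n)$, and one must confirm that the vanishing-of-a-shuffle-product step genuinely applies, i.e.\ that both factors $\{y\}^{s+1}$ and $(b_{s+2}\cdots b_n)$ are nonempty, which holds exactly when at least one $b$-letter remains. Once that is in place, the content of the argument is the single slogan that \emph{full shuffles are products, and products are zero in $\Hbl_n(E)$}.
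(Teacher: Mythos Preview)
Your proposal is correct and follows essentially the same approach as the paper's proof: induction on $s$, applying the hypothesis with $b_{s+1}$ specialised to $y$, then invoking the recursive shuffle identity to replace $y\big(\{y\}^s \shuffle (b_{s+2}\cdots b_n)\big)$ by the full shuffle minus the $b_{s+2}$-led term, and finally killing the full shuffle modulo products. Your explicit remark that the vanishing step requires both shuffle factors to be nonempty is a small point of care that the paper leaves implicit.
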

	
	According to \autoref{obs:nminus1vars}, \( a_i \) does not appear in the first slot of any term in \( B(S, i) \), so we may use the above lemma to rewrite each term of this sum as 
	\[
	[a_0 \mid \argdash \dblslash x \mid a_{n+1}] = [a_i \mid \argdash \dblslash x \mid a_{n+1}] - [a_i \mid \argdash \dblslash x \mid a_0]
	\]
	This breaks the single term with \( \leq n-1 \) variables into two terms each with \( \leq n-2 \) variables, where the variable \( a_0 \) is avoided in favour of \( a_i \) in the first summand, and the variable \( a_{n+1} \) is avoided in favour of \( a_i \) in the second summand.
	
	\begin{Eg}
		We have
		\begin{align*}
		& A([a_0 \mid a_1, a_2, a_3, a_4, a_5 \dblslash x \mid a_6], 2, \Set{1,2}) \\ 
		{}={}& [a_0 \mid a_2, a_2, a_3, a_4, a_5 \dblslash x \mid a_6] \\ 
		{}={}& (-1)^2 [a_0 \mid a_3, (a_2^2 \shuffle a_4a_5) \dblslash x \mid a_6] \\[1ex]
		{}={}& \begin{aligned}[t]
		&[a_0 \mid a_3, a_2, a_2, a_4, a_5 \dblslash x \mid a_6] + [a_0 \mid a_3, a_2, a_4, a_2, a_5 \dblslash x \mid a_6] + {} \\ &{} + [a_0 \mid a_3, a_4, a_2, a_2, a_5 \dblslash x \mid a_6] + [a_0 \mid a_3, a_2, a_4, a_5, a_2 \dblslash x \mid a_6] + {} \\ &{} + [a_0 \mid a_3, a_4, a_2, a_5, a_2 \dblslash x \mid a_6] + [a_0 \mid a_3, a_4, a_5, a_2, a_2 \dblslash x \mid a_6]
		\end{aligned}
		\end{align*}
		Then each term can be split as indicated above.  So the first term would become
		\[
		[a_2 \mid a_3, a_2, a_2, a_4, a_5 \dblslash x \mid a_6] - [a_2 \mid a_3, a_2, a_2, a_4, a_5 \dblslash x \mid a_0] \, ,
		\]
		and similarly for the rest.
	\end{Eg}
	
	Finally, this proves the following proposition
	
	\begin{Prop}\label{prop:switchx}
		We may express
		\begin{multline*}
		[a_0 \mid a_1, \ldots, a_n \dblslash x \mid a_{n+1}] + [a_0 \mid a_1, \ldots, a_{i-1}, x, a_{i+1}, \ldots, a_n \dblslash a_i \mid a_{n+1}] \\= B([a_0 \mid a_1, \ldots, a_n \dblslash x \mid a_{n+1}], i) \, ,
		\end{multline*}
		as an explicit sum of hyperlogs in \( \leq n-2 \) variables.
	\end{Prop}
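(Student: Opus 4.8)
The equality displayed in the statement is exactly \autoref{lem:sumtoB}, so the real content of the proposition is the assertion that the right-hand side \( B([a_0 \mid a_1, \ldots, a_n \dblslash x \mid a_{n+1}], i) \) can be written out as an explicit sum of generalised hyperlogarithms, each in at most \( n-2 \) variables — equivalently, each involving at most \( n+1 \) of the \( n+3 \) formal parameters (the two bounds, the \( n \) arguments, and the \( \dblslash \)-slot), since by \autoref{rem:affine} together with \autoref{prop:hasi} such a symbol carries a three-parameter family of symmetries, the natural \( PGL_2 \)-action on \( a_0, \ldots, a_{n+1}, x \) simultaneously. The plan is to expand \( B(S,i) \) term by term and keep careful track of which parameters can survive.

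Starting from \( B(S,i) = \sum_{I} (-1)^{\size I} A(S,i,I) \), the sum over subsets \( I \ni i \) of \( \Set{1, \ldots, n} \) with \( \size I \geq 2 \), I would observe that for each such \( I \) the symbol \( A(S,i,I) = [a_0 \mid c_1, \ldots, c_n \dblslash x \mid a_{n+1}] \), with \( c_j = a_i \) for \( j \in I \) and \( c_j = a_j \) otherwise, has had the \( \size I - 1 \geq 1 \) arguments \( a_j \), \( j \in I \setminus \Set{i} \), overwritten by \( a_i \); as the \( a \)'s are generic these do not reappear, so this term already uses at most \( n+2 \) distinct parameters, that is, at most \( n-1 \) variables, which is the bound of \autoref{obs:nminus1vars}. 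If \( a_i \) still occupies the first slot, say \( c_1 = \cdots = c_s = a_i \) with \( c_{s+1} \neq a_i \) and \( s \geq 1 \), I would invoke \autoref{lem:shuffleouty} with \( y = a_i \) to rewrite \( A(S,i,I) \), modulo products, as \( (-1)^s \) times a sum of symbols \( [a_0 \mid c_{s+1}, \argdash \dblslash x \mid a_{n+1}] \) each having first argument \( c_{s+1} \neq a_i \); since shuffling only permutes arguments, the set of parameters, and hence the bound of at most \( n-1 \) variables, is unchanged.

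With \( a_i \) absent from the first slot of every remaining symbol, I would apply \autoref{lem:integralbounddifference} with the generic point \( c = a_i \) — legitimate since \( c_{s+1} \neq a_i \) by construction and \( a_i \neq x \) may be assumed — to write
\[
[a_0 \mid c_{s+1}, \argdash \dblslash x \mid a_{n+1}] = [a_i \mid c_{s+1}, \argdash \dblslash x \mid a_{n+1}] - [a_i \mid c_{s+1}, \argdash \dblslash x \mid a_0] \, .
\]
In the first summand the bound \( a_0 \) has been traded for \( a_i \), which was already present among the arguments; together with the \( a_j \) lost in the \( B \)-step this means the summand uses at most \( n+1 \) distinct parameters, i.e. at most \( n-2 \) variables. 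The second summand is symmetric, with \( a_{n+1} \) traded for \( a_i \). Assembling all the pieces over admissible \( I \), with the recorded signs \( (-1)^{\size I + s} \), then exhibits \( B(S,i) \), and hence — via \autoref{lem:sumtoB} — the left-hand side of the proposition, explicitly as a sum in \( \leq n-2 \) variables.

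The shuffle expansions and sign bookkeeping are routine; the one step I expect to require real care is the variable count itself, namely that trading the bound \( a_0 \) (or \( a_{n+1} \)) for the already-present \( a_i \), while having overwritten some \( a_j \), genuinely costs two variables. This relies on using the three-parameter symmetry group to normalise three of the surviving parameters, and on genericity of the \( a \)'s to rule out coincidences among them; pinning this down precisely is the main obstacle.
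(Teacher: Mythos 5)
You follow the paper's own route: the displayed equality is just \autoref{lem:sumtoB}, and your reduction of \( B(S,i) \) — expand into the terms \( A(S,i,I) \), shuffle \( a_i \) out of the first slot via \autoref{lem:shuffleouty} (this is \autoref{obs:nminus1vars}), then split the path at \( c = a_i \) via \autoref{lem:integralbounddifference} — is exactly the argument given in the text, with the same sign bookkeeping.

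The only thing to repair is the step you yourself flag as the ``main obstacle'', and it is worth repairing because it is in fact the immediate part. Reading ``\( \leq n-2 \) variables'' as ``at most \( n+1 \) of the \( n+3 \) formal parameters, modulo the three-parameter symmetry'' is not the right invariant: for \( n = 4 \) the symbol \( [a_0 \mid a_1, a_1, a_1, a_4 \dblslash x \mid a_5] \) uses only \( n+1 \) distinct parameters, yet under the conversion of \autoref{sec:asmplccr} it becomes \( I_{1,1,1,1}(u,u,u,v) \) with \( u = \CR(x,a_0,a_5,a_1) \), \( v = \CR(x,a_0,a_5,a_4) \), still of full depth. What the reduction needs (cf.\ the introduction and \autoref{sec:asmplccr}) is that at least two argument slots coincide with the \emph{lower bound}, so that they become \( 0 \) and the depth drops by two. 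Your construction does achieve this, and for a reason that requires no \( PGL_2 \) normalisation argument: every \( I \) occurring in the \( B \)-sum contains \( i \) and has \( \size{I} \geq 2 \), so each \( A(S,i,I) \) carries the value \( a_i \) in at least two argument slots; the shuffle in \autoref{lem:shuffleouty} only permutes arguments, and after the bound swap both summands have lower bound \( a_i \). Normalising as in \autoref{sec:asmplccr} (lower bound to \( 0 \), upper bound to \( 1 \), \( x \) to \( \infty \)) kills those \( \geq 2 \) slots, leaving an MPL of depth \( \leq n - \size{I} \leq n-2 \); the only genericity needed is \( a_j \neq a_i \) for \( j \neq i \) and \( a_i \neq x \). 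One small loose end in your case analysis: for \( I = \Set{1,\ldots,n} \) every argument equals \( a_i \) and there is no \( c_{s+1} \) to move to the front, but that term is \( \tfrac{1}{n!} \) times the \( n \)-fold shuffle power of the weight-one symbol \( [a_0 \mid a_i \dblslash x \mid a_{n+1}] \), hence vanishes modulo products and can simply be discarded.
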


	This completes the second step of the method, as outlined in \autoref{sec:overview}.
	
	\subsection{\texorpdfstring{Operator \( D \)}{Operator D}}\label{sec:operatorD}

	We continue now with the third step of the method, a outlined in \autoref{sec:overview}.
	
	We introduce the operator \( D \) which packages up the result in \autoref{prop:switchx}.  We use this to build up a reduction for a transposition \( a_i \leftrightarrow a_j \) to a sum in \( \leq n-2 \) variables, and then such a reduction for any permutation of the \( a_i \).
	
	\begin{Def}\label{def:operatorD}
		The explicit sum in \( \leq n - 2 \) variables produced in \autoref{prop:switchx}, above, will be denoted 
		\[
		D([a_0 \mid a_1, \ldots, a_n \dblslash x \mid a_{n+1}], i) \, .
		\]
	\end{Def}
	
	\begin{Prop}\label{prop:transposition}
		A transposition of two variables can be expressed in terms of three \( D \) operations as follows
		\begin{align*}
		& [a_0 \mid \ldots, a_i, \ldots, a_j, \ldots \dblslash x \mid a_{n+1}] + [a_0 \mid \ldots, a_j, \ldots, a_i, \ldots \dblslash x \mid a_{n+1}] \\
		&= D([a_0 \mid \ldots, a_i, \ldots, a_j, \ldots \dblslash x \mid a_{n+1}], i)- D([a_0 \mid \ldots, x, \ldots, a_j, \ldots \dblslash a_i \mid a_{n+1}], j) + {} \\
		& \quad \quad {} + D([a_0 \mid \ldots, x, \ldots, a_i, \ldots \dblslash a_j \mid a_{n+1}], i) \, .
		\end{align*}
		
		\begin{proof}
			This is just a case of writing out the result of the three applications of \( D \).  Namely
			\begin{align*}
			& D([a_0 \mid \ldots, a_i, \ldots, a_j, \ldots \dblslash x \mid a_{n+1}], i) + {} \\
			& \hspace{1em} {} - D([a_0 \mid \ldots, x, \ldots, a_j, \ldots \dblslash a_i \mid a_{n+1}], j) + {} \\
			& \hspace{1em} {} + D([a_0 \mid \ldots, x, \ldots, a_i, \ldots \dblslash a_j \mid a_{n+1}], i) \\[1ex]
			& = ([a_0 \mid \ldots, a_i, \ldots, a_j, \ldots \dblslash x \mid a_{n+1}] + [a_0 \mid \ldots, x, \ldots, a_j, \ldots \dblslash a_i \mid a_{n+1}]) + {} \\
			& \hspace{2em} {} - ([a_0 \mid \ldots, x, \ldots, a_j, \ldots \dblslash a_i \mid a_{n+1}] + [a_0 \mid \ldots, x, \ldots, a_i, \ldots \dblslash a_j \mid a_{n+1}]) + {} \\
			& \hspace{2em} {} + ([a_0 \mid \ldots, x, \ldots, a_i, \ldots \dblslash a_j \mid a_{n+1}] + [a_0 \mid \ldots, a_j, \ldots, a_i, \ldots \dblslash x \mid a_{n+1}]) \\[1ex]
			& = [a_0 \mid \ldots, a_i, \ldots, a_j, \ldots \dblslash x \mid a_{n+1}] + [a_0 \mid \ldots, a_j, \ldots, a_i, \ldots \dblslash x \mid a_{n+1}] \, . \qedhere
			\end{align*}
		\end{proof}
	\end{Prop}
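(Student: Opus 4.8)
The plan is to unwind the definition of the $D$ operator and observe that the right-hand side telescopes. Recall from \autoref{prop:switchx} and \autoref{def:operatorD} that, for any generalised hyperlogarithm symbol $S = [a_0 \mid \ldots \dblslash x \mid a_{n+1}]$, the quantity $D(S, i)$ is an explicit sum in $\leq n-2$ variables which equals $S$ plus the symbol obtained from $S$ by interchanging the variable occupying position $i$ with the variable sitting in the double-slash slot. Thus every $D$ has the shape ``a symbol plus its partner under the swap of position $i$ with the double-slash slot''.

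First I would name the three symbols that will appear. Set
\[
S_1 = [a_0 \mid \ldots, a_i, \ldots, a_j, \ldots \dblslash x \mid a_{n+1}], \qquad S_2 = [a_0 \mid \ldots, a_j, \ldots, a_i, \ldots \dblslash x \mid a_{n+1}],
\]
and
\[
T_1 = [a_0 \mid \ldots, x, \ldots, a_j, \ldots \dblslash a_i \mid a_{n+1}], \qquad T_2 = [a_0 \mid \ldots, x, \ldots, a_i, \ldots \dblslash a_j \mid a_{n+1}].
\]
Then, reading off the description of $D$ above, I would verify the three elementary identities $D(S_1, i) = S_1 + T_1$ (the swap at position $i$ trades $a_i$ for $x$), $D(T_1, j) = T_1 + T_2$ (the swap at position $j$ trades $a_j$ for $a_i$, which now occupies the double-slash slot of $T_1$), and $D(T_2, i) = T_2 + S_2$ (the swap at position $i$ trades $x$ for $a_j$, recovering the transposed symbol).

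With these three equalities the proof closes by a single cancellation:
\[
D(S_1, i) - D(T_1, j) + D(T_2, i) = (S_1 + T_1) - (T_1 + T_2) + (T_2 + S_2) = S_1 + S_2,
\]
which is exactly the asserted identity. Everything takes place in $\Hbl_n(E)$ for generic parameter values, so the genericity and convergence hypotheses required to invoke \autoref{prop:switchx} at each step are automatically satisfied, and there is no analytic content beyond what is already packaged into $D$.

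The only place that demands any care is the middle step: after the first $D$ has moved $x$ into position $i$ and $a_i$ into the double-slash slot, one must apply the second $D$ \emph{at position $j$} (not at position $i$), so that it produces precisely the leftover term $T_1$ of the first $D$ together with the new term $T_2$; and then the third $D$, again at position $i$ but now acting on $T_2$, must return exactly $S_2$. This is pure index bookkeeping, and I expect it to be the ``hard part'' only in the sense that one has to track which variable sits where through the chain of swaps — there is no genuine difficulty.
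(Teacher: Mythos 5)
Your proposal is correct and is essentially the paper's own argument: you invoke \autoref{prop:switchx} to identify each \(D(S,k)\) in \(\Hbl_n(E)\) with the sum of \(S\) and its partner under the position-\(k\)/double-slash swap, and then the alternating sum \((S_1+T_1)-(T_1+T_2)+(T_2+S_2)\) telescopes to \(S_1+S_2\). The index bookkeeping you flag (second \(D\) applied at position \(j\), third at position \(i\)) is exactly how the paper's proof proceeds.
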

	
	\begin{Cor}\label{cor:transpotition}
		The combination
		\[
		[a_0 \mid \ldots, a_i, \ldots, a_j, \ldots \dblslash x \mid a_{n+1}] + [a_0 \mid \ldots, a_j, \ldots, a_i, \ldots \dblslash x \mid a_{n+1}]
		\]
		is an explicit sum of hyperlogs in \( \leq n-2 \) variables.  More generally, for any permutation \( \sigma \in S_n \),
		\[
		[a_0 \mid \sigma \cdot (a_1, \ldots, a_n) \dblslash x \mid a_{n+1}] - \sgn(\sigma) [a_0 \mid a_1, \ldots, a_n \dblslash x \mid a_{n+1}]
		\]
		is an explicit sum of hyperlogs in \( \leq n-2 \) variables.  Here \( \sigma \) acts by permuting the indices \( \sigma \cdot (a_1, \ldots, a_n) = (a_{\sigma(1)}, \ldots, a_{\sigma(n)}) \).
		
		\begin{proof}
			The first claim is immediate because we defined \( D \) to be such an explicit sum of hyperlogs in \( \leq  n-2 \) variables.  By decomposing a permutation as a product of transpositions, we get by induction the result for any permutation \( \sigma \), as follows.  Suppose the claim holds for \( \sigma \).  Let \( \tau \in S_n \) be a transposition.  Then \( \sgn(\tau) = -1 \), and for \( \tau \sigma \) we have
			\begin{align*}
			& [a_0 \mid \tau \sigma\cdot  (a_1, \ldots, a_n) \dblslash x \mid a_{n+1}] - \sgn(\tau\sigma)[a_0 \mid a_1, \ldots, a_n \dblslash x \mid a_{n+1}] \\
			& = [a_0 \mid \tau \sigma\cdot  (a_1, \ldots, a_n) \dblslash x \mid a_{n+1}] - \sgn(\tau)[a_0 \mid \sigma \cdot (a_1, \ldots, a_n) \dblslash x \mid a_{n+1}]) + {} \\
			& \hspace{2em} {} - ([a_0 \mid \sigma\cdot  (a_1, \ldots, a_n) \dblslash x \mid a_{n+1}] - \sgn(\sigma)[a_0 \mid a_1, \ldots, a_n \dblslash x \mid a_{n+1}]) \, .
			\end{align*}
			Both of these summands is a sum in \( \leq n-2 \) variables, so the claim holds.
		\end{proof}	
	\end{Cor}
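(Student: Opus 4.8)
The plan is to deduce both claims directly from the transposition identity of \autoref{prop:transposition} together with \autoref{def:operatorD}, which already guarantees that every \( D \)-term is an explicit sum of hyperlogs in \( \leq n-2 \) variables; no new calculation with differential forms or with \( \Hbl_n(E) \) is needed.

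For the first claim I would simply invoke \autoref{prop:transposition}: it writes the symmetrised pair \( [a_0 \mid \ldots, a_i, \ldots, a_j, \ldots \dblslash x \mid a_{n+1}] + [a_0 \mid \ldots, a_j, \ldots, a_i, \ldots \dblslash x \mid a_{n+1}] \) as a signed sum of three \( D \)-operators. Since each of these is, by the very definition of \( D \) in \autoref{def:operatorD} (which comes from \autoref{prop:switchx}), an explicit sum of hyperlogs in \( \leq n-2 \) variables, so is their combination. That settles the depth count for a single transposition.

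For the general permutation statement I would decompose \( \sigma \) as a product of transpositions and argue by induction on the number of factors. The base case \( \sigma = \id \) is trivial, since the displayed expression is then identically \( 0 \). For the inductive step, suppose the claim holds for \( \sigma \in S_n \), and let \( \tau \in S_n \) be a transposition, so \( \sgn(\tau) = -1 \) and \( \sgn(\tau\sigma) = -\sgn(\sigma) \). I would then write
\begin{align*}
& [a_0 \mid \tau\sigma\cdot(a_1,\ldots,a_n) \dblslash x \mid a_{n+1}] - \sgn(\tau\sigma)\,[a_0 \mid a_1,\ldots,a_n \dblslash x \mid a_{n+1}] \\
&= \bigl( [a_0 \mid \tau\sigma\cdot(a_1,\ldots,a_n) \dblslash x \mid a_{n+1}] + [a_0 \mid \sigma\cdot(a_1,\ldots,a_n) \dblslash x \mid a_{n+1}] \bigr) \\
&\quad {}- \bigl( [a_0 \mid \sigma\cdot(a_1,\ldots,a_n) \dblslash x \mid a_{n+1}] - \sgn(\sigma)\,[a_0 \mid a_1,\ldots,a_n \dblslash x \mid a_{n+1}] \bigr) \, .
\end{align*}
The first bracket is exactly a symmetrised transposed pair — namely \( \tau \) applied to the tuple \( \sigma\cdot(a_1,\ldots,a_n) \) — so by the first claim it is an explicit sum of hyperlogs in \( \leq n-2 \) variables; the second bracket is such a sum by the inductive hypothesis for \( \sigma \). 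Hence so is their difference, which completes the induction.

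The only point requiring genuine care is the bookkeeping of signs: one must check that \( -\sgn(\tau) = +1 \), so that the first bracket above really is the symmetric combination \( [\cdots] + [\cdots] \) handled by \autoref{prop:transposition}, rather than a difference. Once this matching of signs is in place, the argument is purely formal, and I do not anticipate any substantive obstacle.
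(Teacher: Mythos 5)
Your proposal is correct and follows essentially the same route as the paper: the first claim comes straight from \autoref{prop:transposition} and the definition of \( D \), and the general case is handled by the identical telescoping decomposition \( [\tau\sigma\cdot a] - \sgn(\tau\sigma)[a] = ([\tau\sigma\cdot a]+[\sigma\cdot a]) - ([\sigma\cdot a]-\sgn(\sigma)[a]) \) with induction on the number of transpositions. The only (harmless) difference is that you make the base case \( \sigma = \id \) and the sign bookkeeping explicit, which the paper leaves implicit.
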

	
	This completes the third step of the method, as outlined in \autoref{sec:overview}.
	
	\subsection{Reducing a single hyperlog}
	
	We continue with the fourth step of the method, as outlined in \autoref{sec:overview}.  This produces a proof of concept reduction of the hyperlogarithm \( [a_0 \mid a_1, \ldots, a_n \dblslash x \mid a_{n+1}] \) to an explicit sum in \( \leq n - 2 \) variables. \medskip
	
	The above considerations allow us to write a combination of two hyperlogarithms, whose arguments differ by a permutation, as a sum of hyperlogarithms in \( \leq n-2 \) variables.  By carefully considering these combinations, it is possible to write a single hyperlogarithm \( [a_0 \mid a_1, \ldots, a_n \dblslash x \mid a_{n+1}] \) in such a manner.  We do this by considering the sum of the signs of the permutations in the set \( S(2,n-2) \) of \( (2,n-2) \)-shuffles.
	
	\begin{Prop}\label{prop:signof2shufflen-2}
		Let \( S(2,n-2) \) be the set of \( (2, n-2) \)-shuffles, which can be considered the as the words from \( a_1 a_2 \shuffle a_3\cdots a_n \).  Then
		\[
		\sum_{\sigma \in S(2,n-2)} \sgn(\sigma) = \floor{\frac{n}{2}}
		\]
		
		\begin{proof}
			Observe that every permutation in the set of \( (2,n-2)\)-shuffles, is uniquely determined by the position of \( 1 \) and the position of \( 2 \).  Moreover, 2 must appear after 1 since this is the ordering in the original multiplicand.  So each term is described by
			\[
			S_{i,j}^n \coloneqq \Set{a_3, a_4, \ldots, \smash{\underbrace{a_1}_{\mathclap{\text{position \( i \) }}}}, \ldots, \smash{\underbrace{a_2}_{\mathclap{\text{position \( j \) }}}}, \ldots }  \mathllap{\phantom{\underbrace{a_1}_{\text{position}}}} \, ,
			\]
			for some \( 1 \leq i < j \leq n \).  For example
			\[
				S_{2,5}^7 = \Set{ a_3, \smash{\underbrace{a_1}_{\mathclap{\text{position 2 }}}}, a_4, a_5, \smash{\underbrace{a_2}_{\mathclap{\text{position 5}}}}, a_6, a_7 } \mathllap{\phantom{\underbrace{a_1}_{\text{position}}}} \, .
			\]
			
			What is \( \sgn(S_{i,j}^n) \)?  To put \( 2 \) into position \( j \) from its original position 2 requires \( j - 2 \) swaps.  Then to put \( 1 \) into position \( i \) from its original position 1 requires a further \( i - 1 \) swaps.  So the total number of swaps is \( i + j - 3 \).  We find
			\[
			\sgn(S_{i,j}^n) = \begin{cases}
			-1 & \text{if \( i+j \) is even} \\
			1 & \text{if \( i+j \) is odd.}
			\end{cases}
			\]
			
			If we sum all the signs, we obtain
			\begin{align*}
			\sum_{\sigma \in S} \sgn(\sigma) = \sum_{i = 1}^{n} \sum_{j = i+1}^{n} \sgn(S_{i,j}^n) \, .
			\end{align*}
			Observe that in the inner sum, consecutive terms have opposite signs.  At term \( j \), the value \( i+j \) has one parity, which means at term \( j+1 \), the parity of \( i+(j+1) \) is different.  If there are an even number of terms in the inner sum, then they all cancel in pairs to 0.  Otherwise the terms after the first cancel, and we are left with \( \sgn(S_{i,i+1}^n) = 1 \) since \( i + (i+1) \) is odd.  The number of terms in the inner sum is \( n - (i+1) + 1 = n - i \), so this is odd if and only if \( n \) and \( i \) have different parities.
			
			If \( n = 2m \) is even, we obtain:
			\[
			\sum_{i = 1}^{n} \sum_{j = i+1}^{n} \sgn(S_{i,j}^n) = \sum_{\substack{i = 1 \\ \text{\( i \) odd}}}^{2m} 1 = m = \floor{n/2} \, .
			\]
			
			And if \( n = 2m+1 \) is odd, we obtain:
			\[
			\sum_{i = 1}^{n} \sum_{j = i+1}^{n} \sgn(S_{i,j}^n) = \sum_{\substack{i = 1 \\ \text{\( i \) even}}}^{2m+1} 1 = m = \floor{n/2} \, .
			\]
			
			This proves the result.
		\end{proof}
	\end{Prop}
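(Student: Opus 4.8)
The plan is to parametrise the $(2,n-2)$-shuffles explicitly, compute the sign of each one, and then evaluate the resulting double sum by an elementary parity argument. Since a word appearing in $a_1 a_2 \shuffle a_3 \cdots a_n$ preserves the relative order of $a_1, a_2$ and of $a_3, \ldots, a_n$, such a word is completely determined by the pair $(i,j)$ of positions occupied by $a_1$ and $a_2$, with $1 \le i < j \le n$ (this is exactly the bijection with the sets $S^n_{i,j}$ in the statement). So $\sum_{\sigma \in S(2,n-2)} \sgn(\sigma)$ becomes $\sum_{1 \le i < j \le n} \varepsilon(i,j)$, where $\varepsilon(i,j)$ denotes the sign of the permutation carrying $(a_1,\ldots,a_n)$ to the shuffle word with $a_1$ in slot $i$ and $a_2$ in slot $j$.

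First I would pin down $\varepsilon(i,j)$. Starting from the identity word $a_1 a_2 a_3 \cdots a_n$, move $a_2$ rightward from position $2$ to position $j$, crossing the $j-2$ letters $a_3,\ldots,a_j$ at a cost of $j-2$ adjacent transpositions; then move $a_1$ rightward from position $1$ to position $i$, crossing $i-1$ letters at a further cost of $i-1$ transpositions. Hence the shuffle is reached in $i+j-3$ transpositions, so $\varepsilon(i,j) = (-1)^{i+j-3} = (-1)^{i+j+1}$, i.e. $+1$ when $i+j$ is odd and $-1$ when $i+j$ is even.

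Next I would evaluate $\sum_{i=1}^{n} \sum_{j=i+1}^{n} (-1)^{i+j+1}$. In the inner sum the summand alternates in sign with $j$, starting at $j = i+1$ with value $(-1)^{2i+2} = +1$, and there are $n-i$ terms; so the inner sum is $0$ when $n-i$ is even and $1$ when $n-i$ is odd, i.e. it contributes exactly when $i$ and $n$ have opposite parity. The total is therefore the number of $i \in \Set{1,\ldots,n}$ with $i \not\equiv n \pmod 2$, which is $n/2$ when $n$ is even and $(n-1)/2$ when $n$ is odd; in both cases this equals $\floor{n/2}$, as claimed.

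There is no genuine obstacle here: the only point requiring care is the bookkeeping of $\varepsilon(i,j)$ — one must be consistent about whether $\sigma$ or $\sigma^{-1}$ labels the shuffle word, and about the order in which $a_1, a_2$ are slid into place — but any consistent convention yields the parity $(-1)^{i+j-1}$, and the subsequent count is routine. (Alternatively one could deduce the identity from the general signed-shuffle formula $\sum_{\sigma \in S(r,s)} \sgn(\sigma) = \binom{\floor{(r+s)/2}}{\floor{r/2}}$ specialised to $r=2$, but the direct argument above is self-contained and gives the sign data needed elsewhere in the reduction.)
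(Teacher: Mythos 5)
Your proposal is correct and follows essentially the same route as the paper: labelling each $(2,n-2)$-shuffle by the positions $(i,j)$ of $a_1$ and $a_2$, computing its sign as $(-1)^{i+j-3}$ via the $j-2$ and $i-1$ adjacent swaps, and evaluating the double sum by the same alternating/parity count yielding $\floor{n/2}$. No gaps to report.
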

	
	There is enough here now to prove that a depth \( n \) hyperlog in \( n \geq 3 \) variables can be reduced to a sum of hyperlogs in \( \leq n - 2 \) variables.  We obtain the following.
	
	\begin{Thm}\label{thm:reductionofhyperlog}
		For \( n \geq 3 \), the hyperlog
		\[
		[a_0 \mid a_1, \ldots, a_n \dblslash x \mid a_{n+1}]
		\]
		can be expressed as a sum of hyperlogs in \( \leq n-2 \) variables.
		
		\begin{proof}
			For each \( \sigma \in S(2,n-2) \), we have that
			\[
			[a_0 \mid \sigma \cdot (a_1, \ldots, a_n) \dblslash x \mid a_{n+1}] - \sgn(\sigma)[a_0 \mid a_1, \ldots, a_n \dblslash x \mid a_{n+1}] 
			\] can be expressed as a sum in \( \leq n-2 \) variables.  Now sum over all such \( \sigma \in S(2,n-2) \).  The left hand terms sum over \( a_1a_2 \shuffle a_3\cdots a_n \).  The right hand terms are all the same, so sum to the multiple \( \sum_{\sigma \in S} \sgn(\sigma) = \floor{n/2} \).  Therefore we get that
			\[
			[a_0 \mid a_1 a_2 \shuffle a_3 \cdots a_n \dblslash x \mid a_{n+1}] - \floor{n/2} [a_0 \mid a_1, \ldots, a_n \dblslash x \mid a_{n+1}]
			\] is a sum of hyperlogarithms in \( \leq n-2 \) variables. \medskip
			
			As we work modulo products the first term here is actually 0 if \( n \geq 3 \), so this shows that
			\[
				[a_0 \mid a_1, \ldots, a_n \dblslash x \mid a_{n+1}]
			\]
			is a sum of hyperlogs in \( \leq n-2 \) variables.
		\end{proof}
	\end{Thm}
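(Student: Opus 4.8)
The plan is to antisymmetrise over a whole shuffle orbit at once. Fix $n \geq 3$. For each $\sigma \in S(2,n-2)$, \autoref{cor:transpotition} already gives us that
\[
[a_0 \mid \sigma \cdot (a_1, \ldots, a_n) \dblslash x \mid a_{n+1}] - \sgn(\sigma)\,[a_0 \mid a_1, \ldots, a_n \dblslash x \mid a_{n+1}]
\]
is an explicit sum of hyperlogs in $\leq n-2$ variables. No single such relation isolates the target hyperlog, but the idea is that running over \emph{all} $(2,n-2)$-shuffles causes the correction terms $\sgn(\sigma)[a_0 \mid a_1,\ldots,a_n\dblslash x\mid a_{n+1}]$ to add up to a \emph{nonzero} integer multiple of the target, while the leading terms reassemble into a shuffle product that is invisible modulo products.

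Concretely, I would sum the displayed expression over all $\sigma \in S(2,n-2)$. The leading terms collect, by the very definition of the shuffle product of words, into $[a_0 \mid a_1 a_2 \shuffle a_3 \cdots a_n \dblslash x \mid a_{n+1}]$; the correction terms collect into $\bigl(\sum_{\sigma \in S(2,n-2)} \sgn(\sigma)\bigr)[a_0 \mid a_1,\ldots,a_n\dblslash x\mid a_{n+1}]$. Here \autoref{prop:signof2shufflen-2} supplies the key evaluation $\sum_{\sigma} \sgn(\sigma) = \floor{n/2}$. Hence
\[
[a_0 \mid a_1 a_2 \shuffle a_3 \cdots a_n \dblslash x \mid a_{n+1}] - \floor{n/2}\,[a_0 \mid a_1, \ldots, a_n \dblslash x \mid a_{n+1}]
\]
is a sum of hyperlogs in $\leq n-2$ variables.

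To finish I would invoke that we work in $\Hbl_n(E)$, i.e.\ modulo products: the shuffle term equals the product $[a_0 \mid a_1, a_2 \dblslash x \mid a_{n+1}] \cdot [a_0 \mid a_3, \ldots, a_n \dblslash x \mid a_{n+1}]$ of two positive-weight classes (weights $2$ and $n-2 \geq 1$ when $n \geq 3$), so it vanishes in $\Hbl_n(E)$. This leaves $\floor{n/2}\,[a_0 \mid a_1, \ldots, a_n \dblslash x \mid a_{n+1}]$ written as a sum of hyperlogs in $\leq n-2$ variables, and dividing by the nonzero integer $\floor{n/2}$ (legitimate since $\Hbl_n(E)$ is a $\Q$-vector space) gives the claim. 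The one genuinely substantive ingredient — already packaged into the cited results — is the sign bookkeeping behind $\sum_\sigma \sgn(\sigma) = \floor{n/2}$; granting that, together with \autoref{cor:transpotition}, the rest is formal manipulation inside the shuffle-modulo-products framework, so I expect no further obstacle.
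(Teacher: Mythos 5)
Your proposal is correct and follows the paper's own argument essentially verbatim: sum the relation from \autoref{cor:transpotition} over all $(2,n-2)$-shuffles, evaluate the sign sum via \autoref{prop:signof2shufflen-2}, discard the shuffle-product term modulo products, and divide by $\floor{n/2}$. The only (welcome) extra detail you add is spelling out that the shuffle term is a product of two positive-weight classes and that dividing by $\floor{n/2}$ is legitimate in a $\Q$-vector space, which the paper leaves implicit.
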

	
	This completes the fourth step of the reduction method, as outlined in \autoref{sec:overview}.  It should be noted, however, that the reduction in this theorem is really only intended as a proof-of-concept.  The number of terms generated by relating \( [a_0 \mid \sigma \cdot (a_1, \ldots, a_n) \dblslash x \mid a_{n+1}] \) back to \( [a_0 \mid a_1, \ldots, a_n \dblslash x \mid a_{n+1}] \), for every permutation \( \sigma \in S(2,n-2) \) is excessive; DAn goes on to describes a more efficient approach.
	
	\subsection{More efficient approach for the reduction}\label{sec:structured}
	
	\subsubsection{\texorpdfstring{Efficient approach for any \( n \)}{Efficient approach for any n}}
	\label{sec:structured_alln}

	In the fifth and sixth steps of the reduction method, as outlined in \autoref{sec:overview}, Dan proceeds in a more efficient way to combine terms in \( a_1 a_2 \shuffle a_3\cdots a_n \) using transpositions as far as possible.  Some of the ideas involved are already present in the proof of \autoref{prop:signof2shufflen-2}.
		
	\begin{Def}
		Define the symbol \( A_{i,j}^n \) to be the following
		\[
		A_{i,j}^n \coloneqq [a_0 \mid a_3, a_4, \ldots, \underbrace{a_1}_{\mathclap{\text{position \( i \) }}}, \ldots, \underbrace{a_2}_{\mathclap{\text{position \( j \)}}}, \ldots \dblslash x \mid a_{n+1}] \, ,
		\]
		where position \( i \) is filled with \( a_1 \), and position \( j \) is filled with \( a_2 \).  The remaining positions are filled with \( a_3, \ldots, a_n \) in this order.  (Notice the similarity to \( S_{i,j}^n \) from the proof of \autoref{prop:signof2shufflen-2}, essentially \( A_{i,j}^n = [a_0 \mid S_{i,j}^n \dblslash x \mid a_{n+1}] \).)
	\end{Def}
	
	In the original article, Dan uses the notation \( A_{i,j} \), leaving the dependence on \( n \) implicit only.  For clarity here, and later, I write \( A_{i,j}^n \) in order to make the dependence on \( n \) as explicit as possible.  Similarly we will write \( R^n \) where Dan later writes \( R \), and \( c^n \) where Dan writes \( c \).
	
	\begin{Eg}
		With \( n = 7 \), and \( i = 2, j = 5 \), we have
		\[
		A_{2,5}^7 = [a_0 \mid a_3, \underbrace{a_1}_{\mathclap{\text{position 2 }}}, a_4, a_5, \underbrace{a_2}_{\mathclap{\text{position 5}}}, a_6, a_7 \dblslash x \mid a_8] \, .
		\]
	\end{Eg}
	
	\begin{Lem}\label{lem:ai-1jij}
		Consider now the expression \( A_{i-1,j}^n + A_{i,j}^n \).  This can be expressed as an explicit sum of hyperlogs in \( \leq n-2 \) variables.  Similarly \( A_{i,j}^n + A_{i,j+1}^n \) can be expressed as an explicit sum of hyperlogs in \( \leq n-2 \) variables.
		
		\begin{proof}
			Going from \( A_{i-1,j}^n \) to \( A_{i,j}^n \) requires a single transposition swapping positions \( i-1 \) and \( i \).  Similarly, going from \( A_{i,j}^n \) to \( A_{i,j+1}^n \) requires a single transposition swapping positions \( j \) and \( j+1 \).  So by \autoref{cor:transpotition} the result follows.
		\end{proof}
	\end{Lem}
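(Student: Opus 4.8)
The plan is to deduce this immediately from \autoref{cor:transpotition} once the relevant permutation is identified, so the only real work is bookkeeping with the definition of \( A_{i,j}^n \). First I would write \( A_{i,j}^n = [a_0 \mid b_1, \ldots, b_n \dblslash x \mid a_{n+1}] \), where the word \( (b_1, \ldots, b_n) \) has \( a_1 \) in position \( i \), \( a_2 \) in position \( j \), and \( a_3, a_4, \ldots, a_n \) filling the remaining \( n-2 \) positions in their natural order. Since \( (b_1, \ldots, b_n) \) is just a reordering of the generic variables \( a_1, \ldots, a_n \), this word is itself a generic tuple to which \autoref{cor:transpotition} applies.

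Next I would compare \( A_{i-1,j}^n \) with \( A_{i,j}^n \). Because \( i-1 < i < j \), neither position \( i-1 \) nor position \( i \) is the ``\( a_2 \)-slot'', so in passing from \( A_{i,j}^n \) to \( A_{i-1,j}^n \) the entry \( a_1 \) (at position \( i \)) and the filler sitting at position \( i-1 \) are simply interchanged, while every other entry is unchanged. Hence \( A_{i-1,j}^n = [a_0 \mid \sigma \cdot (b_1, \ldots, b_n) \dblslash x \mid a_{n+1}] \) for \( \sigma \) the adjacent transposition \( (i-1\ i) \). Applying \autoref{cor:transpotition} with this \( \sigma \), and using \( \sgn(\sigma) = -1 \), gives that
\[
[a_0 \mid \sigma \cdot (b_1, \ldots, b_n) \dblslash x \mid a_{n+1}] - \sgn(\sigma) [a_0 \mid b_1, \ldots, b_n \dblslash x \mid a_{n+1}] = A_{i-1,j}^n + A_{i,j}^n
\]
is an explicit sum of hyperlogs in \( \leq n-2 \) variables; explicitness is inherited since \autoref{cor:transpotition} is assembled, via \autoref{prop:transposition}, from the explicit \( D \)-operators of \autoref{def:operatorD}. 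The second claim is the same argument: comparing \( A_{i,j}^n \) with \( A_{i,j+1}^n \), positions \( j \) and \( j+1 \) are both distinct from the ``\( a_1 \)-slot'' \( i \) (as \( i < j < j+1 \)), so the two words differ only by the transposition \( (j\ j+1) \) of the entries \( a_2 \) and an adjacent filler, and \autoref{cor:transpotition} again yields \( A_{i,j}^n + A_{i,j+1}^n \) as an explicit sum in \( \leq n-2 \) variables.

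There is no genuine obstacle: the substance was already carried out in \autoref{cor:transpotition}. The only points needing care are the edge cases hidden in the notation — one needs \( 2 \leq i \) so that position \( i-1 \) exists for the first claim, and \( j \leq n-1 \) so that position \( j+1 \) exists for the second — together with the observation that in each case the position being swapped with the \( a_1 \)- or \( a_2 \)-slot holds a genuine filler variable \( a_k \) with \( k \geq 3 \), which holds precisely because \( i < j \). The genericity conditions needed to run the underlying \( D \)-operators (coming from \autoref{prop:switchx} and \autoref{lem:integralbounddifference}) are inherited, since \( a_0, \ldots, a_{n+1}, x \) stay in generic position throughout.
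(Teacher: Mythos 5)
Your proposal is correct and follows essentially the same route as the paper: identify \( A_{i-1,j}^n + A_{i,j}^n \) (resp.\ \( A_{i,j}^n + A_{i,j+1}^n \)) as a pair of hyperlogs differing by the single adjacent transposition of positions \( i-1, i \) (resp.\ \( j, j+1 \)) and invoke \autoref{cor:transpotition}. The extra bookkeeping you supply (which filler sits in the swapped slot, the range conditions on \( i \) and \( j \)) is just a more detailed spelling-out of the paper's one-line argument.
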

	
	\begin{Def}
		Write \( R^n(i-1, j \mid i,j) \) for the relation in \autoref{lem:ai-1jij}, above, expressing \( A^n_{i-1,j} + A^n_{i,j} \) as a sum in \( \leq n-2 \) variables.  Also write \( R^n(i,j \mid i,j+1) \) for the relation expressing \( A^n_{i,j} + A^n_{i,j+1} \) as a sum in \( \leq n-2 \) variables.
	\end{Def}
	
	At this point Dan considers some remarkable combination of \( R^n \)'s with certain coefficients \( c^n(\argdash) \), and claims (without proof) that from this one deduces a reduction formula.  I want to develop this sum in a step-by-step manner, and fill in the missing proofs. \medskip
	
	Consider the shuffle product \( a_1 a_2 \shuffle a_3 \cdots a_n \).  Each term of this is a word of length \( n \) where \( a_1 \) and \( a_2 \) occupy certain positions, and the string \( a_3, a_4, \ldots, a_n \) covers the remaining positions in order.  Therefore each term of the shuffle product is \( A_{i,j}^n \) for some \( i,j \).  Moreover, since 1 always occurs at a position before 2, we have \( i < j \).  Otherwise there is complete freedom to choose \( i \) and \( j \) between 1 and \( n \).  Therefore
	\[
	[a_0 \mid a_1 a_2 \shuffle a_3 a_4 \cdots a_n \dblslash x \mid a_{n+1}] = \sum_{1 \leq i < j \leq n} A_{i,j}^n \, .
	\]
	
	Now sum in the following order to get
	\[
	\sum_{1 \leq i < j \leq n} A_{i,j}^n = \sum_{j = 2}^{n} \sum_{i = 1}^{j-1} A_{i,j}^n \, .
	\]
	When \( j \) is odd, the inner sum \( \sum_{i=1}^{j-1} A_{i,j}^n \) can be written as
	\[
	\sum_{\substack{i=1 \\ \text{\( i \) even}}}^{j-1} (A_{i-1,j}^n + A_{i,j}^n) = \sum_{\substack{i = 1 \\ \text{\( i \) even}}}^{j-1} R^n(i-1,j \mid i, j) \, .
	\]
	
	When \( j \) is even, the inner sum \( \sum_{i=1}^{j-1} A_{i,j}^n \) can be written as
	\[
	A_{1,j} + \sum_{\substack{i = 3 \\ \text{\( i \) odd}}}^{j-1} (A_{i-1,j}^n + A_{i,j}^n) = A_{1,j}^n + \sum_{\substack{i = 3 \\ \text{\( i \) odd}}}^{j-1} R(i-1, j \mid i, j)^n \, .
	\]
	
	For convenience we want to sum over the full range \( i = 2, \ldots, j-1 \), including all even and odd indices, but this will introduce spurious extra terms.  To fix this, introduce coefficients \( c^n(i-1, j, \mid i,j) \) corresponding to the relation \( R^n(i-1,j \mid i,j) \).  When \( j \) is odd we need the even terms to live, so impose \( c^n(i-1, j \mid i, j) = 1 \) when \( i \) even and \( j \) odd, and \( c^n(i-1,j \mid i,j) = 0 \) when \( i \) odd and \( j \) odd.  When \( j \) is even, we need the odd terms to live, so impose \( c^n(i-1,j \mid i,j) = 1 \) when \( i \) odd and \( j \) even, and \( c^n(i-1,j \mid i,j) = 0 \) when \( i \) even and \( j \) even.  This can be summarised by saying
	\[
	c^n(i -1, j \mid i, j) = \begin{cases} 1 & \text{if \( i - j \) odd} \\ 0 & \text{otherwise,} \end{cases}
	\]
	in accordance with Dan's definition.  (We write \( c^n \) rather than just \( c \) because a later extension of \( c \) will explicitly depend on \( n \).) \medskip
	
	Plugging these into the sum above, we find that
	\begin{align*}
	&  [a_0 \mid a_1 a_2 \shuffle a_3 a_4 \cdots a_n \dblslash x \mid a_{n+1}] = \\
	&  \sum_{2 \leq i < j \leq n} c(i-1,j \mid i,j) R^n(i-1,j \mid i,j) + \sum_{\substack{j = 2 \\ \text{\( j \) even}}}^{n} A_{1,j}^n \, .
	\end{align*}
	
	Now consider the leftover terms \( \sum_{\text{\( j=2 \), \( j \) even}}^n A_{1,j}^n \).  Observe that we can write the following equality
	\begin{align*}
	A_{1,j}^n &= (A_{1,j}^n + A_{1,j-1}^n) - (A_{1,j-1}^n + A_{1,j-2}^n) + A_{1,j-2}^n \\
	&= R^n(1,j-1 \mid 1,j) - R^n(1,j-2 \mid 1, j-1) + A_{1,j-2}^n \\
	\shortintertext{ and by iterating,}
	&= R^n(1,j-1 \mid 1,j) - R^n(1,j-2 \mid 1, j-1) + {} \\
	& \hspace{2em} {} + R^n(1,j-3 \mid 1,j-2) - R^n(1,j-4 \mid 1, j-3) + A_{1,j-4}^n \, .
	\end{align*}
	This means we can eliminate \( A_{1,j}^n \) in favour of \( A_{1,j-2}^n \) and some relations \( R^n \).  By iterating this, we can push this as far as we want, as follows.
	
	\begin{Lem} \label{lem:shuffletoA1n} 
		For any even \( 2 \leq m \leq j-2 \), we have
		\[
		A_{1,j}^n = \sum_{\substack{k=m \\ \text{\( k \) even}}}^{j-2} (R^n(1,k+1\mid1,k+2) - R^n(1,k\mid1,k+1)) + A_{1,m}^n \, .
		\]
		\begin{proof}
			Certainly the result is true for \( m = j-2 \), by the observation preceding this lemma.
			
			Now suppose the result holds for \( m \).  Then for \( m -2 \) we have
			\begin{align*}
			& \sum_{\substack{k = m-2 \\ \text{\( m \) even}}}^{j-2} (R^n(1,k+1\mid1,k+2) - R^n(1,k\mid1,k+1)) \\
			& = \sum_{\substack{k = m \\ \text{\( m \) even}}}^{j-2} (R^n(1,k+1\mid1,k+2) - R^n(1,k\mid1,k+1)) + {} \\
			& \hspace{2em} + (R^n(1,m-1\mid1,m) - R^n(1,m-2\mid1,m-1)) \, ,
			\end{align*}
			which by the induction assumption equals
			\begin{align*}
			& = A_{1,j}^n - A_{1,m}^n + (R^n(1,m-1\mid1,m) - R^n(1,m-2\mid1,m-1)) \\
			&= A_{1,j}^n - A_{1,m}^n + ((A_{1,m-1}^n + A_{1,m}^n) - (A_{1,m-2}^n + A_{1,m-1}^n)) \\
			&= A_{1,j}^n - A_{1,m-2}^n \, .
			\end{align*}
			So the result holds for \( m - 2 \) also.
		\end{proof}
	\end{Lem}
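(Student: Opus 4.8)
The plan is a downward induction on the even parameter $m$, starting from the base case $m = j-2$ and stepping down by $2$ until $m = 2$; note that ``$m$ even, $2 \le m \le j-2$'' forces $j$ to be even, which is exactly the situation of the leftover terms $A^n_{1,j}$ that we wish to eliminate.

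For the base case $m = j-2$ the sum on the right collapses to its single summand at $k = j-2$, so the asserted identity becomes
\[
A_{1,j}^n = R^n(1,j-1\mid1,j) - R^n(1,j-2\mid1,j-1) + A_{1,j-2}^n \, ,
\]
and this is precisely the computation displayed immediately before the lemma: one writes $A_{1,j}^n = (A_{1,j}^n + A_{1,j-1}^n) - (A_{1,j-1}^n + A_{1,j-2}^n) + A_{1,j-2}^n$ and identifies the first two bracketed combinations as $R^n(1,j-1\mid1,j)$ and $R^n(1,j-2\mid1,j-1)$ using \autoref{lem:ai-1jij}.

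For the inductive step I would assume the formula for some even $m$ with $4 \le m \le j-2$ and deduce it for $m-2$. Peeling off the new summand at $k = m-2$ rewrites $\sum_{\substack{k=m-2 \\ k \text{ even}}}^{j-2}(\cdots)$ as $\sum_{\substack{k=m \\ k \text{ even}}}^{j-2}(\cdots) + \bigl(R^n(1,m-1\mid1,m) - R^n(1,m-2\mid1,m-1)\bigr)$; by the induction hypothesis (after rearranging the case-$m$ identity) the first sum equals $A_{1,j}^n - A_{1,m}^n$, while unwinding the definition of $R^n$ from \autoref{lem:ai-1jij} gives $R^n(1,m-1\mid1,m) - R^n(1,m-2\mid1,m-1) = (A_{1,m-1}^n + A_{1,m}^n) - (A_{1,m-2}^n + A_{1,m-1}^n) = A_{1,m}^n - A_{1,m-2}^n$. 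Adding these, the right-hand side telescopes to $A_{1,j}^n - A_{1,m-2}^n$, and rearranging yields the formula for $m-2$.

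I do not expect a real obstacle: the heart of the argument is the one-line telescoping of the $R^n$'s, and the only thing requiring genuine care is the index and parity bookkeeping — verifying that the base-case sum is a single term, that $m-2 \ge 2$ throughout the step, and that the $R^n$'s produced by peeling off a summand are exactly the ones named in \autoref{lem:ai-1jij}. The one conceptual point to keep in mind is that ``$R^n(1,k\mid1,k+1)$'' denotes simultaneously the combination $A^n_{1,k} + A^n_{1,k+1}$ and the explicit $\le n-2$-variable expression it equals in $\Hbl_n(E)$; the telescoping is carried out with the former reading, and since we work modulo products in $\Hbl_n(E)$ throughout, no separate tracking of lower-depth contributions is needed.
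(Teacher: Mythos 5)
Your proposal is correct and follows essentially the same route as the paper: the base case $m = j-2$ is the displayed computation preceding the lemma, and the downward induction from $m$ to $m-2$ proceeds by peeling off the $k = m-2$ summand and unwinding $R^n(1,m-1\mid 1,m) - R^n(1,m-2\mid 1,m-1) = A_{1,m}^n - A_{1,m-2}^n$, exactly as in the paper's telescoping argument. No gaps.
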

	
	In particular, for \( m = 2 \), we obtain
	\[
	A_{1,j}^n = \sum_{\substack{k = 2 \\ \text{\( k \) even}}}^{j-2} (R^n(1,k+1\mid1,k+2) - R^n(1,k\mid1,k+1)) + A_{1,2}^n \, ,
	\]
	and we can use this to establish the following result.
	
	\begin{Lem}\label{lem:sumofleftover}
		The sum of the leftover terms is given by
		\begin{align*}
		\sum_{\substack{j = 2 \\ \text{\( j \) even}}}^{n} A_{1,j}^n &= \floor{n/2} A_{1,2}^n + {} \\
		& + \sum_{\substack{j = 2 \\ \text{\( j \) even }}}^{n-2} (\floor{n/2} - j/2)(R^n(1,j+1\mid1,j+2) - R^n(1,j\mid1,j+1)) \, .
		\end{align*}
		
		\begin{proof}
			We may use the above result to give an expression for \( A_{1,j}^n \), and sum as follows
			\begin{align*}
			\sum_{\substack{j = 2 \\ \text{\( j \) even }}}^n A_{i,j} &= \sum_{\substack{j = 2 \\ \text{\( j \) even}}}^n \bigg( A_{1,2}^n +  \sum_{\substack{k = 2 \\ \text{\( k \) even}}}^{j-2} (R^n(1,k+1\mid1,k+2) - R^n(1,k\mid1,k-1))\bigg) \\
			&= \floor{n/2} A_{1,2}^n + \sum_{\substack{j = 2 \\ \text{\( j \) even}}}^n \sum_{\substack{k = 2 \\ \text{\( k \) even}}}^{j-2} (R^n(1,k+1\mid1,k+2) - R^n(1,k\mid1,k+1)) \, .
			\end{align*}
			Now swap the order of summation, to obtain
			\begin{align*}
			&= \floor{n/2} A_{1,2}^n + \sum_{\substack{k = 2 \\ \text{\( k \) even}}}^{n-2} \sum_{\substack{j = k+2 \\ \text{\( j \) even}}}^{n} (R^n(1,k+1\mid1,k+2) - R^n(1,k\mid1,k+1)) \, .
			\end{align*}
			Since the summand does not depend on the index of the inner sum, we just obtain a multiple of it based on the number of terms summed.  In this case we have \( \floor{n/2} - k/2 \) terms, so we get
			\begin{align*}
			&= \floor{n/2} A_{1,2}^n + \sum_{\substack{k = 2 \\ \text{\( k \) even}}}^{n-2} (\floor{n/2} - k/2)(R^n(1,k+1\mid1,k+2) - R^n(1,k\mid1,k+1)) \, .
			\end{align*}
			Finally, change the summation index from \( k \) to \( j \) to obtain the result.
		\end{proof}
	\end{Lem}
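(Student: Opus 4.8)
The plan is to feed the $m=2$ specialisation of \autoref{lem:shuffletoA1n} into the left-hand sum and then reorganise the resulting double sum. For each even $j$ with $2 \leq j \leq n$, \autoref{lem:shuffletoA1n} (with the convention that the empty sum vanishes, covering the degenerate case $j=2$) gives
\[
A_{1,j}^n = A_{1,2}^n + \sum_{\substack{k = 2 \\ \text{\( k \) even}}}^{j-2} \big(R^n(1,k+1\mid1,k+2) - R^n(1,k\mid1,k+1)\big) \, .
\]
Summing this identity over all even $j$ in $\{2,3,\ldots,n\}$ therefore produces one copy of $A_{1,2}^n$ for each such $j$. Since there are exactly $\floor{n/2}$ even integers in that range (whether $n$ is even or odd), the constant contributions collect into $\floor{n/2} A_{1,2}^n$, which is the first term on the right-hand side.

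It then remains to evaluate the double sum $\sum_{j}\sum_{k}\big(R^n(1,k+1\mid1,k+2) - R^n(1,k\mid1,k+1)\big)$, where $j$ ranges over even integers with $2 \leq j \leq n$ and, for each $j$, the index $k$ ranges over even integers with $2 \leq k \leq j-2$. I would swap the order of summation, so that $k$ runs over even integers with $2 \leq k \leq n-2$ and, for each $k$, the index $j$ runs over even integers with $k+2 \leq j \leq n$. The summand depends only on $k$, so the inner sum merely multiplies it by the number of admissible $j$; a direct count shows this number is $\floor{n/2} - k/2$. Renaming the summation index $k$ as $j$ then reproduces exactly the claimed expression.

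The only point needing real care — and the place where an off-by-one slip would be most tempting — is the bookkeeping in these parity-restricted ranges: verifying that $\{2,\ldots,n\}$ contains $\floor{n/2}$ even numbers in both parity cases for $n$, and that the even $j$ with $k+2 \leq j \leq n$ number precisely $\floor{n/2} - k/2$ when $k$ is even. Both are elementary (the largest even integer not exceeding $n$ is $2\floor{n/2}$, so the latter count is $(2\floor{n/2} - (k+2))/2 + 1 = \floor{n/2} - k/2$), so beyond keeping the indices straight I expect no genuine obstacle.
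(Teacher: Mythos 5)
Your proposal is correct and follows essentially the same route as the paper: specialise \autoref{lem:shuffletoA1n} to \( m = 2 \), sum over even \( j \), swap the order of summation, and count the inner sum as \( \floor{n/2} - k/2 \) copies of a summand independent of \( j \). Your explicit verification of the parity-restricted counts (and the remark that \( j = 2 \) contributes only the trivial term \( A_{1,2}^n \)) is slightly more careful than the paper's write-up, but the argument is the same.
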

	
	Here Dan also wishes to sum over the full range \( j = 2, \ldots, n-2 \).  This is more straightforward to do, since we can break the sum up and reindex it as follows.
	\begin{align*}
	& \sum_{\substack{j = 2 \\ \text{\( j \) even}}}^{n-2} (\floor{n/2} - j/2)(R^n(1,j+1\mid1,j+2) - R^n(1,j\mid1,j+1)) \\
	&=  \sum_{\substack{j = 2 \\ \text{\( j \) even}}}^{n-2} (\floor{n/2} - j/2)R^n(1,j+1\mid1,j+2) - \sum_{\substack{j = 2 \\ \text{\( j \) even}}}^{n-2} (\floor{n/2} - j/2)R^n(1,j\mid1,j+1) \, .
	\end{align*}
	Now put \( j \mapsto j-1 \) in the first sum.  The range changes to \( j = 3 \) to \( n-1 \), \( j \) odd, giving
	\begin{align*}
	&=  \sum_{\substack{j = 3 \\ \text{\( j \) odd}}}^{n-1} (\floor{n/2} - (j-1)/2)R^n(1,j\mid1,j+1) - \sum_{\substack{j = 2 \\ \text{\( j \) even}}}^{n-2} (\floor{n/2} - j/2)R^n(1,j\mid1,j+1) \, .
	\end{align*}
	Observe that when \( j \) is odd, \( (j-1)/2 = \floor{j/2} \).  And when \( j \) is even, \( j/2 = \floor{j/2} \).  Both sums can be combined to give
	\begin{align*}
	&= -\sum_{j=2}^{n-1} (-1)^j (\floor{n/2} - \floor{j/2}) R^n(1,j \mid 1,j+1) \, .
	\end{align*}
	We can then set
	\[
	c^n(1, j \mid 1, j+1) = (-1)^j (\floor{n/2} - \floor{j/2}) \, ,
	\]
	in accordance with Dan.  (Writing \( c^n \) rather than just \( c \) to emphasis the dependence on \( n \).)
	
	Overall, we have
	\begin{align*}
	& [a_0 \mid a_1 a_2 \shuffle a_3 a_4 \cdots a_n \dblslash x \mid a_{n+1}] \\
	&= \sum_{2 \leq i < j \leq n} c^n(i-1,j \mid i,j) R(i-1,j \mid i,j) + \floor{n/2} A_{1,2}^n + {} \\
	& \hspace{2em} {} - \sum_{j=2}^{n-1} c^n(1,j\mid1,j+1) R^n(1,j \mid 1,j+1) \, .
	\end{align*}
	
	By rearranging this, we therefore obtain the following theorem
	\begin{Thm}\label{thm:reduction}
		The following equality holds
		\begin{align*}
		& \hspace{-5em} \floor{n/2} [a_0 \mid a_1, \ldots, a_n \dblslash x \mid a_{n+1}] = \\
		& - \sum_{2\leq i < j \leq n} c^n(i-1,j \mid i,j) R(i-1,j \mid i,j) + {} \\
		& + \sum_{2 \leq j \leq n-1} c^n(1,j \mid 1,j+1) R^n(1,j \mid 1,j+1) + {} \\
		& + [a_0 \mid a_1 a_2 \shuffle a_3 a_4 \cdots a_n \dblslash x \mid a_{n+1}] \, .
		\end{align*}
		And in particular for \( n \geq 3 \),
		\[
		[a_0 \mid a_1, \ldots, a_n \dblslash x \mid a_{n+1}]
		\]
		is explicitly given as a sum of hyperlogs in \( \leq n-2 \) variables, modulo products.
	\end{Thm}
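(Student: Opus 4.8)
The plan is to obtain \autoref{thm:reduction} as a direct rearrangement of the expansion of \( [a_0 \mid a_1 a_2 \shuffle a_3 \cdots a_n \dblslash x \mid a_{n+1}] \) assembled in the paragraphs above. Starting from
\[
[a_0 \mid a_1 a_2 \shuffle a_3 \cdots a_n \dblslash x \mid a_{n+1}] = \sum_{1 \leq i < j \leq n} A_{i,j}^n \, ,
\]
I would reorganise the right-hand side by summing in the order \( \sum_{j=2}^{n} \sum_{i=1}^{j-1} \), pairing consecutive inner terms \( A_{i-1,j}^n + A_{i,j}^n \) into the relations \( R^n(i-1,j\mid i,j) \) supplied by \autoref{lem:ai-1jij}, and absorbing the parity of \( j \) into the coefficients \( c^n(i-1,j\mid i,j) \). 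The leftover terms \( \sum_{j \text{ even}} A_{1,j}^n \) are rewritten via \autoref{lem:shuffletoA1n} and \autoref{lem:sumofleftover} as \( \floor{n/2} A_{1,2}^n \) plus a further combination of the \( R^n(1,j\mid1,j+1) \)'s, which after the reindexing \( j \mapsto j-1 \) in one of the two pieces collapses to \( -\sum_{j=2}^{n-1} c^n(1,j\mid1,j+1) R^n(1,j\mid1,j+1) \) with \( c^n(1,j\mid1,j+1) = (-1)^j(\floor{n/2} - \floor{j/2}) \). This is precisely the ``Overall'' identity displayed just before the theorem.

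To finish, I would note that by definition \( A_{1,2}^n = [a_0 \mid a_1, a_2, a_3, \ldots, a_n \dblslash x \mid a_{n+1}] \), since \( a_1 \) sits in position \( 1 \) and \( a_2 \) in position \( 2 \). Solving the ``Overall'' identity for \( \floor{n/2} A_{1,2}^n \) then gives the displayed equality in the statement. For the ``in particular'' claim: for \( n \geq 3 \) the word \( a_1 a_2 \shuffle a_3 \cdots a_n \) is a shuffle product of a weight-\(2\) factor with a weight-\((n-2)\) factor, both of positive weight, so \( [a_0 \mid a_1 a_2 \shuffle a_3 \cdots a_n \dblslash x \mid a_{n+1}] \) lies in \( \mathcal{A}_{>0}\cdot\mathcal{A}_{>0} \) and hence vanishes in \( \Hbl_n(E) \); every \( R^n(\argdash) \) is, by \autoref{lem:ai-1jij} and \autoref{cor:transpotition}, an explicit sum of hyperlogs in \( \leq n-2 \) variables; and \( \floor{n/2} \neq 0 \). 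Dividing by \( \floor{n/2} \) over \( \Q \) expresses \( [a_0 \mid a_1, \ldots, a_n \dblslash x \mid a_{n+1}] \) explicitly as a sum of hyperlogs in \( \leq n-2 \) variables, modulo products.

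The content is essentially bookkeeping: the genuine inputs (the transposition reduction \autoref{cor:transpotition}, the parity count \autoref{prop:signof2shufflen-2}, and the telescoping \autoref{lem:shuffletoA1n}) are already in place, so there is no conceptual obstacle. The one point needing care is the index juggling when the two partial sums over even \( j \) and over odd \( j \) are merged into the single sum \( -\sum_{j=2}^{n-1}(-1)^j(\floor{n/2}-\floor{j/2})R^n(1,j\mid1,j+1) \): one must check that the shifted ranges line up and that the resulting coefficient agrees, on each parity of \( j \), with Dan's \( c^n(1,j\mid1,j+1) \). Once that is verified, the theorem follows by rearrangement.
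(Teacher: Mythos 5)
Your proposal is correct and follows essentially the same route as the paper: the theorem there is obtained exactly as you describe, by rearranging the expansion of \( [a_0 \mid a_1 a_2 \shuffle a_3 \cdots a_n \dblslash x \mid a_{n+1}] = \sum_{1\leq i<j\leq n} A^n_{i,j} \) into the \( R^n \)-relations with the coefficients \( c^n \), using \autoref{lem:shuffletoA1n} and \autoref{lem:sumofleftover} for the leftover \( A^n_{1,j} \) terms, and then noting that the shuffle term vanishes modulo products for \( n \geq 3 \) so one may divide by \( \floor{n/2} \). Your parity bookkeeping and the identification \( A^n_{1,2} = [a_0 \mid a_1, \ldots, a_n \dblslash x \mid a_{n+1}] \) match the paper's derivation, so there is nothing to add.
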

	
	This completes the sixth and final step of the reduction, as outlined in \autoref{sec:overview}.  As a final punchline, we have the following corollary.
	
	\begin{Cor}
		By setting \( x = \infty \), we get an expression for \( [a_0 \mid a_1, \ldots, a_n \mid a_{n+1}] \) as a sum of hyperlogs in \( \leq n-2 \) variables.
	\end{Cor}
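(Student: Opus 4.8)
The plan is to read off the corollary directly from \autoref{thm:reduction}, evaluated at \( x = \infty \). For \( n \geq 3 \), \autoref{thm:reduction} already expresses \( \floor{n/2}\,[a_0 \mid a_1, \ldots, a_n \dblslash x \mid a_{n+1}] \) --- and hence, dividing by \( \floor{n/2} \neq 0 \), the symbol \( [a_0 \mid a_1, \ldots, a_n \dblslash x \mid a_{n+1}] \) itself --- as an explicit \( \Q \)-linear combination of generalised hyperlog symbols each in \( \leq n-2 \) variables, modulo products. So the only work is to specialise \( x = \infty \) and to see that each term on the right can then be read as an \emph{ordinary} hyperlog, still in \( \leq n-2 \) variables.

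First I would handle the left-hand side: by the first clause of \autoref{def:hasi}, setting \( x = \infty \) turns \( [a_0 \mid a_1, \ldots, a_n \dblslash \infty \mid a_{n+1}] \) into precisely the ordinary multiple-logarithm symbol \( [a_0 \mid a_1, \ldots, a_n \mid a_{n+1}] \), which is the object to be reduced. Then, on the right-hand side, each summand is a symbol \( [b_0 \mid b_1, \ldots, b_n \dblslash y \mid b_{n+1}] \); regardless of the value of \( y \), \autoref{def:hasi} expresses it as an ordinary hyperlog --- equal to \( [b_0 \mid b_1, \ldots, b_n \mid b_{n+1}] \) if \( y = \infty \), and to \( [\tfrac{1}{b_0-y} \mid \tfrac{1}{b_1-y}, \ldots, \tfrac{1}{b_n-y} \mid \tfrac{1}{b_{n+1}-y}] \) if \( y \) is finite (cf.~\autoref{prop:hasi}).

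It remains to observe that this rewriting does not increase the number of variables. The only nontrivial case is \( y \) finite, where the slot entries are transported by the Möbius map \( z \mapsto \tfrac{1}{z-y} \) of \( \Proj^1(\C) \); since this map is a bijection, it sends equal slot values to equal slot values and distinct to distinct, so the number of distinct values among the slots --- at most \( n-2 \) before --- is unchanged. (If one also wishes to normalise the integration bounds back to \( 0 \) and \( 1 \), one composes with a further affine change of variables, which by \autoref{rem:affine} leaves the symbol unchanged and, being a bijection, again preserves the count.) Combining everything, the right-hand side of \autoref{thm:reduction} at \( x = \infty \) is an explicit sum of ordinary hyperlogs in \( \leq n-2 \) variables equal to \( [a_0 \mid a_1, \ldots, a_n \mid a_{n+1}] \) modulo products, which is the claim. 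There is no real obstacle here; the only point needing a moment's thought is the bijectivity bookkeeping in this last step.
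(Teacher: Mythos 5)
Your proposal is correct and matches the paper's (implicit) argument: the corollary is stated without proof precisely because it is the specialisation \( x = \infty \) of \autoref{thm:reduction}, with \autoref{def:hasi} identifying the left-hand side as the ordinary hyperlog and each right-hand term as an ordinary hyperlog in the same number (\( \leq n-2 \)) of variables. Your extra bookkeeping about the M\"obius map \( z \mapsto \tfrac{1}{z-y} \) preserving the variable count is the same observation the paper makes later in \autoref{sec:asmplccr} when converting these terms to MPLs with cross-ratio arguments.
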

	
	\subsubsection{\texorpdfstring{Efficient approach for \( n \) odd}{Efficient approach for n odd}}
	\label{sec:structured_oddn}
	
	Dan remarks that when \( n \) is odd, one can obtain an even simpler expression for this reduction.  This is done as follows.
	
	\begin{Lem}
		Let \( S(1,n-1) \) be the set of \( (1, n-1) \) shuffles, which can be identified with the terms in the shuffle product \( a_1 \shuffle a_2 a_3 \cdots a_n  \).  Then
		\[
		\sum_{\sigma \in S(1,n-1)} \sgn(\sigma) = 1 \, .
		\]
		
		\begin{proof}
			Each term in \( S \) is completely determined by the position of \( a_1 \).  If \( a_1 \) is in position \( j \), then it takes \( j-1 \) swaps to put the permutation into the original order.  Hence
			\[
			\sum_{\sigma \in S(1,n-1)} \sgn(\sigma) = \sum_{j = 1}^n (-1)^{j-1} \, .
			\]
			Since \( n = 2k + 1 \) is odd, we can break this up into
			\begin{align*}
			{} = \sum_{\substack{j = 1 \\ \text{\( j \) even}}} (-1) + \sum_{\substack{j = 1 \\ \text{\( j \) odd}}} 1
			 = j \cdot (-1) + (j+1) \cdot 1 = 1 \, .
			\end{align*}
			This completes the proof.
		\end{proof}
	\end{Lem}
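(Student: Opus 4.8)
The plan is to reduce the sum over shuffles to an elementary alternating sum over positions, in exactly the spirit of the proof of \autoref{prop:signof2shufflen-2}, but simpler: here we are only inserting a single letter \( a_1 \) into the fixed word \( a_2 a_3 \cdots a_n \), so the combinatorics is one-dimensional.

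First I would observe that a \( (1,n-1) \)-shuffle \( \sigma \) is completely determined by the position \( j \in \Set{1, \ldots, n} \) into which \( a_1 \) is sent, since the remaining letters \( a_2, \ldots, a_n \) are then forced to occur in their original relative order. Write \( \sigma_j \) for this shuffle. Passing from \( a_1 a_2 \cdots a_n \) to the word with \( a_1 \) in slot \( j \) is achieved by sliding \( a_1 \) one slot to the right, \( j - 1 \) times; each slide is an adjacent transposition, so \( \sigma_j \) is a product of \( j - 1 \) transpositions and \( \sgn(\sigma_j) = (-1)^{j-1} \). (Equivalently, \( \sigma_j \) is a \( j \)-cycle.)

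Summing over the bijection \( \sigma \leftrightarrow j \) just described then gives \( \sum_{\sigma \in S(1,n-1)} \sgn(\sigma) = \sum_{j=1}^{n} (-1)^{j-1} \). Writing \( n = 2k+1 \), there are \( k+1 \) odd values of \( j \) in \( \Set{1, \ldots, n} \), each contributing \( +1 \), and \( k \) even values, each contributing \( -1 \); hence the sum collapses to \( 1 \). This is the only point at which the hypothesis that \( n \) is odd enters — for even \( n \) the identical count gives \( 0 \) — which is consistent with the general pattern \( \sum_{\sigma \in S(p, n-p)} \sgn(\sigma) = \binom{\floor{n/2}}{\floor{p/2}} \) implicit both here and in \autoref{prop:signof2shufflen-2}.

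The only subtlety, and the main (small) obstacle, is pinning down the sign \( \sgn(\sigma_j) = (-1)^{j-1} \) cleanly, i.e. correctly counting the adjacent transpositions (or inversions) produced when \( a_1 \) moves from position \( 1 \) to position \( j \) while the other letters shift one place left; this requires fixing an indexing convention that matches the definition of \( S(1,n-1) \). Once that is done, the rest is the one-line alternating-sum evaluation above.
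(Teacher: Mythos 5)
Your proof is correct and follows essentially the same route as the paper: a \( (1,n-1) \)-shuffle is determined by the position \( j \) of \( a_1 \), has sign \( (-1)^{j-1} \) from the \( j-1 \) adjacent swaps, and the alternating sum \( \sum_{j=1}^{n}(-1)^{j-1} \) collapses to \( 1 \) for odd \( n = 2k+1 \). In fact your final count (\( k+1 \) odd positions contributing \( +1 \) against \( k \) even positions contributing \( -1 \)) is stated more cleanly than the paper's own wording, which garbles that step by writing \( j \) where \( k \) is meant.
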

	
	Now write \( [a_0 \mid \sigma \cdot (a_1, \ldots, a_n) \dblslash x \mid a_{n+1}] - \sgn(\sigma)[a_0 \mid a_1, \ldots, a_n \dblslash x \mid a_{n+1}] \) as a sum in \( \leq n - 2 \) variables, and sum over all \( \sigma \in S(1,n-1) \).  One obtains an analogue of \autoref{thm:reductionofhyperlog}.  Dan now combines terms differing by transpositions, to give the following explicit reduction.
	
	\begin{Def}
		Write
		\[
		A_{i}^n \coloneqq [a_0 \mid a_2, \ldots, a_{i}, \underbrace{a_1}_{\mathclap{\text{position \( i \) }}}, a_{i+1}, \ldots, a_n \dblslash x \mid a_{n+1}] \, .
		\]
		where position \( i \) is filled with \( a_1 \), and the remaining positions are filled with \( a_2, \ldots, a_n \) in this order.
	\end{Def}
	
	\begin{Lem}
		Consider the expression \( A_{i}^n + A_{i+1}^n \).  This can be expressed as an explicit sum of hyperlogs in \( \leq n - 2 \) variables.
		
		\begin{proof}
			Observe that \( A_{i}^n + A_{i+1}^n \) is a transposition, obtained by swapping positions \( i \) and \( i+1 \).  Since it is a transposition, it can be expressed as a sum in \( \leq n-2 \) variables using the \autoref{cor:transpotition} and the \( D \) operator.  So the result holds.
		\end{proof}
	\end{Lem}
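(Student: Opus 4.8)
The plan is to observe that $A_i^n$ and $A_{i+1}^n$ are two generalised hyperlogarithms whose argument strings differ by a single adjacent transposition, and then to feed this pair straight into the machinery already assembled (\autoref{prop:transposition}, \autoref{cor:transpotition}, and the $D$-operator of \autoref{def:operatorD}), all of which deliver reductions to $\leq n-2$ variables.

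First I would unpack the definition. By construction $A_i^n = [a_0 \mid a_2, \ldots, a_i, a_1, a_{i+1}, \ldots, a_n \dblslash x \mid a_{n+1}]$: the word is $a_2 a_3 \cdots a_n$ with $a_1$ inserted so that it occupies slot $i$; so slots $1,\ldots,i-1$ carry $a_2,\ldots,a_i$, slot $i$ carries $a_1$, and slots $i+1,\ldots,n$ carry $a_{i+1},\ldots,a_n$. Likewise $A_{i+1}^n$ has $a_1$ in slot $i+1$. Comparing slot-by-slot, the two words agree everywhere except in slots $i$ and $i+1$, where $A_i^n$ reads $(a_1, a_{i+1})$ and $A_{i+1}^n$ reads $(a_{i+1}, a_1)$. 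Hence the underlying word of $A_{i+1}^n$ is obtained from that of $A_i^n$ by the transposition $\tau = (i\ i{+}1)$ of two consecutive positions, so $A_i^n + A_{i+1}^n$ is exactly a combination of the shape treated in \autoref{prop:transposition}, with the two moving entries $a_1$ and $a_{i+1}$ lying in adjacent slots.

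Then I would conclude in one of two equivalent ways. Directly, \autoref{cor:transpotition} says that for any $\sigma \in S_n$ the expression $[a_0 \mid \sigma \cdot (a_1,\ldots,a_n) \dblslash x \mid a_{n+1}] - \sgn(\sigma)[a_0 \mid a_1,\ldots,a_n \dblslash x \mid a_{n+1}]$ is an explicit sum of hyperlogs in $\leq n-2$ variables; taking $\sigma = \tau$ (so $\sgn(\sigma) = -1$) turns this difference into the sum $A_i^n + A_{i+1}^n$ after the obvious relabelling. Alternatively, and more explicitly, one writes out \autoref{prop:transposition} as the signed combination of three $D$-terms $D(\argdash, i) - D(\argdash, j) + D(\argdash, i)$, each of which is by \autoref{def:operatorD} (resting on \autoref{prop:switchx}) an explicit sum of hyperlogs in $\leq n-2$ variables, and adds them. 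Either route proves the claim. I do not expect a genuine obstacle: the only bookkeeping to get right is that the two swapped entries genuinely sit in consecutive slots $i$ and $i+1$ — immediate from the definition of $A_i^n$ — and that the standing generic-position hypotheses on the $a_\bullet$, $x$ and the auxiliary bound needed for $D$ and \autoref{prop:switchx} remain in force, which they do throughout the reduction; the substantive content has already been established in \autoref{prop:transposition}, \autoref{cor:transpotition} and the construction of $D$.
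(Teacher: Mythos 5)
Your proposal is correct and follows essentially the same route as the paper: recognise that the words underlying \( A_i^n \) and \( A_{i+1}^n \) differ by the adjacent transposition of slots \( i \) and \( i+1 \), and then invoke \autoref{cor:transpotition} (equivalently the three-\( D \) decomposition of \autoref{prop:transposition}) to express the sum in \( \leq n-2 \) variables. The extra slot-by-slot bookkeeping you spell out is exactly the observation the paper leaves implicit, so there is nothing further to add.
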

	
	\begin{Def}
		Denote by \( R_{i}^n \) the relation expressing \( A_i^n + A_{i+1}^n \) as a sum in \( \leq n - 2 \) variables.
	\end{Def}
	
	Since each term in \( [a_0 \mid a_1 \shuffle a_2 a_3 \cdots a_n \dblslash x \mid a_{n+1}] \) is determined by the position of \( a_1 \), we obtain
	\[
	[a_0 \mid a_1 \shuffle a_2 a_3 \cdots a_n \dblslash x \mid a_{n+1}] = \sum_{i=1}^n A_{i}^n \, .
	\]
	Since \( n \) is odd, we may write this as
	\begin{align*}
	{} = A_{1}^n + \sum_{\substack{i = 2 \\ \text{\( i \) even}}}^{n} A_{i}^n + A_{i+1}^n 
	= A_{1}^n + \sum_{\substack{i = 2 \\ \text{\( i \) even}}}^{n} R_{i}^n {} 
	= A_{1}^n + \sum_{j = 1}^{\floor{n/2}} R_{2j}^n \, .
	\end{align*}
	
	By rearranging this, we obtain the following theorem.
	
	\begin{Thm}\label{thm:A12reduction}
		For odd \( n \), the following equality holds
		\begin{align*}
		[a_0 \mid a_1, \ldots, a_n \dblslash x \mid a_{n+1}] = \\
		& \hspace{-7em} [a_0 \mid a_1 \shuffle a_2 a_3 \cdots a_n \dblslash x \mid a_{n+1}] - \sum_{j=1}^{\floor{n/2}} R_{2j}^n \, .
		\end{align*}
		
		And in particular for odd \( n \geq 3 \),
		\[
		[a_0 \mid a_1, \ldots, a_n \dblslash x \mid a_{n+1}] \, ,
		\] is explicitly given as a sum of hyperlogs in \( \leq n - 2 \) variables, modulo products.
	\end{Thm}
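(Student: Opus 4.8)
The plan is to expand the shuffle term into its constituent words and then pair those words up so that a single summand is exactly the hyperlog we want to reduce. Each word in $a_1 \shuffle a_2 a_3 \cdots a_n$ is determined by the slot $i$ occupied by the letter $a_1$, with the remaining slots filled by $a_2, \ldots, a_n$ in order; this is precisely $A_i^n$. Hence $[a_0 \mid a_1 \shuffle a_2 a_3 \cdots a_n \dblslash x \mid a_{n+1}] = \sum_{i=1}^n A_i^n$, and the term $A_1^n$ is literally $[a_0 \mid a_1, a_2, \ldots, a_n \dblslash x \mid a_{n+1}]$.

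First I would split off $A_1^n$ and group the remaining $n-1$ terms as consecutive pairs $A_{2j}^n + A_{2j+1}^n$ for $j = 1, \ldots, \floor{n/2}$. Oddness of $n$ enters here and only here: since $n-1$ is even, the terms $A_2^n, \ldots, A_n^n$ pair off cleanly with nothing left over, so — unlike in the general-$n$ reduction of \autoref{thm:reduction} — no corrective coefficients $c^n$ are needed. Each pair $A_{2j}^n + A_{2j+1}^n$ differs by the transposition of slots $2j$ and $2j+1$, so by \autoref{cor:transpotition} (through the $D$ operator) it equals an explicit sum $R_{2j}^n$ of hyperlogs in $\leq n-2$ variables. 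Therefore $\sum_{i=1}^n A_i^n = A_1^n + \sum_{j=1}^{\floor{n/2}} R_{2j}^n$, and moving $A_1^n$ to the left-hand side gives the displayed identity.

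For the final assertion, observe that for $n \geq 2$ the shuffle term $[a_0 \mid a_1 \shuffle a_2 a_3 \cdots a_n \dblslash x \mid a_{n+1}]$ is, by the multiplication rule defining $\mathcal{A}(E)$, the product of $[a_0 \mid a_1 \dblslash x \mid a_{n+1}]$ and $[a_0 \mid a_2, \ldots, a_n \dblslash x \mid a_{n+1}]$, both of positive weight; since $\Hbl_n(E)$ is taken modulo products this term vanishes there, leaving $[a_0 \mid a_1, \ldots, a_n \dblslash x \mid a_{n+1}] = -\sum_{j=1}^{\floor{n/2}} R_{2j}^n$, an explicit sum of hyperlogs in $\leq n-2$ variables. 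There is no real obstacle in this argument — the substance was already absorbed into \autoref{cor:transpotition} and the lemma expressing $A_i^n + A_{i+1}^n$ as $R_i^n$; the only thing that demands care is the parity bookkeeping, namely checking that the pairing $A_1^n,\ (A_2^n+A_3^n),\ \ldots,\ (A_{n-1}^n+A_n^n)$ uses each $A_i^n$ exactly once when $n$ is odd, and confirming that $A_1^n$ is the target hyperlog itself rather than some permuted variant.
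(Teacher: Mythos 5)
Your proposal is correct and follows essentially the same route as the paper: expand the shuffle term as \( \sum_{i=1}^n A_i^n \), observe that \( A_1^n \) is the target hyperlog, pair the remaining terms (oddness of \( n \) making the pairing exact) into the transposition relations \( R_{2j}^n \), and kill the shuffle term modulo products for the final assertion. Nothing is missing.
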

	
	\subsection{Reduction of generalised hyperlogarithms to multiple polylogarithms}\label{sec:asmplccr}
	
	As a last addendum to the reduction procedure, we show how the hyperlogarithms it produces are written as MPLs of depth \( \leq n - 2 \), involving `coupled cross-ratio' arguments. \medskip
	
	We first recall the definition of the cross-ratio.
	
	\begin{Def}[Cross-ratio]\label{def:crossratio}
		Let \( a, b, c, d \in \Proj^1(\C) \) be four generic points.  The \emph{cross-ratio} \( \CR(a,b,c,d) \), which we may abbreviate as \( abcd \), of these points is defined by
		\[
			abcd = \CR(a,b,c,d) \coloneqq \frac{a-c}{a-d} \bigg/ \frac{b-c}{b-d} \, .
		\]
		If one of the points is \( \infty \in \Proj^1(\C) \), the arithmetic of infinity in the Riemann sphere gives correct results such as
		\[
			abc\infty = \CR(a,b,c,\infty) = \frac{a - c}{a - \infty} \bigg/ \frac{b - c}{b - \infty} = \frac{a - c}{b - c} \, .
		\]
	\end{Def}
	
	These type of arguments play a significant role in describing identities and functional equations between weight 4 MPL's \cite{gangl2015weight4mpl} and between 5 MPL's \cite{charlton2016identities}.  The deep reason for this is that cross-ratios provide coordinates on the moduli space \( \mathfrak{M}_{0,n} \) of \( n \) marked points; the connection between multiple polylogarithms, cross-ratio arguments, and \( \mathfrak{M}_{0,n} \) is discussed in Section 6 of \cite{brown2009multiple}. \medskip
	
	After applying Dan's reduction procedure, we will obtain a number of terms of the form
	\(
	[a_0 \mid a_1, \ldots, a_n \dblslash x \mid a_{n+1}] \, .
	\)
	which we want to recognise as some MPL \( I_{s_1,\ldots,s_k}(z_1,\ldots,z_k) \).  We do this by first converting to an ordinary hyperlogarithm, with \( x \rightsquigarrow \infty \) using \autoref{prop:hasi}.  We have
	\[
	[a_0 \mid a_1, \ldots, a_n \dblslash x \mid a_{n+1}]  = [\tfrac{1}{a_0 - x} \mid \tfrac{1}{a_1 - x}, \ldots, \tfrac{1}{a_n - x} \mid \tfrac{1}{a_{n+1} - x}] \, .
	\]
	This ordinary hyperlogarithm is invariant under affine transformation, so apply the translation \( t \mapsto t - \tfrac{1}{a_0 - x} \).  This sets the lower bound of integral to \( 0 \).  The other arguments change as follows
	\[
	\frac{1}{a_i - x} \mapsto \frac{1}{a_i - x} - \frac{1}{a_0 - x} = \frac{a_0 - a_i}{(a_i - x)(a_0 - x)} \, .
	\]
	Now apply the scaling \( t \mapsto t \tfrac{(a_{n+1} - x)(a_0 - x)}{a_0 - a_{n+1}} \), which sets the upper bound of the integral to \( 1 \).  The other arguments change to
	\begin{align*}
	\frac{a_0 - a_i}{(a_i - x)(a_0 - x)} &\mapsto \frac{a_0 - a_i}{(a_i - x)(a_0 - x)} \frac{(a_{n+1} - x)(a_0 - x)}{a_0 - a_{n+1}} \\
	&  = \frac{(a_0 - a_i)(a_{n+1} - x)}{(a_i - x)(a_0 - a_{n+1})} \\
	& \eqqcolon \CR(x, a_0, a_{n+1}, a_i) \, .
	\end{align*}
	Overall, we find that
	\begin{align*}
	 [a_0 \mid a_1, \ldots, a_n \dblslash x \mid a_{n+1}]
	= [0 \mid \CR(x, a_0, a_{n+1}, a_1), \ldots, \CR(x, a_0, a_{n+1}, a_n) \mid 1] \, .
	\end{align*}
	This can then be written as an MPL by way of \autoref{def:mpl}.
		
	In the reduction procedure, the number of variables is reduced from \( n \) to \( n-2 \) in each integral.  This means that at least two of the \( a_i \)'s above will equal \( a_0 \) when we apply this conversion.  In this situation the cross-ratio reduces to 0 (or indeed the argument itself will be identically 0 after the translation step).  This has the effect of giving an MPL of depth \( \leq n - 2 \). \medskip
	
	\subsubsection*{Coupled cross-ratio arguments} Finally, notice the cross-ratios which appear in the arguments of this MPL all start with the same 3 variables \( \CR(x, a_0, a_{n+1}, {} \mathop{\cdot} {}) \).  The cross-ratios are somehow `coupled' together, and so I refer to these as `\emph{coupled cross-ratio}' arguments.  The identities in \cite{gangl2015weight4mpl} and \cite{charlton2016identities} typically use coupled cross-ratio arguments to better highlight the symmetries involved.
	
	Since coupled cross-ratio arguments appear frequently, we employ the following shorthand notation when writing them.
	
	\begin{Not}[Shorthand for coupled cross-ratio arguments]\label{not:ccr}
		If the multiple polylogarithm \( I_{s_1,\ldots,s_k}(abcd_1, abcd_2, \ldots, abcd_k) \) has coupled cross-ratio arguments, we may employ the following shorthand to write the arguments
		\[
			I_{s_1,\ldots,s_k}(abcd_1d_2\cdots d_k) \coloneqq I_{s_1,\ldots,s_k}(abcd_1, abcd_2, \ldots, abcd_k) \, .
		\]
	\end{Not}
	
	\section{\texorpdfstring{Reduction of \( I_{1,1,1,1} \)}{Reduction of I\_1111}}\label{sec:dan4}
	
	\subsection{\texorpdfstring{Procedure when \( n = 4 \), correcting Dan's reduction of \( I_{1,1,1,1} \)}{Procedure when n = 4, correcting Dan's reduction of I\textunderscore{}1111}}
	
	In this section we will run the reduction method for \( n = 4 \), in order to correct the expression Dan gives for \( I_{1,1,1,1}(w, x, y, z) \), or more precisely for \( I(a; b, c, d, e; f) \).  We can obtain the reduction for \( I_{1,1,1,1}(w, y, x, z) \) by setting \( a = 0 \) and \( f = 1 \). \medskip
	
	To start the reduction procedure, we apply \autoref{thm:reduction}, with \( n = 4 \) to obtain
	\begin{align*}
	& 2 [a_0 \mid a_1, a_2, a_3, a_4 \dblslash x \mid a_5] \\
	& = (R^4(1, 2 \mid 1, 3) - R^4(1, 3, \mid 1, 4)) - (R^4(1, 3, \mid 2, 3) - R^4(2, 4 \mid 3, 4)) \, ,
	\end{align*}
	modulo products.
	
	Let us focus on the term \( R^4(1,2 \mid 1,3) \) now.  This is supposed to be the expression for \[
	A^4_{1,2} + A^4_{1,3} = [a_0 \mid a_1, a_2, a_3, a_4 \dblslash x \mid a_5] + [a_0 \mid a_1, a_3, a_2, a_4 \dblslash x \mid a_5] \, ,
	\]
	as a sum in \( \leq n-2 \) variables, using the \( D \) operator and \autoref{prop:transposition}.  By this, we have
	\begin{align*}\label{n4:transposition}
	A^4_{1,2}  + A^4_{1,3} &= D([a_0 \mid a_1, a_2, a_3, a_4 \dblslash x \mid a_5], 2) + {} \\
	& \hspace{2em} {} - D([a_0 \mid a_1, x, a_3, a_4 \dblslash a_2 \mid a_5], 3) + {} \numberthis \\
	& \hspace{2em} {} + D([a_0 \mid a_1, x, a_2, a_4 \dblslash a_3 \mid a_5], 2) \, .
	\end{align*}
	
	Now each \( D \) is an explicit sum in \( \leq n-2 \) variables, using \autoref{prop:switchx} and the operator \( B \).  Doing this for the first term gives
	\begin{align*}
	D([a_0 \mid a_1, a_2, a_3, a_4 \dblslash x \mid a_5], 2) &= B([a_0 \mid a_1, a_2, a_3, a_4 \dblslash x \mid a_5], 2) \\
	&= \sum_{I} (-1)^\size{I} A([a_0 \mid a_1, a_2, a_3, a_4 \dblslash x \mid a_5], 2, I) \, ,
	\end{align*}
	where the sum runs over all \( I \subset \Set{1,2,\ldots,n} \) containing \( 2 \) and having \( \size{I} \geq 2 \).  In this case the \( I \) ranges over the subsets \( \Set{1,2} \), \( \Set{2,3} \), \( \Set{2,4} \), \( \Set{1,2,3} \), \( \Set{1,2,4}, \Set{2,3,4} \) and \( \Set{1,2,3,4} \).  We obtain
	\begin{align*}
	{} ={} & [a_0 \mid a_2, a_2, a_3, a_4 \dblslash x \mid a_5]
	+ [a_0 \mid a_1, a_2, a_2, a_4 \dblslash x \mid a_5]
	+ [a_0 \mid a_1, a_2, a_3, a_2 \dblslash x \mid a_5] + {}\\
	& {} - [a_0 \mid a_2, a_2, a_2, a_4 \dblslash x \mid a_5]
	- [a_0 \mid a_2, a_2, a_3, a_2 \dblslash x \mid a_5]
	- [a_0 \mid a_1, a_2, a_2, a_2 \dblslash x \mid a_5] + {} \\
	& {} + [a_0 \mid a_2, a_2, a_2, a_2 \dblslash x \mid a_5] \, .
	\end{align*}
	From here \( a_2 \) must be shuffled out of the first position of each term using \autoref{lem:shuffleouty}.  This will let us express each term as the difference of an integral from \( a_2 \) to \( a_0 \) and from \( a_2 \) to \( a_5 \), as in \autoref{lem:integralbounddifference}.  Doing so shows that this can be written as
	\[
		\psi(a_0) - \psi(a_5) \, ,
	\] where
	\begin{align*}
	\psi(c) ={} & 
	[a_2 \mid a_1, a_2, a_2, a_2 \dblslash x \mid c]
	- [a_2 \mid a_1, a_2, a_2, a_4 \dblslash x \mid c]
	- [a_2 \mid a_1, a_2, a_3, a_2 \dblslash x \mid c] + {} \\
	& {} + 3 [a_2 \mid a_3, a_2, a_2, a_2 \dblslash x \mid c]
	- [a_2 \mid a_3, a_2, a_2, a_4 \dblslash x \mid c]
	- [a_2 \mid a_3, a_2, a_4, a_2 \dblslash x \mid c] + {} \\
	&{} - [a_2 \mid a_3, a_4, a_2, a_2 \dblslash x \mid c]
	- [a_2 \mid a_4, a_2, a_2, a_2 \dblslash x \mid c] \, .
	\end{align*}
	
	Now this must be repeated for the other two occurrences of \( D \) in \autoref{n4:transposition}, in order to get an expression for \( R^4(1,2\mid1,3) \).  Then the whole procedure must be repeated for the remaining 3 relations \( R^4 \).  Finally we convert to MPL's with coupled cross-ratio arguments, as in  \autoref{sec:asmplccr}.
	
	After doing this, we may use the following identities to convert between \( I_{1,3} \), and \( I_{2,2} \) and \( I_{3,1} \).  Certain versions of these identities can be found in \cite{gangl2015weight4mpl}.  Dan must have known about these identities (or similar ones), in order to obtain his reduction of \( I_{1,1,1,1} \), but the primary source of them is not clear.  Dan does not explicitly list the identity he uses to obtain his Th\'eor\`eme.
	\begin{Id}
	Modulo products, the following identities relate \( I_{2,2} \), \( I_{1,3} \) and \( I_{3,1} \).
	\begin{align*}
	I_{2, 2}(x, y) & \modsh -I_{1, 3}(x, y)-I_{1, 3}(y, x)-I_{3, 1}(x, y) \\
	I_{1, 3}(x, y) & \modsh I_{4}(x)-I_{3, 1}(x, \tfrac{x}{y}) \, .
	\end{align*}
	
	We can write these identities in the following way to see they preserve coupled cross-ratio arguments.
	\begin{align*}
		I_{2,2}(abcde) & \modsh -I_{1,3}(abcde) - I_{1,3}(abced) - I_{3,1}(abcde) \\
		I_{1,3}(abcde) & \modsh I_4(abcd) - I_{3,1}(badce) \, .
	\end{align*}
	\end{Id}
	
	By converting all terms of the result to \( I_{3,1} \) and \( I_4 \), using the above identities, we obtain the following theorem.
	
	\begin{Thm}[Correction to Th\'eor\`eme 2 in \cite{dan2011surla2}]\label{thm:dan_wt4_correct}
		As shorthand, write \( abcd \coloneqq \CR(a,b,c,d) \) for the cross-ratio.  Moreover, write \( [x,y]_{3,1} \coloneqq [0 \mid x, 0, 0, y \mid 1] \) and \( [x]_4 \coloneqq [0 \mid x, 0, 0, 0, \mid 1] \).  Finally write \( [abcde]_{3,1} \coloneqq [abcd,abce]_{3,1} \) as shorthand for coupled cross-ratios.  Then modulo products
		\[
		[a \mid b, c, d, e \mid f] \modsh \phi_4(a; b, c, d, e) - \phi_4(f; b, c, d, e) \, ,
		\]
		where
		\begin{align*}\label{eqn:i1111_as_i31_and_i4}
		& \hspace{-2em} 2\phi_4(a; b, c, d, e) \coloneqq  \\
		& [b d a \infty c]_{3,1} 
		- [b d a \infty e]_{3,1} 
		- [b e a d c]_{3,1} 
		+ [b e a \infty c]_{3,1} 
		- [b \infty c a d]_{3,1} + {} \\
		& {}
		+ [b \infty c a e]_{3,1} 
		+ [b \infty d a c]_{3,1} 
		+ [b \infty e a c]_{3,1} 
		+ [c d a b e]_{3,1} 
		- [c d a \infty e]_{3,1}  + {} \\
		& {}
		+ [c e a b \infty]_{3,1} 
		- [c e a d \infty]_{3,1} 
		- 2 [c \infty b a d]_{3,1} 
		- [c \infty d a b]_{3,1} 
		- [c \infty d a e]_{3,1}  + {} \\
		& {}
		- 2 [c \infty e a d]_{3,1} 
		- [d b c a \infty]_{3,1} 
		- [d b e a c]_{3,1} 
		- [d b \infty a c]_{3,1} 
		- [d c b a \infty]_{3,1}  + {} \\
		& {}
		- [d c e a \infty]_{3,1} 
		- [d c \infty a e]_{3,1} 
		+ 2 [d \infty a b c]_{3,1} 
		- 2 [d \infty a b e]_{3,1} 
		+ [e b c a \infty]_{3,1}  + {} \numberthis \\
		& {}
		+ [e b d a \infty]_{3,1} 
		+ [e b \infty a c]_{3,1} 
		+ [e c b a d]_{3,1} 
		- [e c b a \infty]_{3,1} 
		- [e c d a \infty]_{3,1}  + {} \\
		& {} 
		- [e c \infty a b]_{3,1} 
		+ [e c \infty a d]_{3,1} 
		+ [e \infty a b c]_{3,1} 
		- [e \infty a d b]_{3,1} 
		+ 2 [\infty b a d c]_{3,1}   + {} \\
		& {} 
		- 2 [\infty b a d e]_{3,1} 
		+ [\infty c a b d]_{3,1} 
		- [\infty c a d e]_{3,1} 
		- [\infty d b a e]_{3,1} 
		- 2 [\infty d c a b]_{3,1}   + {} \\
		& {} 
		- [\infty d e a b]_{3,1} 
		+ [\infty e c a b]_{3,1} 
		- [\infty e c a d]_{3,1} 
		+ [\infty e d a b]_{3,1} 
		- [\infty e d a c]_{3,1}  + {} \\ 
		& \hspace{2em} {} + \gamma_4(a;b,c,d,e) \, .
		\end{align*}
		And
		\begin{align*}
		& \hspace{-2em} \gamma_4(a;b,c,d,e) {} \coloneqq {} \\
		& [a b c e]_{4} 
		 + 5 [a b d \infty]_{4} 
		 - 4 [a b e \infty]_{4} 
		 + 2 [a b \infty c]_{4} 
		 + 4 [a c b d]_{4} 
		 - 2 [a c b e]_{4} 
		 + 2 [a c d \infty]_{4} + {} \\
		 & {}
		 + 2 [a c e \infty]_{4} 
		 - 2 [a c \infty b]_{4} 
		 - 2 [a d c e]_{4} 
		 + 2 [a d e \infty]_{4} 
		 + [a d \infty b]_{4} 
		 + 8 [a d \infty c]_{4} 
		 + 2 [a e c d]_{4}  + {} \\
		 & {}
		 + [a e d \infty]_{4} 
		 - 3 [a e \infty b]_{4} 
		 + 2 [a e \infty c]_{4} 
		 + 2 [a \infty b d]_{4} 
		 - 4 [a \infty b e]_{4} 
		 + 4 [a \infty c d]_{4} 
		 + 3 [a \infty c e]_{4}
		\end{align*}
		is an explicit sum of \( I_4 \)'s of rational functions.
	\end{Thm}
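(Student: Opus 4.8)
The plan is to run Dan's reduction procedure from \autoref{sec:danmethod} with $n = 4$, unwinding the $R^4$, $D$, $B$ and $A$ operators completely, and then to repackage the resulting depth-$\le 2$ hyperlogarithms as $I_{3,1}$ and $I_4$ terms carrying coupled cross-ratio arguments. Concretely, I would first specialise \autoref{thm:reduction} to $n = 4$, where $\floor{n/2} = 2$, writing $2[a_0 \mid a_1, a_2, a_3, a_4 \dblslash x \mid a_5]$ as an explicit combination of the four relations $R^4(1,2\mid1,3)$, $R^4(1,3\mid1,4)$, $R^4(1,3\mid2,3)$, $R^4(2,4\mid3,4)$, modulo products --- the shuffle term $[a_0 \mid a_1 a_2 \shuffle a_3 a_4 \dblslash x \mid a_5]$ being a product, hence zero in $\Hbl_4(E)$.

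Next I would expand each $R^4$ in turn. By \autoref{lem:ai-1jij} and \autoref{prop:transposition}, each such transposition is a combination of three $D$ operators, and by \autoref{def:operatorD} together with \autoref{prop:switchx} each $D$ is the operator $B$ of \autoref{lem:sumtoB}, i.e.\ an alternating sum of $A$ operators over all subsets $I$ of $\{1,2,3,4\}$ that contain the distinguished index and have $\size{I} \ge 2$. In each resulting term at least one slot now carries a repeated variable; I would apply \autoref{lem:shuffleouty} with $y$ equal to that variable to move every copy of it out of the first slot (working modulo products, tracking the sign $(-1)^s$), and then \autoref{lem:integralbounddifference} to split each term as $[c \mid \argdash \dblslash x \mid a_5] - [c \mid \argdash \dblslash x \mid a_0]$. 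This leaves hyperlogarithms with at most two genuinely distinct non-bound variables, and the splitting organises the whole expression into an antisymmetric combination $\phi_4(a; b, c, d, e) - \phi_4(f; b, c, d, e)$ in the two integration bounds $a_0 = a$, $a_5 = f$.

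To recognise these depth-$\le 2$ hyperlogarithms as MPLs I would apply the coordinate change of \autoref{sec:asmplccr}, namely $[a_0 \mid a_1, \ldots, a_4 \dblslash x \mid a_5] = [0 \mid \CR(x, a_0, a_5, a_1), \ldots, \CR(x, a_0, a_5, a_4) \mid 1]$, where two of the cross-ratios collapse to $0$ because two of the $a_i$'s coincide with $a_0$; this produces $I_{1,3}$, $I_{2,2}$, $I_{3,1}$ and $I_4$ terms with coupled cross-ratio arguments in the sense of \autoref{not:ccr}. Finally I would apply the Identity stated just above --- first $I_{2,2}(abcde) \modsh -I_{1,3}(abcde) - I_{1,3}(abced) - I_{3,1}(abcde)$, then $I_{1,3}(abcde) \modsh I_4(abcd) - I_{3,1}(badce)$ --- to rewrite every $I_{2,2}$ and $I_{1,3}$ term through $I_{3,1}$ and $I_4$ only, collect like terms, and read off the coefficients displayed in the theorem; the $I_4$ contributions collect into $\gamma_4$.

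The main obstacle is simply the size of the computation: four relations $R^4$, each a sum of three $D$'s, each $D$ a sum of seven $A$-terms, each of which shuffles out into several hyperlogarithms and then splits into two, so the only realistic way to carry it through without error is in a computer algebra system, which is done in \mathematicanb{dan\_procedure.m} and \mathematicanb{wt4\_dan.nb}. The genuinely delicate points are the bookkeeping of which variable is repeated in which slot of which $R^4$, the sign tracking through \autoref{lem:shuffleouty}, and the final collection into the economical form stated; here the compendium of weight 4 functional equations from \cite{gangl2015weight4mpl} is used to spot the cancellations and to express the answer through $I_{3,1}$ (and $I_4$) rather than a mixture of depth-$2$ families. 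As an independent sanity check, one verifies that the symbol modulo products of the difference of the two sides of the claimed identity vanishes.
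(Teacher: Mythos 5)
Your proposal follows exactly the route the paper takes: Section 4.1 of the paper walks through precisely this specialisation of \autoref{thm:reduction} to \( n=4 \), the unwinding of the \( R^4 \), \( D \), \( B \), \( A \) operators via \autoref{prop:transposition}, \autoref{prop:switchx}, \autoref{lem:shuffleouty} and \autoref{lem:integralbounddifference}, the conversion to coupled cross-ratio MPLs as in \autoref{sec:asmplccr}, and the final rewriting through the \( I_{2,2} \)/\( I_{1,3} \)-to-\( I_{3,1} \) identities; the paper's actual ``proof'' likewise defers the bookkeeping to the Mathematica worksheets and certifies the result by a symbol check. Your account is a faithful description of that argument, including the correct identification of where the real difficulty lies (the sheer volume of terms and sign-tracking), so there is nothing to add.
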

	
	\begin{proof}[Supporting calculations]
		The above reduction was computed and automatically \LaTeX{}ed with the worksheet \mathematicanb{wt4\_dan.nb} and the TeXUtilities package \cite{TeXUtilities} to ensure no typos occur.  This worksheet verifies the reduction against precomputed results in \mathematicanb{wt4\_dan\_precomputed.m}.  In the worksheet \mathematicanb{wt4\_check\_dan.nb}, these precomputed reductions are themselves checked by finding the symbol with the PolyLogTools package \cite{PolylogTools}.  This confirms that \autoref{thm:dan_wt4_correct} holds.  I have also implemented a standalone `lite' version of PolyLogTools in \mathematicanb{PolyLogToolsLite.m} for this purpose.
	\end{proof}
	
	\begin{Rem}
		In the original paper, Dan does not give the \( I_4 \) terms explicitly, but says only that such an explicit linear combination exists.  We give it give it here explicitly, for completeness.  We also write the arguments in the \( I_{3,1} \) terms as coupled cross-ratios because this highlights an important structural property of the reduction, which plays into the analysis of Dan's \( I_{3,1} \) functional equation \autoref{sec:i31_fe} below.
	\end{Rem}
	
	On the level of the \emph{MPL symbol}, the above result holds modulo products.  Working modulo \( \delta \), the terms in \( \gamma_4(a;b,c,d,e) \) go to 0, giving the remaining terms of \( \phi_4(a;b,c,d,e) \) as the leading terms in the expression. \medskip
	
	Potentially more interesting is a reduction to \( I_4 \) and \( I_{2,2} \) in light of the folklore conjecture that indices 1 can always be eliminated from MPL's.  For that, we can make use of the following identity, from Gangl \cite{gangl2015weight4mpl}.
	
	\begin{Id}[Proposition 9 in \cite{gangl2015weight4mpl}]
	The following identity expresses \( I_{3,1} \) in terms of \( I_{2,2} \).
	\[
	I_{3, 1}(x, y) \modsh \tfrac{1}{2} (I_{2, 2}(y, x)-I_{2, 2}(x, y)) \, .
	\]
	We can rewrite this to see it also preserves coupled cross-ratio arguments
	\[
			I_{3,1}(abcde) \modsh \tfrac{1}{2} ( I_{2,2}(abced) - I_{2,2}(abcde) ) \, .
	\]
	\end{Id}
		
	By using this identity, we can turn \autoref{thm:dan_wt4_correct} into a reduction to \( I_{2,2} \) and \( I_4 \), modulo products, to obtain the following corollary.
		
	\begin{Cor}[Reduction of \( I_{1,1,1,1} \) to \( I_{2,2} \) and \( I_4 \)]
		As shorthand, write \( abcd \coloneqq \CR(a,b,c,d) \) for the cross-ratio.  Also write \( [x,y]_{2,2} = [0 \mid x, 0, y, 0 \mid 1] \) and \( [x]_4 = [0 \mid x, 0, 0, 0, \mid 1] \).  Then modulo products
		\[
		[a \mid b, c, d, e \mid f] \modsh \phi_4(a;b,c,d,e) - \phi_4(f;b,c,d,e) \, ,
		\]
		where \( \phi_4(a;b,c,d,e) \) is exactly as given in \autoref{eqn:i1111_as_i31_and_i4}, and we understand that \( [x,y]_{3,1} \) is now expanded as follows
		\[
		[x,y]_{3,1} = \tfrac{1}{2} ( [y,x]_{2,2} - [x,y]_{2,2} ) \, .
		\]
	\end{Cor}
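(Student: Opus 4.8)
The plan is to obtain this corollary as a direct substitution into \autoref{thm:dan_wt4_correct}. That theorem already gives \( [a \mid b,c,d,e \mid f] \modsh \phi_4(a;b,c,d,e) - \phi_4(f;b,c,d,e) \), where \( \phi_4 \) is the explicit \( \Q \)-linear combination displayed in \autoref{eqn:i1111_as_i31_and_i4} of bracket symbols \( [x,y]_{3,1} = [0 \mid x,0,0,y \mid 1] \) and \( [x]_4 = [0 \mid x,0,0,0 \mid 1] \), all of whose \( I_{3,1} \)-arguments are coupled cross-ratios \( \CR(a,b,c,\argdot) \). Since the identity is asserted modulo products (on the level of the MPL symbol), it suffices to replace each occurrence of \( [x,y]_{3,1} \) by something equal to it modulo products, leaving the \( [x]_4 \) terms untouched; the \( I_4 \) part \( \gamma_4(a;b,c,d,e) \) therefore carries over verbatim.

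Second, I would invoke Gangl's identity (Proposition 9 in \cite{gangl2015weight4mpl}), namely \( I_{3,1}(x,y) \modsh \tfrac{1}{2}(I_{2,2}(y,x) - I_{2,2}(x,y)) \), i.e. \( [x,y]_{3,1} \modsh \tfrac12([y,x]_{2,2} - [x,y]_{2,2}) \) in bracket notation. Substituting this termwise into \( \phi_4 \) produces a \( \Q \)-linear combination of \( [\argdot]_4 \) and \( [\argdot,\argdot]_{2,2} \) that is still equal to the original modulo products. Since the corollary deliberately keeps \( \phi_4 \) in the form of \autoref{eqn:i1111_as_i31_and_i4} and merely re-reads the shorthand \( [x,y]_{3,1} \) as \( \tfrac12([y,x]_{2,2} - [x,y]_{2,2}) \), there is no further bookkeeping: the claimed expansion \emph{is} exactly this substitution. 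One should also check that the coupled cross-ratio structure survives, so that the shorthands \( [abcde]_{3,1} \) in \autoref{eqn:i1111_as_i31_and_i4} remain meaningful; this is precisely the rewritten form \( I_{3,1}(abcde) \modsh \tfrac12(I_{2,2}(abced) - I_{2,2}(abcde)) \), where only the last two cross-ratio slots are permuted, so \( [abcde]_{3,1} \) reads as \( \tfrac12([abced]_{2,2} - [abcde]_{2,2}) \).

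The argument is a formal substitution, so there is no genuine obstacle: all of the substance lives in \autoref{thm:dan_wt4_correct} (itself verified by symbol computation in the supporting worksheets) and in Gangl's Proposition 9. If an independent check is wanted, the cleanest route is to feed the resulting \( I_{2,2} \)-and-\( I_4 \) expression into the PolyLogTools \cite{PolylogTools} symbol computation and confirm that it agrees, modulo products, with the symbol of \( [a \mid b,c,d,e \mid f] \) — the same verification strategy already used for \autoref{thm:dan_wt4_correct}.
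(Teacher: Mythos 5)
Your proposal is correct and matches the paper's own route: the corollary is obtained exactly by substituting Gangl's identity \( I_{3,1}(x,y) \modsh \tfrac{1}{2}(I_{2,2}(y,x) - I_{2,2}(x,y)) \) termwise into \( \phi_4 \) from \autoref{thm:dan_wt4_correct}, leaving the \( I_4 \) part untouched. The observation that the substitution preserves coupled cross-ratio arguments is likewise the point the paper makes when restating the identity as \( I_{3,1}(abcde) \modsh \tfrac12(I_{2,2}(abced) - I_{2,2}(abcde)) \).
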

	
	\subsection{\texorpdfstring{Relation to Dan's previous reduction, and \( I_{3,1} \) functional equations}{Relation to Dan's previous reduction, and I\textunderscore{}31 functional equations}}\label{sec:i31_fe}
	
	Recall that in Th\'eor\`eme 3 of \cite{dan2008surla}, Dan gives a different reduction for \( I_{1,1,1,1} \) to \( I_{3,1} \) and \( I_4 \) terms.  This version is specific to the weight 4 case \( I_{1,1,1,1} \), and produces a more symmetrical and structured identity.  Nevertheless, there is a typo in the expression Dan gives, but fortunately one can take advantage of the extra structure to easily correct the result.  The correction below was provided by Gangl.
	
	\begin{Thm}[Th\'eor\`eme 3 in \cite{dan2008surla}, corrected by Gangl]\label{thm:dan_wt4_previous}
		As shorthand, write \( abcd \coloneqq \CR(a,b,c,d) \), and \( abc \coloneqq \CR(a,b,c,\infty) \) for the cross-ratio.  Moreover, write \( [x,y]_{3,1} = [0 \mid x, 0, 0, y \mid 1] \) and \( [x]_4 = [0 \mid x, 0, 0, 0, \mid 1] \).  Then modulo products
		\[
		[a \mid b, c, d, e \mid f] \modsh F(a; b, c, d, e) - F(f; b, c, d, e) \, ,
		\]
		where
		\begin{align*}
		20 F(a; b, c, d, e) \coloneqq {} & G(a, b, c, d, e) + {} \\
		& {} - G(\infty, b, c, d, e) - G(a, \infty, c, d, e) - G(a, b, \infty, d, e) + {} \\
		& {} - G(a, b, c, \infty, e) - G(a, b, c, d, \infty) + {} \\ 
		& {} + 10 H(a, b, c, d, e) \, .
		\end{align*}
		And \( G \) and \( H \) are defined by
		\begin{align*}
		G(a,b,c,d,e) & \coloneqq \Cyc_{\Set{a,b,c,d,e}} \big( [abcd,abce]_{3,1} - [edcb,edca]_{3,1} \\
			& \hspace{4em} - 3 [abdc,abde]_{3,1} + 3 [edbc,edba]_{3,1}  \big) \\
		H(a,b,c,d,e) & \coloneqq \Cyc_{\Set{a,b,c,d,e}} \left( [cab]_4 - [bda]_4 + [adb]_4 - [bad]_4 \right) \, .
		\end{align*}
	\end{Thm}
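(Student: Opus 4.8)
The plan is to deduce this from the reduction already established in \autoref{thm:dan_wt4_correct}, rather than to reproduce Dan's original weight-\(4\)-specific argument from \cite{dan2008surla}. \autoref{thm:dan_wt4_correct} gives the genuine identity
\[
[a \mid b, c, d, e \mid f] \modsh \phi_4(a; b, c, d, e) - \phi_4(f; b, c, d, e) \, ,
\]
which ultimately rests on the rigorously proven \autoref{thm:reduction}. Hence to prove the present statement it suffices to check that
\[
F(a; b, c, d, e) - F(f; b, c, d, e) \modsh \phi_4(a; b, c, d, e) - \phi_4(f; b, c, d, e) \, ,
\]
i.e.\ that the cyclically symmetrised combination of coupled cross-ratio \(I_{3,1}\)- and \(I_4\)-terms displayed above for \(20F\) represents the same class in \(\Hbl_4(E)\), modulo products, as \(\phi_4\). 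Since Gangl's correction to Dan's Th\'eor\`eme~3 amounts to fixing a transcription error that is already visible on the symbol, once those corrected coefficients of \(G\) and \(H\) are in place the only remaining task is to confirm that the expression is symbol-valid.

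The first concrete step is to fully expand \(G\) and \(H\): each is a \(\Cyc_{\Set{a,b,c,d,e}}\)-sum of the indicated \([abcd,abce]_{3,1}\)- and \([\,\cdot\,]_4\)-terms, and the five summands \(G(\infty, b, c, d, e), \ldots, G(a, b, c, d, \infty)\) are obtained by specialising one slot to \(\infty\) (using \(abc = \CR(a,b,c,\infty)\) from \autoref{def:crossratio}). This yields an explicit \(\Q\)-linear combination of \(I_{3,1}\)'s and \(I_4\)'s of rational functions, whose symbol modulo products I would then compute with the PolyLogTools package \cite{PolylogTools} and compare against the symbol modulo products of \(\phi_4(a;b,c,d,e) - \phi_4(f;b,c,d,e)\); passing between the two forms uses only the \(I_{3,1} \leftrightarrow I_{1,3} \leftrightarrow I_{2,2}\) and \(I_{1,3} \leftrightarrow I_4\) relations recorded above, all of which preserve coupled cross-ratio arguments. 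The overall factor \(\tfrac{1}{20}\), the weight \(10\) on \(H\), and the individual \(I_4\)-coefficients hidden inside \(H\) are pinned down by this matching together with the weight-\(4\) functional equations from Gangl's compendium \cite{gangl2015weight4mpl}.

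The point that needs the most care — and the main obstacle — is that agreement of symbols modulo products controls only the top slice of the identity: a priori the difference of the two sides could still carry terms of the shape \((\text{weight} < 4) \times (\text{period})\), which the symbol modulo products does not detect. This is handled in one of the standard ways: by invoking Dan's original argument in \cite{dan2008surla}, which already produces a genuine identity (so the discrepancy really is only the typo, and the typo lives at the symbol level); or by a high-precision numerical evaluation of both sides at several generic specialisations of \(a,b,c,d,e,f\); or by descending through the remaining slices of the iterated-integral coproduct of \autoref{sec:mplgroup}. None of these raises any conceptual difficulty, so the real work is the (fully automatable) bookkeeping of the large cyclic expansion, which is carried out in the accompanying Mathematica worksheets.
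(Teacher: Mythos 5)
Your proposal is correct and in substance coincides with the paper's own proof: the paper likewise establishes the corrected \( F \) by a machine computation of the symbol (worksheet \texttt{wt4\_check\_dan.nb}), and since \( \modsh \) in this paper denotes equality of the symbol modulo products, that check is already the full content of the claim as stated --- so your concluding worry about undetected \( (\text{lower weight}) \times (\text{period}) \) terms is moot here, and the paper does not address it either. The one inaccurate aside is your remark that passing between \( F \) and \( \phi_4 \) ``uses only'' the \( I_{3,1} \leftrightarrow I_{1,3} \leftrightarrow I_{2,2} \leftrightarrow I_4 \) conversions: the paper shows \( F \neq \phi_4 \) (indeed \( \phi_4 \) is not even cyclically invariant) and that the difference \( (F(a)-F(f)) - (\phi_4(a)-\phi_4(f)) \) is a genuinely nontrivial \( I_{3,1} \) functional equation decomposing into 629 instances of Gangl's relations, so the two expressions can only be matched at the symbol level, which is what your verification in fact does.
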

	
	\begin{Rem}\label{rem:dan_mistake_1}
		The mistakes in Dan's expression occur in the first summand of \( G \), where he write \( [abcd,bcde]_{3,1} \) rather than \( [abcd,abce]_{3,1} \).  This is easily corrected upon noticing that for the remaining summands, the first 3 cross-ratio slots agree in each pair -- that is, each is a coupled cross-ratio.  There is also a mistake in (his equivalent of) \( H \), where the sign of the first term \( [cab]_4 \) is flipped.  Moreover there appears to be a global sign error, so \( -20 \) in the definition of \( F \) is replaced with \( 20 \) above.
	\end{Rem}
	
	\begin{proof}[Supporting calculations] The worksheet \mathematicanb{wt4\_check\_dan.nb} computes the above reduction \( F \), and verifies it against a precomputed result in \mathematicanb{wt4\_dan\_precomputed.m}.  Then this precomputed result is itself checked using the symbol, to see \autoref{thm:dan_wt4_previous} holds.
	\end{proof}
	
	Once Dan has these two reductions, he wonders how the combinations \( \phi_4 \) and \( F \) relate.  By setting the two reductions \autoref{thm:dan_wt4_correct} and \autoref{thm:dan_wt4_previous} equal,
	\begin{align*}
	\phi_4(a;b,c,d,e) - \phi_4(f;b,c,d,e) & \modsh [a \mid b,c,d,e \mid f]\\
	& \modsh F(a;b,c,d,e) - F(f;b,c,d,e) \, ,
	\end{align*} one obtains a functional equation reducing a certain combination of \( I_{3,1} \)'s to \( I_4 \)'s.  Specifically he asks the question of whether \( \phi_4 \) and \( F \) are exactly equal, and whether this functional equation, \emph{a prior} of 4 variables, splits into two functional equations of 3 variables. \medskip
	
	Using the symbol, we can answer Dan's question as follows.
	
	\begin{Claim}
		The combinations \( F(a;b,c,d,e) \) and \( \phi_4(a;b,c,d,e) \) are \emph{not} equal.
		
		\begin{proof}
			If \( F \) and \( \phi_4 \) were equal, then their symbols modulo \( \sh \) would also have to be equal.  The worksheet \mathematicanb{wt4\_check\_i31\_fe.nb} shows that these functions have different symbols, modulo \( \sh \), and even modulo \( \delta \). 
			
			Moreover, by checking the symbol, this worksheet shows that \( \phi_4(a;b,c,d,e) \) is \emph{not} even cyclically invariant.  So there is no hope that \( F \) and \( \phi_4 \) agree!
		\end{proof}
	\end{Claim}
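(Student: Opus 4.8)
The plan is to argue by contradiction on the level of the symbol. Suppose that $F(a;b,c,d,e)$ and $\phi_4(a;b,c,d,e)$ were equal in $\Hbl_4(E)$. Then their symbols would have to agree, and hence so would their symbols modulo products (obtained by applying $\Pi_4$) and modulo $\delta$. It therefore suffices to exhibit a discrepancy at one of these coarser levels, and the most economical way to do so does not require computing $\mathcal{S}(F)$ at all.

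First, observe that $F(a;b,c,d,e)$ is cyclically invariant under $(a\,b\,c\,d\,e)$ by construction: the building blocks $G$ and $H$ are themselves images of $\Cyc_{\Set{a,b,c,d,e}}$, and the combination $G(a,b,c,d,e)-G(\infty,b,c,d,e)-G(a,\infty,c,d,e)-G(a,b,\infty,d,e)-G(a,b,c,\infty,e)-G(a,b,c,d,\infty)$ is, up to the leading term $G(a,b,c,d,e)$ (already cyclic), the orbit sum under the cyclic shift of $-G$ with one argument replaced by $\infty$; so the whole of $20F$ is cyclically invariant. Hence if $F=\phi_4$ then $\phi_4$ must be cyclically invariant as well.

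Next I would compute $\mathcal{S}(\phi_4(a;b,c,d,e))$. Each summand of $\phi_4$ is an $[\,\cdot\,,\cdot\,]_{3,1}$ or an $[\,\cdot\,]_4$ of a cross-ratio; since the symbols of $I_{3,1}(x,y)=[0\mid x,0,0,y\mid1]$ and $I_4(x)=[0\mid x,0,0,0\mid1]$ are standard (computed by repeated differentiation as in \autoref{sec:mplbackground}, or via Goncharov's coproduct formula), substituting the cross-ratio arguments and expanding each $\log$ by multiplicativity expresses $\mathcal{S}(\phi_4)$ as an explicit length-$4$ tensor in $\Q(a,b,c,d,e)^{\otimes4}$. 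I would then form $\mathcal{S}(\phi_4(a;b,c,d,e))-\mathcal{S}(\phi_4(b;c,d,e,a))$, reduce it modulo products and modulo $\delta$, and point to a single nonvanishing tensor coefficient. This contradicts the cyclic invariance forced in the previous paragraph, so $F\neq\phi_4$. (Alternatively, one can compute $\mathcal{S}(F)$ the same way and compare it directly with $\mathcal{S}(\phi_4)$ modulo $\delta$; this gives a second, independent contradiction.)

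The main obstacle is purely the bookkeeping: these are weight-$4$ symbols in five variables, far too large to expand and cancel by hand. This step is carried out computationally in \mathematicanb{wt4\_check\_i31\_fe.nb}, using the PolylogTools implementation of the symbol together with $\Pi_w$ (as \texttt{sh}) and the $\delta$-map (as \texttt{del}). The worksheet records both that $\mathcal{S}(F)\neq\mathcal{S}(\phi_4)$ already modulo $\delta$, and, independently, that $\phi_4$ fails cyclic invariance on the symbol modulo $\delta$; either statement alone proves the Claim.
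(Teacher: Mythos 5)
Your proposal is correct and takes essentially the same route as the paper: both reduce the question to a symbol computation (modulo \( \sh \) and modulo \( \delta \)) carried out in the worksheet \mathematicanb{wt4\_check\_i31\_fe.nb}, and both use the failure of cyclic invariance of \( \phi_4 \) as the decisive structural point. Your only addition is to spell out why \( F \) \emph{is} cyclically invariant (the \( \Cyc \)-built \( G \), \( H \) and the orbit sum of the \( \infty \)-specialisations), which the paper leaves implicit but which is a correct and harmless supplement.
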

	
	Nevertheless, by comparing the two expansions
	\begin{align*}
	 \phi_4(a;b,c,d,e) - \phi_4(f;b,c,d,e) & \modsh [a \mid b,c,d,e \mid f] \\
	 & \modsh F(a;b,c,d,e) - F(f;b,c,d,e) \, ,
	 \end{align*}
	 we obtain a 4-variable functional equation relating a certain combination of \( I_{3,1} \)'s to \( I_4 \)'s, modulo products \( \sh \).  Unfortunately, the functional equation which results is not, perhaps, as interesting as one might hope.  To explain this statement, we must first recall the functional equations for \( I_{3,1} \) found by Gangl \cite{gangl2015weight4mpl}.
	
	\begin{Thm}[Gangl, Theorem 7 in \cite{gangl2015weight4mpl}]\label{thm:gangli31fe}
		Using the shorthand \( abcd \coloneqq \CR(a,b,c,d) \) for the cross-ratio, and the shorthand notation \( I_{3,1}(abcde) \coloneqq I_{3,1}(abcd,abce) \) for coupled cross-ratios, as in \autoref{not:ccr}, the following identities hold, modulo products.
		\begin{align*}
		& I_{3,1}\big( [(a b) c d e] 
		- [(b a) c d e]\big) + {} \\
		& {} + I_4(-[abcd] 
		+ [abce]
		+ 3 [abde] ) 
		\modsh 0  \tag{$I_{3,1}$ $ab$} \\[1em]
		& I_{3,1}\big( [a (b c) d e] 
		- [a (c b) d e] \big) + {} \\
		& {} + I_4([c b a d]
		 - [c b a e]
		 + 2 [a b d e] 
		 + 2 [c a d e] 
		 + [c b d e] ) 
		\modsh 0 \tag{$I_{3,1}$ $bc$} \\[1em]
		& I_{3,1}\big( [a b c (d e)]
		 + [a b c (e d)] \big)
		\modsh 0 \tag{$I_{3,1}$ $de$} \\[1em]
		& I_{3,1}\big( [(a b c d) e] 
		 + [(b c d a) e]
		 + [(c d a b) e]
		 + [(d a b c) e] \big) + {}\\
		& {} + I_4\big( [a c b e]
		 + [b d c e] 
		 + [c a d e] 
		 + [d b a e] + {} \tag{$I_{3,1}$ cyc}  \\
		& \hspace{2.5em} {} + 2 [abde]
		 + 2 [b c a e]
		 + 2 [c d b e] 
		 + 2 [d a c e] \big)
		  \modsh 0
		\end{align*}
	\end{Thm}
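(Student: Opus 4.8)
The plan is to verify each of the four identities at the level of the symbol modulo products, and then to argue that this suffices for the stated identities, which are themselves only claimed modulo products (i.e.\ up to $\modsh$). First I would expand every coupled cross-ratio argument using $\CR(a,b,c,d) = \tfrac{a-c}{a-d}\big/\tfrac{b-c}{b-d}$, so that each term $I_{3,1}(abcde)$ and $I_4(abcd)$ becomes a genuine multiple polylogarithm of rational functions in $\Q(a,b,c,d,e)$, and then compute the symbol $\mathcal{S}$ of each term. The symbol of $I_4(z) = [0 \mid z,0,0,0 \mid 1]$ is the classical symbol of $-\Li_4(1/z)$, and the symbol of $I_{3,1}(x,y) = [0 \mid x,0,0,y \mid 1]$ can be obtained either by iterated differentiation as in \autoref{rem:iterated}, or by first applying Proposition 9 of \cite{gangl2015weight4mpl} to pass to $I_{2,2}$ and invoking its (known) symbol.

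Next I would apply the projector $\Pi_4$ from Section 5.4 of \cite{duhr2012polygons} to obtain the symbol modulo products, and check that each of the four combinations — the $ab$, $bc$, $de$ and $\mathrm{cyc}$ identities — maps to $0 \in \Q(a,b,c,d,e)^{\otimes 4}$. This is a finite (if sizeable, especially for the cyclic relation, which after expanding $\Cyc_{\Set{a,b,c,d,e}}$ has many terms) linear-algebra verification inside a tensor power of a rational function field, and is most efficiently carried out with the \texttt{PolyLogTools} package \cite{PolylogTools}, as elsewhere in this paper.

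The step requiring genuine care is the passage from ``$\Pi_4\mathcal{S} = 0$'' to ``the combination is $\modsh 0$'': vanishing of the symbol modulo products only shows that the difference of the two sides is a sum of shuffle products plus terms of the form $\zeta(k)\times(\text{weight }4-k\text{ MPL})$. To eliminate the latter I would also examine the deeper coproduct components — equivalently, check that the combination is $\moddel 0$ and that its depth-graded pieces vanish — and then kill any residual $\zeta(2)^2$, $\zeta(3)\log$ or $\zeta(4)$ contributions by specialising $a,b,c,d,e$ to convenient values at which all the $I_{3,1}$ and $I_4$ terms degenerate to expressions one can evaluate in closed form. I expect this upgrade, rather than the symbol computation itself, to be the main obstacle to a fully rigorous write-up.

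Finally, I would note that the $de$ relation admits a direct proof bypassing the symbol altogether: Proposition 9 of \cite{gangl2015weight4mpl} gives $I_{3,1}(x,y) \modsh \tfrac12\bigl(I_{2,2}(y,x) - I_{2,2}(x,y)\bigr)$, so with $p = \CR(a,b,c,d)$ and $q = \CR(a,b,c,e)$ one has $I_{3,1}(abcde) + I_{3,1}(abced) = I_{3,1}(p,q) + I_{3,1}(q,p) \modsh 0$ immediately. One could hope to reduce the $ab$, $bc$ and cyclic relations to $I_{2,2}$ in the same fashion and then derive them from the dihedral symmetry of $\mathfrak{M}_{0,5}$ together with the five-term relation; but getting the cyclic relation to fall out cleanly this way is likely to be delicate, so the symbol computation above remains the pragmatic route.
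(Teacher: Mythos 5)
This statement is not proved in the paper at all: it is imported verbatim as Theorem~7 of \cite{gangl2015weight4mpl} and used as an input for decomposing Dan's \( I_{3,1} \) functional equation, so there is no ``paper proof'' to match against; your proposal amounts to an independent verification, which is legitimate and essentially the same kind of check the paper performs elsewhere (symbol computations with \cite{PolylogTools}). Your route is sound, but one of your steps is unnecessary in this setting: the careful ``upgrade'' from vanishing of \( \Pi_4\mathcal{S} \) to the identity modulo products is moot, because in this paper the relation \( \modsh \) is \emph{defined} as equality of the symbol after applying the projector \( \Pi_w \) of \cite{duhr2012polygons}, i.e.\ modulo the shuffle ideal at the symbol level; terms of the form \( \zeta(k)\times(\text{lower weight}) \) are invisible to the symbol and are not being claimed, so no coproduct-slice analysis or specialisation argument is required (this is also the sense in which Gangl's own results are stated, as the paper notes for his Theorem~16). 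With that simplification, your plan reduces to expanding the cross-ratios, computing the symbols of \( I_{3,1} \) and \( I_4 \), applying \( \Pi_4 \), and checking the four combinations vanish --- a finite computation that does verify the theorem. Your direct argument for the \( de \) relation is correct and worth keeping: Proposition~9 of \cite{gangl2015weight4mpl} gives \( I_{3,1}(x,y) \modsh \tfrac12\big(I_{2,2}(y,x)-I_{2,2}(x,y)\big) \), whence \( I_{3,1}(p,q)+I_{3,1}(q,p) \modsh 0 \) with \( p=\CR(a,b,c,d) \), \( q=\CR(a,b,c,e) \), which is exactly the \( de \) identity in the coupled cross-ratio shorthand of \autoref{not:ccr}; the analogous reduction of the \( ab \), \( bc \) and cyclic relations to five-term-type relations for \( I_{2,2} \) is, as you say, not obviously clean, so the symbol check remains the pragmatic proof for those.
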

		
	\begin{Claim}[Decomposition of Dan's \( I_{3,1} \) functional equation]
		Dan's functional equation for \( I_{3,1} \) arising from comparing
		\begin{align*}
		\phi_4(a;b,c,d,e) - \phi_4(f;b,c,d,e) & \modsh [a \mid b,c,d,e \mid f] \\
		& \modsh F(a;b,c,d,e) - F(f;b,c,d,e) \, ,
		\end{align*}
		can be written as a combination of 629 instances of the \( I_{3,1} \) functional equations in \autoref{thm:gangli31fe}.  The leftover \( I_4 \) terms cancel pairwise using the inversion relation \( I_4(x) = -I_4(\tfrac{1}{x}) \).
		
		\begin{proof}
			The decomposition of Dan's \( I_{3,1} \) functional equation into 629 instances of Gangl's \( I_{3,1} \) functional equations is given in \mathematicanb{wt4\_fe\_i31.m}.  The worksheet \mathematicanb{wt4\_check\_i31\_fe.nb} verifies that this decomposition holds.
		\end{proof}
	\end{Claim}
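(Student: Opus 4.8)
The plan is to treat this as an explicit linear-algebra problem over the field of rational functions in \( a, b, c, d, e, f \). Write \( \Psi \coloneqq \phi_4(a;b,c,d,e) - \phi_4(f;b,c,d,e) - F(a;b,c,d,e) + F(f;b,c,d,e) \); this is precisely Dan's functional equation, and by the expansions of \autoref{thm:dan_wt4_correct} and \autoref{thm:dan_wt4_previous} it is already given as an explicit \( \Q \)-linear combination of terms \( [x,y]_{3,1} \) and \( [x]_4 \) whose arguments are cross-ratios (and products of cross-ratios) in those variables. The assertion is that \( \Psi \modsh 0 \) is a formal consequence of Gangl's four \( I_{3,1} \) functional equations from \autoref{thm:gangli31fe} together with the inversion relation \( I_4(x) + I_4(\tfrac{1}{x}) \modsh 0 \) acting on the depth-one part. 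I would prove this by exhibiting an explicit \emph{certificate}: a finite list of instantiations of Gangl's identities, with rational coefficients, whose sum equals \( \Psi \) up to \( I_4 \)-terms, after which the residual \( I_4 \)-terms are checked to vanish by inversion.

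To find the certificate I would set up the following system. Each of Gangl's four templates becomes a genuine relation once one substitutes a \( 5 \)-tuple of rational functions for its free points (carrying through the cross-ratio bookkeeping so that the resulting \( I_{3,1} \)-arguments have the shape appearing in \( \Psi \)). I would generate a large but finite pool of such instances — the natural candidates being those whose \( I_{3,1} \)-arguments already lie in the finite set of cross-ratios occurring in \( \Psi \) — reduce every symbol \( [x,y]_{3,1} \) to a canonical form (conveniently its symbol modulo \( \sh \), which is faithful enough here since we work in \( \Hbl_4(E) \) modulo products), discard the \( I_4 \)-tails, and solve the resulting sparse rational linear system for the combining coefficients. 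The paper records that \( 629 \) instances suffice; if an initial pool is too small the system is simply inconsistent and one enlarges it.

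With a certificate in hand the remaining checks are routine: subtracting the chosen combination of Gangl's identities from \( \Psi \) must leave a pure \( I_4 \)-expression, and collecting its \( [x]_4 \)-terms one pairs each argument \( x \) with \( \tfrac{1}{x} \) and confirms the coefficients are opposite, so it vanishes by inversion. As an independent sanity check I would also compute the symbol of \( \Psi \) modulo \( \sh \) (and modulo \( \delta \)) directly and confirm it is zero — this is what the worksheet \mathematicanb{wt4\_check\_i31\_fe.nb} does. The genuine obstacle is the \emph{search}, not the verification: Gangl's families admit infinitely many instantiations and there is no a priori bound telling us which are needed, so one must pick a generating pool — guided by the cross-ratios actually present in \( \Psi \) and by the coupled-cross-ratio structure that both \( \phi_4 \) and \( F \) respect — and iterate; keeping the size of that linear system over a multivariate rational function field manageable is where essentially all the work lies, whereas once the list of \( 629 \) instances and their coefficients is produced, checking it is mechanical.
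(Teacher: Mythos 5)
Your proposal follows essentially the same route as the paper: the paper's proof is precisely such an explicit certificate — a list of 629 instantiations of Gangl's equations recorded in \texttt{wt4\_fe\_i31.m}, verified symbolically (modulo \( \sh \)) in \texttt{wt4\_check\_i31\_fe.nb}, with the residual \( I_4 \) terms cancelling pairwise by inversion. Your additional discussion of how to generate the pool of instances and solve the linear system is a reasonable account of how such a certificate is found, but it does not constitute a different method of proof.
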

	
	In fact, this was to be expected.  Gangl has found that the functional equations in \autoref{thm:gangli31fe} provide a basis for the space of all relations between \( I_{3,1}(abcde) \) terms.  Moreover, we see from \autoref{thm:dan_wt4_correct} and \autoref{thm:dan_wt4_previous} that every term of the weight 4 reductions can be written with coupled cross-ratio arguments.
	
	\section{\texorpdfstring{Reduction of \( I_{1,1,1,1,1} \)}{Reduction of I\_11111}} \label{sec:dan5}
	
	We shall now apply Dan's reduction procedure to the quintuple-log \( I_{1,1,1,1,1}(v,w,x,y,z) \) to obtain expressions for it in terms of lower depth multiple polylogarithms.  Or rather we shall apply it to the hyperlogarithm \( I(a \mid b, c, d, e, f \mid g) \), like above.
	
	Firstly we will examine the `raw' output of the reduction procedure which reduces \( I_{1,1,1,1,1} \) to the 11 depth \( \leq 3 \) integrals \( I_{5}, I_{4,1}, I_{3,2}, I_{3,1,1},  I_{2,2,1} \), and permutations of these indices.  Then using some identities from Chapter 4 and Chapter 5 of \cite{charlton2016identities} (and reproduced in \autoref{sec:dan_wt5_as_311_32_5} below), we will be able to reduce this expression to explicit \( I_{5}, I_{3,2}, I_{3,1,1} \) terms only, modulo products.
	
	In order to explicitly confirm the folklore conjecture that indices \( 1 \) can always be eliminated from MPL's, we then need to reduce \( I_{3,1,1} \) to \( I_{3,2} \) terms and \( I_5 \) terms.  On the symbol level, we can do this in a rather `brute-force' way to give a reduction of \( I_{3,1,1} \) to \( I_{3,2} \), but only modulo \( \delta \). \medskip
	
	\subsubsection*{Supporting calculations} All of the reduction results in this section were computed and automatically \LaTeX{}ed using with the worksheet \mathematicanb{wt5\_dan.nb} and TeXUtilities \cite{TeXUtilities}.  This worksheet verifies the reductions against precomputed results in \mathematicanb{wt5\_dan\_precomputed.m}.  Using the PolyLogTools Package \cite{PolylogTools} to calculate the symbol, these precomputed reductions are themselves checked in the worksheet \mathematicanb{wt5\_check\_dan.nb}.  I have also implemented a standalone `lite' version of PolyLogTools in \mathematicanb{PolyLogToolsLite.m}, for this purpose.
	
	\subsection{\texorpdfstring{`Raw' output of \( I_{1,1,1,1,1} \) reduction}{Raw output of I\_11111 reduction}}
	
	When attempting to reduce \( I_{1,1,1,1,1} \) with Dan's reduction method, there are two choices.  We can either use the efficient approach from \autoref{sec:structured_alln} which works for all \( n \).  Or we can use the efficient approach from \autoref{sec:structured_oddn} which works only for \( n \) odd.  The \( n \) odd approach has the advantage of producing significantly shorter reductions.  We will compare the two initial results to see how much better the \( n \) odd approach works.
	
	\subsubsection{All \( n \) approach:} Apply the all \( n \) approach to \( [a \mid b, c, d, e, f \mid g] \).  The result can be written as \( \psi_5(a; b, c, d, e, f) - \psi_5(g; b, c, d, e, f) \), where \( \psi_5 \) consists of those terms which contain the variable \( a \).  We obtain the following distribution of terms in \( \psi_5 \).
	
	\begin{center}
		\begin{tabular}{c|c||c|c}
			Integral & Number in \( \psi_5 \) & Integral & Number in \( \psi_5 \) \\ \hline
			$ I_{3, 1, 1} $ & 22 & $ I_{4, 1} $ & 34 \\ 
			$ I_{2, 2, 1} $ & 26 & $ I_{3, 2} $ & 41 \\ 
			$ I_{2, 1, 2} $ & 22 & $ I_{2, 3} $ & 39 \\ 
			$ I_{1, 3, 1} $ & 21 & $ I_{1, 4} $ & 29 \\ 
			$ I_{1, 2, 2} $ & 22 & $ I_{5} $ & 37 \\ 
			$ I_{1, 1, 3} $ & 14 & & \\ \hline
			\multicolumn{3}{c|}{Total number} & 307
		\end{tabular}
	\end{center}
	
	\subsubsection{Odd \( n \) approach:} Apply the odd \( n \) approach to \( [a \mid b, c, d, e, f \mid g] \).  The result can be written as \( \phi_5(a; b, c, d, e, f) - \phi_5(g; b, c, d, e, f) \), where \( \phi_5 \) consists of those terms which contain the variable \( a \).  We obtain the following distribution of terms in \( \phi_5 \).
	
	\begin{center}
		\begin{tabular}{c|c||c|c}
			Integral & Number in \( \phi_5 \) & Integral & Number in \( \phi_ 5 \) \\ \hline
			$ I_{3, 1, 1} $ & 6 & $ I_{4, 1} $ & 11 \\ 
			$ I_{2, 2, 1} $ & 7 & $ I_{3, 2} $ & 17 \\ 
			$ I_{2, 1, 2} $ & 7 & $ I_{2, 3} $ & 17 \\ 
			$ I_{1, 3, 1} $ & 6 & $ I_{1, 4} $ & 11 \\
			$ I_{1, 2, 2} $ & 5 & $ I_{5} $ & 20 \\ 
			$ I_{1, 1, 3} $ & 6 & & \\ \hline
			\multicolumn{3}{c|}{Total number} & 113
		\end{tabular}
	\end{center}
	
	Already one can see that the \( n \) odd approach is significantly better as it involves only about one-third the number of terms, compared to the all \( n \) approach.  This reduction of \( I_{1,1,1,1,1} \) to depth \( \leq 3 \) integrals is (just) short enough to give explicitly.
	
	\begin{Id}[Reduction of \( I_{1,1,1,1,1} \) to depth \( \leq 3 \)] \label{id:i11111_dan_reduction}
		As shorthand, write \( abcd \coloneqq \CR(a,b,c,d) \) for the cross-ratio.  Moreover, recall the coupled cross-ratio shorthand notation from \autoref{not:ccr}, which has \( I_{s_1,\ldots,s_k}(abcd_1 \ldots d_k) \coloneqq \allowbreak I_{s_1,\ldots,s_k}(abcd_1, \ldots, abcd_k) \).  Then modulo products, Dan's efficient odd \( n \) reduction procedure gives the following reduction
		\[
		[a \mid b, c, d, e, f \mid g] \modsh \phi_5(a; b, c, d, e, f) - \phi_5(g; b, c, d, e, f) \, ,
		\]
		where
		\begin{align*}	& \hspace{-1em} \phi_5(a; b, c, d, e, f) \coloneqq \\
		& {}
			I_{3, 1, 1}( -[b d a c e f] 
			+ 2 [b d a \infty e f] 
			+ [d \infty a b e f] 
			+ [d \infty a c e f] 
			+ [\infty b a c e f] 
			+ [\infty b a d e f] ) + {} \\[1ex]
			& {} + I_{2, 2, 1}( [b d a \infty e f] 
			+ [d \infty a b e f] 
			+ [d \infty a c b f] 
			+ [f \infty a c e b] + {} \\
			& \hspace{3.5em} {}
			- [f \infty a d e b] 
			+ [\infty b a c d f] 
			+ [\infty b a c e f] ) + {} \\[1ex] 
			& {} + I_{2, 1, 2}( [b d a \infty e f] 
			- [b f a c e \infty] 
			+ [b f a d e \infty] 
			+ [d \infty a b e f]  {} \\
			& \hspace{3.5em} {}
			+ [d \infty a c b e] 
			+ [\infty b a c d e] 
			+ [\infty b a d e f] ) + {} \\[1ex]
			& {} + I_{1, 3, 1}( -[b d a c \infty f] 
			+ [d \infty a b e f] 
			+ [f \infty a c d b] 
			- 2 [f \infty a d e b] 
			+ [\infty b a c d f] 
			- [\infty b a d e f] ) + {} \\[1ex]
			& {} + I_{1, 2, 2}( -[b d a c \infty e] 
			- [b f a c d \infty] 
			+ [b f a d e \infty] 
			+ [d \infty a b e f] 
			- [f \infty a d e b] ) + {} \\[1ex]
			& {} + I_{1, 1, 3}( -[b f a c d e] 
			+ 2 [b f a d e \infty] 
			+ [d \infty a b e f] 
			+ [f \infty a c d e] 
			+ [\infty b a c d e] 
			+ [\infty b a d e f] ) + {} \\[1ex]
			& {} + I_{4, 1}( [b d a c f] 
			- [b d a e f] 
			- 3 [b d a \infty f] 
			- 3 [d \infty a b f] 
			- [d \infty a c f] 
			+ [d \infty a e f]  {} \\
			& \hspace{3em} {}
			- [f \infty a c b] 
			+ 3 [f \infty a d b] 
			- 3 [f \infty a e b] 
			- 2 [\infty b a c f] 
			- 2 [\infty b a e f] ) + {} \\[1ex] 
			& {} + I_{3, 2}( [b d a c e] 
			- [b d a e f] 
			- 2 [b d a \infty e] 
			- [b d a \infty f] 
			+ [b f a c \infty] 
			- 2 [b f a d \infty]  {} \\
			& \hspace{3em} {}
			+ [b f a e \infty] 
			- [d \infty a b e] 
			- 2 [d \infty a b f] 
			- [d \infty a c e] 
			+ [d \infty a e f] 
			+ [f \infty a d b]  {} \\
			& \hspace{3em} {}
			- 2 [f \infty a e b] 
			- [\infty b a c e] 
			- [\infty b a d e] 
			- [\infty b a d f] 
			- [\infty b a e f] ) + {} \\[1ex]
			& {} + I_{2, 3}( -[b d a e f] 
			- 2 [b d a \infty e] 
			+ [b f a c e] 
			- [b f a d e] 
			- 2 [b f a d \infty] 
			+ 2 [b f a e \infty]  {} \\
			& \hspace{3em} {}
			- 2 [d \infty a b e] 
			- [d \infty a b f] 
			- [d \infty a c b] 
			+ [d \infty a e f] 
			- [f \infty a c e] 
			+ [f \infty a d e]  {} \\
			& \hspace{3em} {}
			- [f \infty a e b] 
			- [\infty b a c d] 
			- [\infty b a c e] 
			- [\infty b a d e] 
			- [\infty b a d f] ) + {} \\[1ex]
			& {} + I_{1, 4}( [b d a c \infty] 
			- [b d a e f] 
			+ [b f a c d] 
			- 3 [b f a d e] 
			+ 3 [b f a e \infty] 
			- 3 [d \infty a b e]  {} \\
			& \hspace{3em} {}
			+ [d \infty a e f] 
			- [f \infty a c d] 
			+ 3 [f \infty a d e] 
			- [\infty b a c d] 
			+ [\infty b a e f] ) + {} \\[1ex]
			& {} + I_{5}( 6 [a b d \infty] 
			+ [a b f \infty] 
			- [a c b d] 
			- [a c b f] 
			+ [a c d \infty] 
			+ [a c f \infty] 
			+ 2 [a c \infty b]  {} \\
			& \hspace{2.5em} {}
			+ 4 [a d b f] 
			- 4 [a d f \infty] 
			+ 2 [a d \infty b] 
			+ 4 [a e b d] 
			- 6 [a e b f] 
			- 4 [a e d \infty] 
			+ 6 [a e f \infty]  {} \\
			& \hspace{2.5em} {}
			+ 2 [a e \infty b] 
			- [a f b d] 
			+ [a f d \infty] 
			+ 2 [a f \infty b] 
			+ 4 [a \infty b d] 
			+ 4 [a \infty b f] ) \, .
		\end{align*}
	\end{Id}
	
	\subsection{\texorpdfstring{Reduction of \( I_{1,1,1,1,1} \) to \( I_{3,1,1} \), \( I_{3,2} \) and \( I_5 \) modulo products}{Reduction of I\_11111 to I\_311, I\_32 and I\_5}}
	\label{sec:dan_wt5_as_311_32_5}
	
	From Chapter 4 and Chapter 5 of \cite{charlton2016identities}, we have a number of identities which relate weight 5 multiple polylogarithms of depth 2 and depth 3.  We can use these identities to reduce all weight 5 multiple polylogarithms to a combination of \( I_{3,1,1} \), \( I_{3,2} \) and \( I_5 \) terms only.
	
	\subsubsection*{Supporting calculations}  We reproduce the necessary identities below; they may be checked directly by computing the symbol with Duhr's PolylogTools package \cite{PolylogTools}, or with the `lite' version implemented in \mathematicanb{PolyLogToolsLite.m}.  These identities are checked in the worksheet \mathematicanb{wt5\_check\_ids.nb}, using data from \mathematicanb{wt5\_rules\_to\_i311\_i32\_i5.m}.
	
	\subsubsection*{Depth 2} We first consider how to rewrite the depth 2 multiple polylogarithms in terms of \( I_{3,2} \) and \( I_5 \).
	
	 The first result allows us to relate \( I_{a,b} \) in terms of \( I_{b,a} \).  Therefore we can rewrite \( I_{1,4} \) in terms of \( I_{4,1} \), and we can rewrite \( I_{2,3} \) in terms of \( I_{3,2} \).
	
	\begin{Id}[Proposition 4.2.22 in \cite{charlton2016identities}]
		Modulo products, the following identity holds for any depth 2 multiple polylogarithm
		\[
			I_{n,m}(abcde) \modsh -I_{m,n}(badce) + I_{n+m}(abcd) \, .
		\]
		
		\begin{proof}
			This follows directly from the `stuffle' product of the series definition for multiple polylogarithms.  Expanding out the following product gives
			\[
				I_b(y) \ast I_a(\tfrac{x}{y}) = I_{a,b}(x,y) + I_{b,a}(x, \tfrac{x}{y}) - I_{a+b}(x) \, .
			\]
			Looking modulo products, and writing the arguments as coupled cross-ratios gives the above result.
		\end{proof}
	\end{Id}
	
	We can then reduce \( I_{4,1} \) to \( I_{3,2} \) terms, via the following identity.
	
	\begin{Id}\label{id:i41asi32}
		The following identity expresses \( I_{4,1} \) in terms of \( I_{3,2} \), modulo products.
		\[
			I_{4, 1}(a b c d e) {} \modsh -\tfrac{1}{3} I_{3, 2}( [a b c d e] 
			+ [a b c e d] ) \, .
		\]
	\end{Id}
	
	This means every depth 2 weight 5 multiple polylogarithm can expressed in terms of \( I_{3,2} \) and \( I_5 \) terms only.
	
	\subsubsection*{Depth 3}
	
	Now we consider how to rewrite any depth 3 multiple polylogarithms in terms of \( I_{3,1,1} \) and lower depth MPL's.
	
	Theorem 4.3.18 in \cite{charlton2016identities} tells us that all weight 5 depth 3 multiple polylogarithms are somehow `equivalent' modulo \( I_{3,2} \).  In particular, every such multiple polylogarithm can be written as \( I_{3,1,1} \), modulo \( I_{3,2} \) terms.  We try to give the shortest possible identities for this below. \medskip
	
	Firstly, we relate \( I_{1,3,1} \) to \( I_{3,1,1} \), and we relate \( I_{2,2,1} \) to \( I_{3,1,1} \).
	
	\begin{Id}
		The following identities express \( I_{1,3,1} \) in terms of \( I_{3,1,1} \), and \( I_{2,2,1} \) in terms of \( I_{3,1,1} \), modulo products.
		\begin{align*}
			I_{1, 3, 1}(a b c d e f) & \modsh I_{3, 1, 1}(a b c f e d) \\
			I_{2, 2, 1}(a b c d e f) & {} \modsh - I_{3, 1, 1}( [a b c d e f] 
			+ [a b c d f e] 
			+ [a b c f d e] 
			+ [a b c f e d] ) \, .
		\end{align*}
	\end{Id}
	
	The first identity here is an instance of a more general result relating \( I_{a,b,1} \) to \( I_{b,a,1} \), modulo products.  Details of this can be found in Proposition 4.3.15 of \cite{charlton2016identities}. \medskip
	
	The following slightly longer identity relates \( I_{1,1,3} \) and \( I_{3,1,1} \).

	\begin{Id}
		The following identity expresses \( I_{1,1,3} \) in terms of \( I_{3,1,1} \), \( I_{3,2} \) and \( I_{5} \) terms, modulo products.
		\begin{align*}
		I_{1, 1, 3}(a b c d e f) \modsh {} & I_{3, 1, 1}(a b d c f e) + {} \\ 
		& {} + \tfrac{1}{3} I_{3, 2}( [a b c e f] 
		+ [a b e c f] 
		+ [a b e d f] 
		+ [a b e f c] + {} \\
		& \hspace{3.7em} {}
		- [b a e f c] 
		- [b a e f d] 
		+ [b a f e c] ) + {} \\ 
		& {} + \tfrac{1}{3} I_{5}( -4 [a b d e] 
		+ 6 [a b d f]
		- 4 [a b e c] 
		+ 7 [a b e f] 
		+ 16 [a b f c] ) \, .
		\end{align*}
	\end{Id}
	
	Finally, we write \( I_{2,1,2} \) in terms of \( I_{3,1,1} \) and \( I_{1,2,2} \) in terms of \( I_{3,1,1} \).  These involve a more complicated identities, with multiple \( I_{3,1,1} \) terms.
	
	\begin{Id}
		The following identity expresses \( I_{2,1,2} \) in terms of \( I_{3,1,1} \), \( I_{3,2} \), and \( I_5 \) terms, modulo products.
		\begin{align*}
		I_{2, 1, 2}(a b c d e f) \modsh {} & I_{3, 1, 1}( [a b c d f e] 
		+ [a b c f d e] 
		+ [a b c f e d] 
		+ [a b d c e f] 
		+ [a b d e c f] 
		+ [a b e d c f] ) + {} \\ 
		& {} + I_{3, 2}( [a b c d f] 
		+ 2 [a b c e f] 
		- [a b c f d] 
		- [a b c f e] 
		+ [a b d c f] + \\
		& \hspace{3.5em} {}
		+ [a b d e f] 
		+ 2 [a b e c f] 
		+ [a b e d f] 
		- [a b e f c] ) + {} \\ 
		& {} + I_{5}( 12 [a b c f] 
		+ 6 [a b d f] 
		+ 12 [a b e f] ) \, .
		\end{align*}
	\end{Id}
	
	\begin{Id}
		The following identity expresses \( I_{1,2,2} \) in terms of \( I_{3,1,1} \), \( I_{3,2} \), and \( I_5 \) terms, modulo products.
		\begin{align*}
		I_{1, 2, 2}(a b c d e f) \modsh {} & I_{3, 1, 1}( - [a b c f e d] 
		- [a b d c e f] 
		- [a b d c f e] 
		- [a b d e c f] ) + {} \\ 
		& {} + I_{3, 2}( -2 [a b c e f] 
		+ [a b c f e] 
		- 2 [a b e c f] 
		- [a b e f d] 
		- [a b f e d] ) + {} \\ 
		& {} + I_{5}( -12 [a b c f] 
		- 6 [a b d e] 
		- 6 [a b d f] 
		- 6 [a b e f] )
		\end{align*}
	\end{Id}  
	
	Notice that all of the identities above make use of coupled cross-ratio arguments only.  Therefore, applying the above identities to \( \phi_5(a; b, c, d, e, f) \) from \autoref{id:i11111_dan_reduction} produces the following result.
	
	\begin{Thm}[Reduction of \( I_{1,1,1,1,1} \) to \( I_{3,1,1} \), \( I_{3,2} \) and \( I_5 \)]\label{thm:dan_wt5_as_311_32_5_nonexplicit}
		Modulo products, we can write
		\[
			[a \mid b, c, d, e, f \mid g] \modsh \phi_5'(a; b, c, d, e, f) - \phi_5'(g; b, c, d, e, f) \, ,
		\]
		where \( \phi_5' \) is an \emph{explicit} combination of \( I_{3,1,1} \), \( I_{3,2} \) and \( I_5 \) terms involving only coupled cross-ratio arguments.
	\end{Thm}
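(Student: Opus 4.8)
The plan is to obtain $\phi_5'$ directly from $\phi_5$ in \autoref{id:i11111_dan_reduction} by substituting, term by term, the reduction identities collected above. Concretely, \autoref{id:i11111_dan_reduction} already writes $[a \mid b,c,d,e,f \mid g]$, modulo products, as $\phi_5(a;b,c,d,e,f) - \phi_5(g;b,c,d,e,f)$, where $\phi_5$ is a linear combination of the eleven weight-$5$ integrals $I_5$, $I_{4,1}$, $I_{3,2}$, $I_{3,1,1}$, $I_{2,2,1}$ and the remaining permutations of these index strings, all evaluated at coupled cross-ratio arguments. It therefore suffices to rewrite each of the eight ``non-target'' integrals $I_{4,1}$, $I_{1,4}$, $I_{2,3}$, $I_{2,2,1}$, $I_{2,1,2}$, $I_{1,3,1}$, $I_{1,2,2}$, $I_{1,1,3}$ as a combination of $I_{3,1,1}$, $I_{3,2}$, $I_5$ only, leaving the three target integrals untouched.

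First I would clear the depth-$2$ terms. The depth-$2$ stuffle identity (Proposition 4.2.22 in \cite{charlton2016identities}) turns $I_{1,4}$ into $-I_{4,1}$ plus an $I_5$ term, and $I_{2,3}$ into $-I_{3,2}$ plus an $I_5$ term; then \autoref{id:i41asi32} rewrites the resulting $I_{4,1}$ — together with the $I_{4,1}$ already present in $\phi_5$ — purely in terms of $I_{3,2}$. After this pass the only depth-$\leq 2$ integrals remaining are $I_{3,2}$ and $I_5$. Next I would treat the depth-$3$ terms: each of $I_{1,3,1}$, $I_{2,2,1}$, $I_{2,1,2}$, $I_{1,2,2}$, $I_{1,1,3}$ is rewritten by the corresponding identity above as a combination of $I_{3,1,1}$ together with correction terms that are $I_{3,2}$'s and $I_5$'s, i.e.\ integrals that were already allowed, so no forbidden integral is re-introduced. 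Carrying out these substitutions into $\phi_5$ and collecting like terms produces the explicit $\phi_5'$.

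The structural point that makes all of this cohere — and the step I would be most careful about — is that \emph{every} identity used (Proposition 4.2.22, \autoref{id:i41asi32}, and the five depth-$3$ identities) holds while preserving coupled cross-ratio arguments in the sense of \autoref{not:ccr}: feeding a coupled-cross-ratio argument $I_{s_1,\dots,s_k}(abc\,d_1\cdots d_k)$ into the left-hand side yields on the right-hand side only coupled-cross-ratio arguments, obtained by permuting the tail letters $d_1,\dots,d_k$ (and, in the depth-$2$ case, the leading pair $a,b$). Since $\phi_5$ in \autoref{id:i11111_dan_reduction} is already written entirely with coupled cross-ratios, this property is closed under the substitution, so $\phi_5'$ is again expressed solely in coupled cross-ratios. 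The same property guarantees that $\phi_5'$ still depends only on the variable $a$ (not on $g$), so the split $[a\mid b,c,d,e,f\mid g] \modsh \phi_5'(a;b,c,d,e,f) - \phi_5'(g;b,c,d,e,f)$ persists.

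Finally, correctness of the individual input identities is a symbol-level check: each can be verified by computing the symbol with the PolylogTools package \cite{PolylogTools} (as recorded in \mathematicanb{wt5\_check\_ids.nb}), and the fully assembled reduction $\phi_5'$ can be re-checked the same way. Thus the only genuine obstacle is bookkeeping — performing the substitutions in the right order (depth $2$ before depth $3$, and $I_{1,4}\to I_{4,1}\to I_{3,2}$ in that order), and confirming that all eight non-target integrals are covered by the listed identities — rather than any new mathematical content.
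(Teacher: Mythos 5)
Your proposal is correct and follows essentially the same route as the paper: substitute the quoted depth-2 and depth-3 identities (Proposition 4.2.22, \autoref{id:i41asi32}, and the five depth-3 identities) into \( \phi_5 \) from \autoref{id:i11111_dan_reduction}, using the fact that each identity preserves coupled cross-ratio arguments so that the resulting \( \phi_5' \) is again in coupled cross-ratios and the split \( \phi_5'(a;\ldots) - \phi_5'(g;\ldots) \) persists, with the explicit outcome and symbol checks recorded in the worksheets. No meaningful difference from the paper's argument.
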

	
	If the above identities are applied to \( \phi_5 \) from \autoref{id:i11111_dan_reduction}, some \( I_5 \) in the result can be combined after rewriting the cross-ratio arguments using \( \CR(a,b,c,d) = \CR(b,a,d,c) = \CR(c,d,b,a) = \CR(d,c,b,a) \).  The resulting \( \phi_5'(a; b, c, d, e, f) \) has the following distribution of terms.
	\begin{center}
		\begin{tabular}{c|c}
			Integral & Number in \( \phi_5' \) \\ \hline
			$ I_{3, 1, 1} $ & 69 \\
			$ I_{3, 2}$ & 125 \\ 
			$ I_{5} $ & 48 \\ \hline
			Total number & 244
		\end{tabular}
	\end{center}
	The explicit expression for this \( \phi_5'(a; b, c, d, e, f) \) is given in \autoref{thm:dan_wt5_as_311_32_5}.
	
	\subsection{\texorpdfstring{Reduction of \( I_{1,1,1,1,1} \) to \( I_{3,2} \), modulo \( \delta \)}{Reduction of I\_11111 to I\_32, modulo delta}}
	
	Ideally, the final step of this reduction would to be write \( I_{3,1,1} \) in terms of \( I_{3,2} \) and \( I_5 \), modulo products.  Then we can completely reduce \( I_{1,1,1,1,1} \) to \( I_{3,2} \) and \( I_5 \), and explicitly confirm that the index 1 can always be eliminated.
	
	It is not possible to express \( I_{3,1,1} \) in terms of \( I_{3,2} \) using only coupled cross-ratio arguments.  However, a more `brute-force' approach does succeed, modulo \( \delta \).
	
	\subsubsection*{`Brute-force' reduction to \( I_{3,2} \) modulo \( \delta \)}
	
	The first step towards brute force identities involves computing \( I_{4,1}(x,y) \), modulo \( \delta \).  One finds
	\[
		I_{4,1}(x,y) \xrightarrow{\delta} - I_{2}(x) \wedge I_3(y) + I_{3}(x) \wedge I_2(y) \, .
	\]
	Modulo \( \delta \), both \( x \) and \( y \) appear separately on the right hand side; there are no terms where combinations of \( x \) and \( y \) appear in the same argument.  Moreover, we also see that
	\[
		I_{4,1}(x,y) + I_{4,1}(x,\tfrac{1}{y}) \xrightarrow{\delta}  - 2 I_{2}(x) \wedge I_3(y) \, ,
	\]
	using the inversion relation \( I_{3}(\tfrac{1}{y}) = I_3(y) \).  Notice that compared to \( I_{4,1}(x,y) \), the arguments in \( I_{4,1}(x, \tfrac{1}{y}) \) no longer constitute coupled cross-ratios.  We have \( I_{4,1}(x, y) = I_{4,1}(\infty 0 1 x, 0 \infty 1 y) \), and the first three variables no longer agree!
	
	If we can recognise some expression modulo \( \delta \), as a sum of terms of the form \( I_{2}(x) \wedge I_3(y) \), we can immediately write down \( I_{4,1} \) terms which agree with this expression modulo \( \delta \).   By using \autoref{id:i41asi32} we can also write down \( I_{3,2} \) terms which agree with this expression modulo \( \delta \).  We can then try to find \( \Li_5 \) terms to get an identity which holds modulo products.
		
	\subsubsection*{\( I_{3,2} \) in terms of \( I_{4,1} \)}  It is not possible to express \( I_{3,2} \) in terms of \( I_{4,1} \) using coupled cross-ratio arguments.  We can, however, find a `brute-force' identity which expresses \( I_{3,2} \) in terms of \( I_{4,1} \).
	
	\begin{Id}[\( I_{3,2} \) in terms of \( I_{4,1} \)]
		\label{id:i32asi41del}
		The following identity expresses a \( I_{3,2} \) in terms of \( I_{4,1} \) terms, modulo \( \delta \).
		\begin{align*}
			I_{3,2}(x,y) & \moddel -\tfrac{1}{2} \big( 
			3 I_{4,1}(x,y) 
			+ I_{4,1}(x,\tfrac{1}{y})
			+ I_{4,1}(x, \tfrac{x}{y}) + {} \\
			& \hspace{3.1em} {}
			+ I_{4,1}(x, \tfrac{y}{x})
			- I_{4,1}(y, \tfrac{x}{y})
			- I_{4,1}(y, \tfrac{y}{x}) \big) \, .
		\end{align*}
	\end{Id}
	
	Now one tries to find \( \Li_5 \) terms which lifts this to an identity modulo products.  This turns out to be possible, but the \( \Li_5 \) terms involved are \emph{significantly} more complicated that one might initially expect.
	
	\begin{Thm}[\( I_{3,2} \) in terms of \( I_{4,1} \) and \( \Li_5 \)]\label{thm:i32_as_i41_nonexplicit}
		It is possible to express \( I_{3,2} \) in terms of explicit \( I_{4,1} \), and explicit \( \Li_5 \) terms, modulo products.
	\end{Thm}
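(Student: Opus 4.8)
The plan is to take the congruence modulo \( \delta \) from \autoref{id:i32asi41del} and repair it, by an explicit \( \Q \)-linear combination of \( \Li_5 \) terms, into a congruence modulo products.

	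Write \( R(x,y) \) for the right-hand side of \autoref{id:i32asi41del} -- a fixed combination of \( I_{4,1} \)'s with rational-function arguments -- and set \( E(x,y) \coloneqq I_{3,2}(x,y) - R(x,y) \). By construction \( E \) vanishes modulo \( \delta \), so the image \( \Pi_5 \mathcal{S}(E) \in \Q(x,y)^{\otimes 5} \) of its symbol modulo products lies in the kernel of the coboundary \( \delta \) acting on weight \( 5 \) symbols. Hence, modulo products, \( E \) is a combination of depth-\( 1 \) weight \( 5 \) functions, i.e.\ of \( \Li_5 \)'s and Nielsen polylogarithms; what has to be checked is that the \( I_{4,1} \) part can be arranged so that \( \Li_5 \)'s alone suffice.

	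To exhibit the \( \Li_5 \) terms I would proceed by ``symbol solving''. First I would inspect the entries appearing in \( \Pi_5 \mathcal{S}(E) \): these are products of a short list of irreducible rational functions in \( x \) and \( y \), such as \( x \), \( y \), \( 1-x \), \( 1-y \), \( x-y \), \( 1-xy \), \( \tfrac{x}{y} \), \( \tfrac{1-x}{1-y} \), together with the few further factors that turn up. From these I would assemble a finite -- but deliberately enlarged -- list \( \mathcal{L} \) of candidate arguments, built as products and quotients (including higher-degree combinations) of those factors, compute \( \Pi_5 \mathcal{S}(\Li_5(z)) \) for each \( z \in \mathcal{L} \), and solve the linear system over \( \Q \) for coefficients \( c_z \) with \( \sum_{z \in \mathcal{L}} c_z\, \Pi_5 \mathcal{S}(\Li_5(z)) = \Pi_5 \mathcal{S}(E) \). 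A solution yields \( P(x,y) = \sum_{z \in \mathcal{L}} c_z \Li_5(z) \) with \( E \modsh P \), hence the asserted reduction \( I_{3,2}(x,y) \modsh R(x,y) + P(x,y) \). One then confirms the identity by verifying that the full symbol modulo products of \( I_{3,2}(x,y) - R(x,y) - P(x,y) \) vanishes, backed up by a numerical evaluation to exclude a \( \text{constant} \times \text{weight }4 \) discrepancy that the symbol cannot see.

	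The hard part will be the choice of the ansatz \( \mathcal{L} \): since the \( \Li_5 \) arguments turn out to be ``significantly more complicated than one might initially expect'', the naive candidates (single factors and their simplest ratios) do not span enough of \( \ker \delta \), and one must enlarge \( \mathcal{L} \) by more intricate rational functions -- for example quadratic expressions like \( \tfrac{x^2 y}{(x-y)^2} \) and their relatives -- before the linear system becomes consistent. Finding such an enlargement, keeping the computation tractable, and simultaneously confirming that no genuine Nielsen-polylogarithm contribution survives (so that \( \Li_5 \)'s really do suffice, possibly after also adjusting the \( I_{4,1} \) part away from the exact form in \autoref{id:i32asi41del}), is the delicate step; the remaining symbol arithmetic is routine and is carried out in Mathematica.
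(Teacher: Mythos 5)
Your strategy is essentially the paper's: the theorem is proved by exhibiting an explicit $141$-term $\Li_5$ correction to \autoref{id:i32asi41del} (with all arguments of the monomial form $\pm x^a(1-x)^b y^c(1-y)^d(x-y)^e$), found by a computer search over a candidate list and verified by computing the symbol modulo products. The only point to flag is that the successful ansatz had to admit arguments whose factor $1-\alpha$ introduces irreducible polynomials (such as $-x+x^2-xy+y^2$) that do not appear in the symbol of the left-hand side at all and must cancel among themselves -- precisely the enlargement of $\mathcal{L}$ you identify as the delicate step, and the reason the paper resorts to the MESA tree search rather than building candidates only from factors visible in $\Pi_5\mathcal{S}(E)$.
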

	
	The explicit expression for \autoref{thm:i32_as_i41_nonexplicit} is given in \autoref{id:app:i32_as_i41_and_li5}.  This serves as proof-of-concept that the `brute-force' approach can be a viable way of finding identities relating multiple polylogarithms.  The worksheet  \mathematicanb{wt5\_check\_bruteforce\_i32\_as\_i41.nb} checks the explicit version of this identity using the symbol
	
	\subsubsection*{\( I_{3,1,1} \) in terms of \( I_{3,2} \)}  We can try to proceed in the same way to find a reduction of \( I_{3,1,1} \) to \( I_{3,2} \) and \( I_5 \).
	
	\begin{Id}[\( I_{3,1,1} \) in terms of \( I_{3,2} \)] \label{id:i311_as_i32_mod_delta}
		The following identity expresses \( I_{3,1,1} \) in terms of \( I_{3,2} \) terms, modulo \( \delta \).
		\begin{align*}
		& \hspace{-2em} 3 I_{3, 1, 1}(a b c d e f) {} \moddel {} \\
		& I_{3, 2}( [a b c d e] 
		+ [a b c d f] 
		+ [a b c e d] 
		+ [a b c f d] 
		- [a c b d f] 
		- [a c b f d] 
		- [a d b e f] + {} \\
		& \hspace{2em} {}
		- [a d b f e] 
		+ [b a f c e] 
		+ [b a f e c] 
		- [b f a c e] 
		- [b f a e c] ) + {} \\[1ex]
		& {} + I_{3, 2}( [a b c e, a c b d] 
		+ [a b c e, a d b c] 
		+ [a b c f, a d c b] 
		- [a b d f, a e b f] 
		- [a b e f, a d e b] + {} \\
		& \hspace{3em} {}
		- [a b e f, a e d b] 
		+ [a c b d, a b c e] 
		+ [a d b c, a b c e] 
		- [a d b c, a b f c] 
		+ [a d b e, a b f e]  + {} \\
		& \hspace{3em} {}
		- [a d b f, a e b d] 
		+ [a e b d, a b f e] 
		- [a e b d, a d b f] 
		- [a e b f, a b d f] ) + {} \\[1ex]
		& {} + I_{3, 2}( [a b c d, a b f e] 
		- [a b e f, a b d c] 
		- [a b e f, a d b c] 
		+ [a c b d, a e c f] 
		- [a c b d, b c e f]  + {} \\
		& \hspace{3em} {}
		- [a c b d, b e c f] 
		- [a c d f, a d b e] 
		- [a c d f, a e b d] 
		+ [a c d f, a e b f] 
		- [a c e f, a d b c]  + {} \\
		& \hspace{3em} {}
		+ [a c e f, a d b e] 
		+ [a c e f, a e b d] 
		- [a d b c, a b e f] 
		- [a d b c, a c e f] 
		- [a d b e, a c d f]  + {} \\
		& \hspace{3em} {}
		+ [a d b e, a c e f] 
		- [a e b d, a c d f] 
		+ [a e b d, a c e f] 
		+ [a e b f, a c d f] 
		+ [a e c f, a c b d] + {} \\
		& \hspace{3em} {}
		+ [a f b e, b c d f] 
		+ [b c d f, a f b e] 
		- [b c e f, a c b d] 
		- [b e c f, a c b d] )
		\end{align*}
	\end{Id}
	
	\begin{proof}
		The worksheet \mathematicanb{wt5\_check\_ids.nb} checks this identity by computing the symbol.  The identity is given in \mathematicanb{wt5\_rules\_to\_i311\_i32\_i5.m}.
	\end{proof}
	
	\begin{Rem}
		In this identity, the \( I_{3,2} \) terms are grouped (roughly) according to their complexity.  Initially we have 12 terms of the form \( I_{3,2}(abcde) \), which constitute coupled cross-ratios.  One should think of these as the simplest kind of term.  Then we have 14 terms of the form \( I_{3,2}(abce, acbd) \); these do not exactly fit the form of a coupled cross-ratio, but they do involve only 5 of the 6 variables \( abcdef \).  This makes them of intermediate complexity.  Finally, we have 24 terms of the form \( I_{3,2}(abcd, abef) \), which contain all 6 of the variables in each term.  These are the most complex type of term.
	\end{Rem}

	This expression for \( I_{3,1,1} \) in terms of \( I_{3,2} \) holds modulo \( \delta \), only.  One would hope to be able to find \( \Li_5 \) terms to make this identity hold exactly modulo products.  Unfortunately, I have so far been unsuccessful in this step.  But given the existence of the \( \Li_5 \) terms in \autoref{id:app:i32_as_i41_and_li5}, which make the earlier `brute-force' identity (\autoref{id:i32asi41del}) expressing \( I_{3,2} \) in terms of \( I_{4,1} \) hold modulo products, one is cautiously optimistic that similar \( \Li_5 \) terms to make  \autoref{id:i311_as_i32_mod_delta} hold modulo products, \emph{do} in fact exist. \medskip
	
	If we apply \autoref{id:i311_as_i32_mod_delta} to the \( \phi_5' \) found in \autoref{thm:dan_wt5_as_311_32_5_nonexplicit} (and given explicitly in \autoref{thm:dan_wt5_as_311_32_5}), we obtain the following.
	
	\begin{Thm}[Reduction of \( I_{1,1,1,1,1} \) to \( I_{3,2} \)]\label{thm:dan_wt5_as_i32}
		Modulo \( \delta \), we can write
		\[
		[a \mid b, c, d, e, f \mid g] \modsh \phi_5''(a; b, c, d, e, f) - \phi_5''(g; b, c, d, e, f) \, ,
		\]
		where \( \phi_5'' \) is an \emph{explicit} combination of the following type of \( I_{3,2} \) terms:
		\begin{itemize}
			\item `Coupled cross-ratio terms' \( I_{3,2}(abcde) \),
			\item 5-variable cross-ratio terms \( I_{3,2}(abcd, abde) \), and
			\item 6-variable cross-ratio terms \( I_{3,2}(abcd, abef) \).
		\end{itemize}
	\end{Thm}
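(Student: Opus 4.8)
The plan is to obtain $\phi_5''$ by post-processing the reduction $\phi_5'$ produced in \autoref{thm:dan_wt5_as_311_32_5_nonexplicit}: we rewrite its $I_{3,1,1}$ part by means of \autoref{id:i311_as_i32_mod_delta}, and we simply discard its $I_5$ part, which is invisible modulo $\delta$. Concretely, recall that \autoref{thm:dan_wt5_as_311_32_5_nonexplicit} gives, modulo products,
\[
[a \mid b,c,d,e,f \mid g] \modsh \phi_5'(a;b,c,d,e,f) - \phi_5'(g;b,c,d,e,f) \, ,
\]
where $\phi_5'$ is an explicit $\Q$-linear combination of $I_{3,1,1}$, $I_{3,2}$ and $I_5$ terms, all with coupled cross-ratio arguments built from $\{a,b,c,d,e,f,\infty\}$. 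Since an identity valid modulo products is a fortiori valid modulo $\delta$ (which quotients further by the depth-$1$ terms), we may read this relation as a $\moddel$-identity and delete every $I_5 = \Li_5$ summand of $\phi_5'$; being depth $1$, these contribute $0$ modulo $\delta$. What is left is a combination of $I_{3,1,1}$ terms together with $I_{3,2}$ terms of the "coupled cross-ratio" type $I_{3,2}(abcde)$.

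Next I would apply \autoref{id:i311_as_i32_mod_delta} to each $I_{3,1,1}$ term. That identity expresses $3\,I_{3,1,1}(abcdef)$, modulo $\delta$, as an explicit sum of $I_{3,2}$ terms of exactly three shapes, catalogued in the remark following it: coupled cross-ratio terms $I_{3,2}(abcde)$ on five of the six letters; the $5$-variable terms such as $I_{3,2}(abce,acbd)$, again on only five letters but not in coupled form; and the $6$-variable terms such as $I_{3,2}(abcd,abef)$ involving all six letters. Feeding, into these six abstract letters, the six arguments of each coupled cross-ratio occurring in the $I_5$-free part of $\phi_5'$ — our actual variables, with at most one of them equal to $\infty$, so that genericity and the $\infty$-arithmetic of \autoref{def:crossratio} keep everything well defined — therefore yields only $I_{3,2}$ terms of the three types listed in the statement. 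Collecting these with the $I_{3,2}$ terms already present and clearing the overall factor $3$ defines $\phi_5''(a;b,c,d,e,f)$ as an explicit combination of the asserted form.

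Finally, because \autoref{thm:dan_wt5_as_311_32_5_nonexplicit} already has the antisymmetric shape $\phi_5'(a;\cdots) - \phi_5'(g;\cdots)$, and because the rewriting above is a fixed substitution rule depending only on $\phi_5'$ as a function of its slots (not on which variable sits in the distinguished first slot), carrying it out on both $\phi_5'(a;\cdots)$ and $\phi_5'(g;\cdots)$ produces $[a \mid b,c,d,e,f \mid g] \moddel \phi_5''(a;\cdots) - \phi_5''(g;\cdots)$, as required. The argument is thus structurally routine once the earlier results are granted; the genuine work — and the reason the explicit $\phi_5''$ is relegated to a supporting computation — is the size of the substitution and the subsequent simplification of the resulting mass of $I_{3,2}$ terms (using the cross-ratio symmetries $\CR(a,b,c,d) = \CR(b,a,d,c) = \CR(c,d,b,a) = \CR(d,c,b,a)$ together with the $\delta$-coboundary relations), and confirming on the level of the symbol modulo $\delta$ that no argument-shape outside the three listed ever arises. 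I expect this bookkeeping to be the main obstacle; it is carried out, and cross-checked via the symbol, in the worksheets \mathematicanb{wt5\_dan.nb} and \mathematicanb{wt5\_check\_dan.nb}.
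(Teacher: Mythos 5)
Your proposal is correct and takes essentially the same route as the paper: the paper likewise obtains \( \phi_5'' \) by applying \autoref{id:i311_as_i32_mod_delta} to the \( \phi_5' \) of \autoref{thm:dan_wt5_as_311_32_5_nonexplicit} (the \( I_5 \) terms being killed modulo \( \delta \)), so that the three listed argument shapes are exactly those produced by that identity together with the coupled cross-ratio \( I_{3,2} \) terms already present. The explicit substitution and collection of terms is, as you anticipate, delegated to the supporting Mathematica worksheets.
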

	
	In each case in \autoref{thm:dan_wt5_as_i32} above, (viewing \( [abcde] = [abcd,abce] \)), the number of cross-ratios which have a variable set to infinity is either 0, 1, or 2.  If we use the expression for \( \phi_5''(a; b, c, d, e, f) \) obtained by applying \autoref{id:i311_as_i32_mod_delta} to \autoref{thm:dan_wt5_as_311_32_5}, then we obtain the following distribution of terms.
		\begin{center}
			\begin{tabular}{c|c|c}
				Integral & Number of \( \infty \)'s & Number in \( \phi_5'' \) \\ \hline
				Coupled cross-ratio \( I_{3,2} \) & 0 & 68 \\
				Coupled cross-ratio \( I_{3,2} \) & 1 & 88 \\
				Coupled cross-ratio \( I_{3,2} \) & 2 & 276 \\
				5-variable cross-ratio \( I_{3,2} \) & 0 & 78 \\
				5-variable cross-ratio \( I_{3,2} \) & 1 & 155 \\
				5-variable cross-ratio \( I_{3,2} \) & 2 & 578 \\
				6-variable cross-ratio \( I_{3,2} \) & 0 & 48 \\
				6-variable cross-ratio \( I_{3,2} \) & 1 & 686\\
				6-variable cross-ratio \( I_{3,2} \) & 2 & 480 \\ \hline
				\multicolumn{2}{c|}{Total number} & 2457
			\end{tabular}
		\end{center}	
	Unfortunately, the expression for \( \phi_5'' \) is far too long to give explicitly, even in an appendix.
	
	{
		\emergencystretch10em
		\printbibliography[nottype=software]
		
		\defbibheading{bibliography}[\bibname]{%
			\subsection*{#1}
		}
		\printbibliography[type=software,title={Software}]
	}
	
	\hfill
	
	\appendix
	
	\section{Long identities relating weight 5 MPL's}\label{app:wt5}
	
	\subsection{\texorpdfstring{Reduction of \( I_{1,1,1,1,1} \) to \( I_{3,1,1} \), \( I_{3,2} \) and \( I_5 \)}{Reduction of I\_11111 to I\_311, I\_32 and I\_5}}
	
	The following identity gives explicitly the reduction of \( I_{1,1,1,1,1} \) to \( I_{3,1,1} \), \( I_{3,2} \) and \( I_{5} \), modulo products, which was alluded to in \autoref{thm:dan_wt5_as_311_32_5_nonexplicit}.
	
	\begin{Thm}\label{thm:dan_wt5_as_311_32_5}
			As shorthand, write \( abcd \coloneqq \CR(a,b,c,d) \) for the cross-ratio.  Moreover, recall the coupled cross-ratio shorthand notation from \autoref{not:ccr}, which has \( I_{s_1,\ldots,s_k}(abcd_1 \ldots d_k) \coloneqq \allowbreak I_{s_1,\ldots,s_k}(abcd_1, \ldots, abcd_k) \).  Then modulo products, we can give the following identity for \( I_{1,1,1,1,1} \).
		\[
		[a \mid b, c, d, e, f \mid g] \modsh \phi_5'(a; b, c, d, e, f) - \phi_5'(g; b, c, d, e, f) \, ,
		\]
		where
		\begin{align*}
		& \phi_5'(a; b, c, d, e, f) \coloneqq \\
		& I_{3, 1, 1}( -[b d a c e f] 
		+ [b d a e \infty c] 
		- [b d a f \infty c] 
		+ [b d a \infty e f] 
		+ [b d c a e \infty] 
		+ [b d c a \infty e] 
		+ [b d c \infty a e] + {} \\
		& \hspace{2.3em} {}
		+ [b d e \infty a f] 
		+ [b d \infty a e f] 
		+ [b d \infty e a f] 
		- [b f a c \infty e] 
		+ [b f a d \infty e] 
		- [b f a \infty c e] 
		+ [b f a \infty d c] + {} \\
		& \hspace{2.3em} {} 
		+ [b f a \infty d e] 
		- [b f a \infty e c] 
		+ [b f c a d \infty] 
		- [b f c a e d] 
		- [b f c a e \infty] 
		+ [b f c a \infty d] 
		+ [b f c d a \infty]  + {} \\
		& \hspace{2.3em} {}
		- [b f c e a \infty] 
		+ [b f d a \infty e] 
		- [b f e c a \infty] 
		+ [b f e d a \infty] 
		- [d \infty a c b f] 
		+ [d \infty a c e b] 
		+ [d \infty a c e f]  + {} \\
		& \hspace{2.3em} {}
		- [d \infty a c f b] 
		+ [d \infty a e b c] 
		+ [d \infty a e c b] 
		- [d \infty a f b c] 
		- [d \infty a f c b] 
		+ [d \infty b c a e] 
		+ [d \infty c a b e]  + {} \\
		& \hspace{2.3em} {}
		+ [d \infty c b a e] 
		+ [d \infty e b a f] 
		- [f \infty a b c e] 
		+ [f \infty a b d c] 
		+ [f \infty a b d e] 
		- [f \infty a b e c] 
		- [f \infty a c b e]  + {} \\
		& \hspace{2.3em} {}
		- [f \infty a c e b] 
		+ [f \infty a d b e] 
		+ [f \infty a d e b] 
		+ [f \infty c a e d] 
		+ [f \infty d a b e] 
		+ [f \infty d a e b] 
		+ [f \infty d e a b]  + {} \\
		& \hspace{2.3em} {}
		- [\infty b a c d f] 
		+ [\infty b a c e d] 
		- [\infty b a c f d] 
		- [\infty b a c f e] 
		+ [\infty b a d e f] 
		+ [\infty b a d f e] 
		+ [\infty b a e c d]  + {} \\
		& \hspace{2.3em} {}
		+ [\infty b a e d c] 
		- [\infty b a f c d] 
		- [\infty b a f c e] 
		+ [\infty b a f d e] 
		- [\infty b a f e c] 
		+ [\infty b c a d e] 
		+ [\infty b c a e d]  + {} \\
		& \hspace{2.3em} {}
		+ [\infty b c d a e] 
		+ [\infty b d a e f] 
		+ [\infty b d a f e] 
		+ [\infty b d c a e] 
		+ [\infty b d e a f] 
		+ [\infty b e d a f] ) + {} \\[1ex]
		& {} + I_{3, 2}( [b d a c e] 
		- [b d a e \infty] 
		+ [b d a \infty f] 
		+ [b d e \infty c] 
		+ [b d e \infty f] 
		+ 2 [b d \infty a e] 
		+ [b d \infty a f]  + {} \\
		& \hspace{3em} {}
		+ [b d \infty e c] 
		+ [b d \infty e f] 
		+ [b f a d \infty] 
		+ [b f a \infty c] 
		- 2 [b f a \infty d] 
		+ [b f a \infty e] 
		- [b f c a \infty]  + {} \\
		& \hspace{3em} {}
		- [b f c e \infty] 
		+ 3 [b f d a \infty] 
		+ [b f d e \infty] 
		+ [b f d \infty c] 
		- [b f e c \infty] 
		- [b f e \infty d] 
		+ [b f \infty d c]  + {} \\
		& \hspace{3em} {}
		- [b f \infty e d] 
		+ [b \infty c a d] 
		+ [b \infty c a e] 
		+ [b \infty d a e] 
		+ [b \infty d a f] 
		+ [d b e a f] 
		+ 2 [d b \infty a e]  + {} \\
		& \hspace{3em} {}
		+ [d \infty a b e] 
		- [d \infty a e b] 
		- [d \infty a e c] 
		+ [d \infty a e f] 
		+ 3 [d \infty b a e] 
		+ [d \infty b a f] 
		+ [d \infty b c e]  + {} \\
		& \hspace{3em} {}
		+ [d \infty b e f] 
		+ [d \infty c a e] 
		+ [d \infty c b e] 
		- [d \infty e f a] 
		- [d \infty e f b] 
		- [d \infty f e b] 
		- [f b c a e]  + {} \\
		& \hspace{3em} {}
		+ [f b d a e] 
		+ 2 [f b d a \infty] 
		- 2 [f b e a \infty] 
		- [f \infty a b d] 
		+ [f \infty a e b] 
		+ [f \infty b e d] 
		+ 2 [f \infty e a b]  + {} \\
		& \hspace{3em} {}
		+ [f \infty e b d] 
		- [\infty b a e c] 
		- [\infty b a e d] 
		+ 2 [\infty b a e f] 
		- [\infty b a f d] 
		+ [\infty b c a e] 
		+ [\infty b c d e]  + {} \\
		& \hspace{3em} {}
		+ [\infty b d a f] 
		+ [\infty b d e f] 
		+ 2 [\infty b e a f] 
		- [\infty b e f a] 
		+ 2 [\infty d b a e] 
		+ [\infty d b a f] 
		+ [\infty d c a b]  + {} \\
		& \hspace{3em} {}
		- [\infty d e a f] 
		+ [\infty f c a e] 
		- [\infty f d a e] 
		+ [\infty f e a b] ) + {} \\[1ex]
		& {} + \tfrac{1}{3} I_{3, 2}( -[b d a c f] 
		+ 4 [b d a e f] 
		- [b d a f c] 
		- 2 [b d a f e] 
		- [b d c a \infty] 
		- [b d c \infty a] 
		+ 7 [b d e a f]  + {} \\
		& \hspace{3.5em} {}
		- 2 [b d e f a] 
		- [b f a d e] 
		- [b f a e \infty] 
		- [b f c a d] 
		- [b f c d a] 
		+ 2 [b f d a e] 
		- [b f d c e] + {} \\
		& \hspace{3.5em} {}
		+ 2 [b f d e a] 
		- 7 [b f e a \infty] 
		+ 5 [b f e d \infty] 
		- [b f e \infty a] 
		- [b \infty d e a] 
		- [b \infty d e c] 
		+ [b \infty e d a]  + {} \\
		& \hspace{3.5em} {}
		- [b \infty e f a] 
		- [b \infty e f d] 
		+ [b \infty f e a] 
		+ [d \infty a c f] 
		+ [d \infty a f c] 
		- [d \infty a f e] 
		+ 4 [d \infty e b f]  + {} \\
		& \hspace{3.5em} {}
		+ [f b d e a] 
		+ [f b d e c] 
		- [f b e d a] 
		- 2 [f b e \infty a] 
		- 2 [f b e \infty d] 
		+ 2 [f b \infty e a] 
		+ [f \infty a b c]  + {} \\
		& \hspace{3.5em} {}
		+ [f \infty a c b] 
		+ [f \infty a d e] 
		+ [f \infty c a d] 
		+ [f \infty c d a] 
		- 2 [f \infty d a e] 
		+ [f \infty d c e] 
		- 2 [f \infty d e a]  + {} \\
		& \hspace{3.5em} {}
		+ 2 [\infty b a c f] 
		+ 4 [\infty b a d e] 
		+ 2 [\infty b a f c] 
		- [\infty b a f e] 
		+ [\infty b c a d] 
		+ [\infty b c d a] 
		+ 7 [\infty b d a e]  + {} \\
		& \hspace{3.5em} {}
		+ 4 [\infty b d c e] 
		- 2 [\infty b d e a] 
		+ 4 [\infty b e d f] 
		- [\infty d e f a] 
		- [\infty d e f b] 
		+ [\infty d f e a] 
		- [\infty f d e a]  + {} \\
		& \hspace{3.5em} {}
		- [\infty f d e c] 
		+ [\infty f e d a] ) + {} \\[1ex]
		& {} + I_{5}( 3 [a b d \infty] 
		+ 13 [a b f \infty] 
		- [a c b d] 
		- 3 [a d b f] 
		+ [a d f \infty] 
		+ 4 [a d \infty b] 
		+ 15 [a e b d]  + {} \\
		& \hspace{2.5em} {}
		+ 8 [a e b f] 
		+ 4 [a e b \infty] 
		+ 21 [a e d \infty] 
		- 8 [a e f b] 
		- 7 [a e f \infty] 
		+ 14 [a e \infty b] 
		+ 15 [a f b d]  + {} \\
		& \hspace{2.5em} {}
		- 3 [a f d \infty] 
		+ 10 [a f \infty b] 
		- 2 [a \infty b d] 
		- 8 [a \infty b f] 
		+ 6 [b d c e] 
		+ 5 [b d c \infty] 
		+ 13 [b d e f]  + {} \\
		& \hspace{2.5em} {}
		+ 6 [b d \infty e] 
		+ 12 [b d \infty f] 
		+ 6 [b e \infty f] 
		- 2 [b f c e] 
		- 8 [b f d e] 
		+ 12 [b f d \infty] 
		+ 8 [b \infty e c]  + {} \\
		& \hspace{2.5em} {}
		+ 13 [b \infty e d] 
		+ 8 [b \infty f d] 
		+ 6 [c e d \infty] 
		+ 2 [c e f \infty] ) + {} \\[1ex]
		& {} + \tfrac{1}{3} I_{5}( -4 [a d b \infty] 
		+ 4 [a d f b] 
		- 4 [a d \infty f] 
		- 4 [a e \infty d] 
		+ 16 [a e \infty f] 
		+ 16 [a f b \infty] 
		+ 16 [a f \infty d]  + {} \\
		& \hspace{3em} {}
		+ 32 [a \infty f b] 
		+ 23 [b e d \infty] 
		+ 19 [b f c d] 
		- 13 [b f e \infty] 
		- [b \infty d c] 
		+ 40 [b \infty f e] 
		- [c d f \infty]  + {} \\
		& \hspace{3em} {}
		+ 16 [d e f \infty] 
		+ 22 [d \infty e f] ) \, .
		\end{align*}
	\end{Thm}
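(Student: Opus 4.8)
The plan is to obtain this identity purely by substitution into, and rearrangement of, results already established: the raw reduction of \autoref{id:i11111_dan_reduction} together with the depth-reduction identities of \autoref{sec:dan_wt5_as_311_32_5}. No new mathematical input is needed; the work is in carrying out a large but finite symbolic computation and then certifying its correctness on the level of the symbol.

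First I would take as given from \autoref{id:i11111_dan_reduction} the expansion $[a \mid b,c,d,e,f \mid g] \modsh \phi_5(a;b,c,d,e,f) - \phi_5(g;b,c,d,e,f)$, where $\phi_5$ is an explicit $\Q$-linear combination of the eleven depth-$\leq 3$ weight-$5$ multiple polylogarithms $I_5$, $I_{4,1}$, $I_{3,2}$, $I_{2,3}$, $I_{1,4}$, $I_{3,1,1}$, $I_{2,2,1}$, $I_{2,1,2}$, $I_{1,3,1}$, $I_{1,2,2}$, $I_{1,1,3}$, each term carrying coupled cross-ratio arguments. It then suffices to rewrite every term outside the families $I_{3,1,1}$, $I_{3,2}$, $I_5$ using the identities of \autoref{sec:dan_wt5_as_311_32_5}. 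For the depth-$2$ part: Proposition 4.2.22 of \cite{charlton2016identities} sends $I_{1,4}$ to $I_{4,1}$ and $I_5$ terms, and $I_{2,3}$ to $I_{3,2}$ and $I_5$ terms, and \autoref{id:i41asi32} then sends $I_{4,1}$ to $I_{3,2}$ terms. For the depth-$3$ part: the four displayed identities send $I_{1,3,1}$ and $I_{2,2,1}$ to $I_{3,1,1}$, and $I_{1,1,3}$, $I_{2,1,2}$, $I_{1,2,2}$ to $I_{3,1,1}$, $I_{3,2}$ and $I_5$. Because every one of these substitution identities is stated with coupled cross-ratio arguments, applying them to $\phi_5$ --- whose arguments are the coupled cross-ratios $\CR(x, a_0, a_{n+1}, \cdot)$ produced by the conversion procedure in \autoref{sec:asmplccr} --- keeps every resulting term in coupled cross-ratio form. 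So the output $\phi_5'$ is, term by term, an explicit combination of $I_{3,1,1}$, $I_{3,2}$ and $I_5$ with coupled cross-ratio arguments; collecting like terms, and merging coincident $I_5$ arguments via $\CR(a,b,c,d) = \CR(b,a,d,c) = \CR(c,d,b,a) = \CR(d,c,b,a)$, gives the $69 + 125 + 48$ terms displayed above.

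The hard part is not conceptual but a matter of scale and of bookkeeping fidelity: the substitutions produce several hundred terms with non-trivial rational coefficients, and the relabelling of cross-ratio slots --- which of $a,b,c,d,e,f,\infty$ occupies each position of each $\CR$ --- is exactly the kind of step where hand computation goes wrong; as the introduction recounts, this is where Dan's own weight-$4$ and weight-$5$ reductions acquired their typos. I would therefore perform the substitution and final collection symbolically in Mathematica \cite{Mathematica}, as in the worksheet \mathematicanb{wt5\_dan.nb} checked against the precomputed data in \mathematicanb{wt5\_dan\_precomputed.m}, and then independently verify the resulting identity by computing the symbol of both sides with the PolylogTools package \cite{PolylogTools} (or its standalone `lite' reimplementation in \mathematicanb{PolyLogToolsLite.m}) in \mathematicanb{wt5\_check\_dan.nb}, checking that the difference vanishes modulo shuffle products. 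Since the symbol detects precisely the top slice of an MPL identity modulo products, vanishing of the symbol --- together with the fact that each input identity is itself a verified consequence of \cite{charlton2016identities}, and that \autoref{id:i11111_dan_reduction} is in turn a verified output of Dan's reduction method --- establishes \autoref{thm:dan_wt5_as_311_32_5}.
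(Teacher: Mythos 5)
Your proposal is correct and matches the paper's own route: the explicit $\phi_5'$ is obtained exactly by substituting the depth-2 and depth-3 identities of \autoref{sec:dan_wt5_as_311_32_5} (all of which preserve coupled cross-ratio arguments) into the raw output $\phi_5$ of \autoref{id:i11111_dan_reduction}, merging $I_5$ terms via the cross-ratio symmetries, and certifying the result by the symbol computation in the Mathematica worksheets (\mathematicanb{wt5\_dan.nb}, \mathematicanb{wt5\_check\_dan.nb}). No substantive difference from the paper's argument.
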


	\subsection{\texorpdfstring{\( I_{3,2} \) in terms of \( I_{4,1} \) and \( \Li_5 \)}{I\_32 in terms of I\_41 and Li\_5}}
	
	Introduce the notation
	\[
	\Li_5(\{ \pm ; a, b, c, d, e \}) \coloneqq \Li_5\big( \pm x^a (1-x)^b y^c (1-y)^d (x-y)^e \big) \, .
	\]
	Then following identity lifts the `brute-force' identity for \( I_{3,2} \) in terms of \( I_{4,1} \) from \autoref{id:i32asi41del}, to an identity with explicit \( \Li_5 \) terms, modulo products.
	
	\begin{Thm}[\( I_{3,2} \) in terms of \( I_{4,1} \) and \( \Li_5 \)]\label{id:app:i32_as_i41_and_li5}
		The following identity expresses \( I_{3,2}(x,y) \) in terms of 6 \( I_{4,1} \) term, and 141 \( \Li_5 \) terms, modulo products.
		\begin{align*}
		& \tfrac{22}{9} \Big(I_{3,2}(x, y) + {} \\
		& \hspace{1em} +\tfrac{1}{2} \left( I_{4,1}(x, \tfrac{1}{y})+I_{4,1}(x, \tfrac{x}{y})+I_{4,1}(x, \tfrac{y}{x})+3 I_{4,1}(x, y)-I_{4,1}(y, \tfrac{x}{y})-I_{4,1}(y, \tfrac{y}{x}) \right) \Big) \modsh \\[1ex]
		& \tfrac{1}{9} \Li_5\big( \{+; 0, 1, -1, -3, 1\}
		+ \{+; 0, 1, -1, 3, -2\}
		+ \{+; 0, 2, -2, 0, -1\}
		+ \{+; 0, 2, -2, 0, 1\} + {} \\ 
		& \hspace{1em} {}
		- \{+; 0, -2, -3, 1, 3\}
		+ \{+; 0, -3, -1, -1, 2\}
		+ \{-; 1, 0, 0, -1, -2\}
		+ \{-; 1, 0, 0, -1, 2\} + {} \\ 
		& \hspace{1em} {}
		+ \{+; 1, 0, -3, 2, 1\}
		- \{+; -1, -1, 0, -3, 2\}
		- \{-; 1, 1, -1, 2, 0\}
		- \{-; -1, 1, -1, -2, 3\} + {} \\ 
		& \hspace{1em} {}
		- \{+; -1, 2, -2, 1, 0\}
		- \{+; 1, 2, -2, -1, 0\}
		- \{-; 1, -2, -3, 0, 1\}
		+ \{-; -1, 3, 0, -2, 1\} + {} \\ 
		& \hspace{1em} {}
		- \{+; -1, -3, -1, 0, 3\}
		- \{-; 1, -3, -2, 3, 0\}
		- \{-; 2, 0, -1, 3, -3\}
		+ \{-; 2, 0, -3, 1, -1\} + {} \\ 
		& \hspace{1em} {}
		- \{-; -2, 1, -1, -1, 0\}
		- \{-; 2, 1, -1, 1, -3\}
		- \{-; 2, -1, -3, 3, -1\}
		+ \{+; -2, 3, 0, -1, -1\} + {} \\ 
		& \hspace{1em} {}
		+ \{+; 3, 0, -1, 2, -1\}
		- \{-; -3, -1, 0, -1, 3\}
		+ \{+; 3, 1, -1, 0, -1\}
		+ \{+; -3, 1, -1, 0, 2\} + {} \\ 
		& \hspace{1em} {}
		- \{-; 3, -1, -3, 2, 0\}
		+ \{+; 3, -3, -2, 1, 1\} \big) + {} \\ 
		& {} + \tfrac{4}{9} \Li_5\big( -\{-; 0, 0, -1, 2, -1\}
		- \{+; 0, 0, 2, -1, -1\}
		+ \{+; 0, -1, 0, -1, 0\}
		- \{-; 0, 1, -1, 0, -1\} + {} \\ 
		& \hspace{1em} {}
		- \{-; 0, 1, -1, 0, 1\}
		- \{-; 0, -2, -1, 0, 1\}
		- \{+; 1, 0, 0, -1, -1\}
		- \{+; 1, 0, 0, -1, 1\} + {} \\ 
		& \hspace{1em} {}
		+ \{-; 1, 1, 0, 0, -2\}
		+ \{+; -1, 1, -1, 1, 0\}
		+ \{+; 1, 1, -1, -1, 0\}
		+ \{-; 1, -1, -2, 2, 0\} + {} \\ 
		& \hspace{1em} {}
		- \{+; -2, 0, 0, -1, 1\}
		+ \{+; 2, -1, -2, 1, 0\} \big) + {} \\ 
		& {} + \tfrac{5}{9} \Li_5\big( \{-; 0, 0, 1, 1, -2\}
		- \{+; 0, 1, 0, -2, 0\}
		+ \{-; 0, -1, -2, 0, 1\}
		+ \{+; 0, -1, -2, 0, 2\} + {} \\ 
		& \hspace{1em} {}
		- \{+; 0, 2, 0, -1, 0\}
		+ \{+; -1, 0, 0, -2, 1\}
		- \{+; -1, 2, 0, 0, -1\}
		+ \{+; -2, 0, 0, -1, 2\} + {} \\ 
		& \hspace{1em} {}
		- \{-; 2, -1, 0, 0, -1\}
		- \{-; 2, -2, -1, 1, 0\} \big) + {} \\ 
		& {} + \tfrac{2}{3} \Li_5\big( \{+; 0, 0, 0, 0, -1\}
		+ \{+; 0, 0, 0, 1, -1\}
		+ \{+; 0, -1, -1, 1, 0\}
		- \{-; 0, -1, -1, 1, 1\} + {} \\ 
		& \hspace{1em} {}
		+ \{-; 1, 0, 0, -1, 0\}
		- \{+; 1, 0, 0, -1, 0\}
		+ \{+; -1, 0, -1, 0, 1\}
		- \{-; 1, -1, -1, 0, 0\} + {} \\ 
		& \hspace{1em} {}
		+ \{+; 1, -1, -1, 1, 0\} \big) + {} \\ 
		& {} + \Li_5\big( -\{+; 0, 0, 0, -2, 1\}
		- \{-; 0, 0, 1, -1, -1\}
		- \{+; 0, 1, 0, -1, 1\}
		- \{+; 0, 1, -1, -1, 0\} + {} \\ 
		& \hspace{1em} {}
		+ \{-; 0, 1, -1, 1, 0\}
		+ \{-; 0, -1, -1, -1, 1\}
		+ \{-; 0, 1, -1, -1, 1\}
		- \{+; 0, 1, -1, 1, -1\} + {} \\ 
		& \hspace{1em} {}
		+ \{-; 0, -2, -1, 1, 1\}
		+ \{+; -1, 0, 0, -1, 0\}
		+ \{+; -1, 0, 0, -1, 2\}
		- \{+; 1, 0, -1, 0, -1\} + {} \\ 
		& \hspace{1em} {}
		+ \{-; 1, 0, -1, 2, -1\}
		- \{-; 1, 0, -2, 1, 0\}
		- \{-; -1, 1, 0, 0, -1\}
		- \{+; -1, -1, 0, -1, 1\} + {} \\ 
		& \hspace{1em} {}
		+ \{-; -1, -1, 0, -1, 2\}
		+ \{-; -1, 1, -1, 0, 0\}
		- \{+; 1, 1, -1, 0, 0\}
		- \{+; -1, 1, -1, 0, 1\} + {} \\ 
		& \hspace{1em} {}
		+ \{-; 1, 1, -1, 0, -1\}
		+ \{-; -1, -1, -1, 0, 2\}
		+ \{+; 1, -1, -1, 2, -1\}
		+ \{+; 1, -1, -2, 1, 0\} + {} \\ 
		& \hspace{1em} {}
		+ \{-; 1, -1, -2, 1, 1\}
		- \{-; -1, 2, 0, -1, 0\}
		- \{+; 1, -2, -1, 0, 1\}
		- \{+; 2, 0, 0, 0, -1\} + {} \\ 
		& \hspace{1em} {}
		+ \{+; 2, 0, -1, 1, -1\}
		- \{+; 2, -1, -1, 1, 0\} \big) + {} \\  
		& {} + \tfrac{5}{3} \Li_5\big( -\{+; 0, 0, -1, -1, 1\}
		- \{+; 0, 1, 0, 0, 0\}
		+ \{-; 0, 1, 0, 0, -1\}
		+ \{+; 0, -1, 0, -1, 1\} + {} \\ 
		& \hspace{1em} {}
		+ \{+; 0, -1, -1, 1, 1\}
		- \{-; 0, -1, -1, -1, 2\}
		- \{-; -1, 0, -1, 0, 1\}
		+ \{-; 1, 1, 0, 0, -1\} + {} \\ 
		& \hspace{1em} {}
		- \{+; -1, 1, 0, -1, 0\}
		- \{-; -1, 1, 0, -1, 1\}
		+ \{+; -1, 1, 0, -1, 1\}
		- \{+; -1, 1, 0, -2, 1\} + {} \\ 
		& \hspace{1em} {}
		+ \{+; 1, -1, -1, 0, 0\}
		- \{-; 1, -1, -1, 0, 1\}
		- \{-; 1, -2, -1, 1, 1\} \big) + {} \\ 
		& {} + \tfrac{8}{3} \Li_5\big( -\{-; 0, 0, 0, 0, -1\}
		+ \{-; 0, 0, -1, 0, 1\}
		+ \{+; 0, 1, 0, -1, 0\}
		- \{-; 0, -1, 0, -1, 1\} + {} \\ 
		& \hspace{1em} {}
		- \{-; 0, 1, -1, 0, 0\}
		+ \{+; 0, 1, -1, 0, 0\}
		+ \{-; 1, 0, -1, 1, 0\}
		- \{+; 1, 0, -1, 1, 0\} + {} \\ 
		& \hspace{1em} {}
		+ \{+; 1, 0, -1, 1, -1\} \big) + {} \\ 
		& {} + \tfrac{11}{3} \Li_5\big( \{+; 0, 0, 0, 1, 0\}
		- \{-; 0, 0, -1, 1, 0\}
		- \{+; 1, 0, 0, 0, -1\}
		- \{-; -1, 1, 0, -1, 0\} \big) + {} \\ 
		& {} + \tfrac{4}{3} \Li_5\big( -\{-; 0, -1, -1, 0, 1\}
		- \{+; -1, 0, 0, -1, 1\} \big) + \tfrac{20}{3} \Li_5\big( \{-; -1, 1, 0, 0, 0\} \big) + {} \\
		& {} + \tfrac{14}{3} \Li_5\big( -\{-; 0, 0, -1, -1, 1\}
		- \{+; 1, -1, -1, 0, 1\} \big) + \tfrac{1}{6} \Li_5\big( \{+; 1, 0, -1, 0, 0\} \big) + {} \\
		& {} + \tfrac{103}{54} \Li_5\big( -\{+; -1, 0, -1, 0, 0\} \big) + \tfrac{107}{54} \Li_5\big( \{+; 1, 0, -2, 0, 0\} \big) + \tfrac{157}{54} \Li_5\big( -\{+; 2, 0, -1, 0, 0\} \big) + {} \\ 
		& {} + 3 \Li_5\big( \{-; 1, 0, -1, 1, -1\}
		+ \{+; 1, 1, 0, 0, -1\} \big) + 4 \Li_5\big( \{-; 0, -1, -1, 1, 0\} \big) + {} \\ 
		& {} + \tfrac{5}{27} \Li_5\big( \{-; -1, 0, -1, -3, 3\}
		+ \{+; 1, -3, -2, 0, 3\}
		+ \{-; 2, -3, -1, 3, 0\} \big) + {} \\ 
		& {} + \tfrac{7}{18} \Li_5\big( \{+; 0, 0, -1, 0, 0\}
		- \{+; 1, -2, -1, 2, 0\} \big) + \tfrac{23}{18} \Li_5\big( -\{+; -1, 0, 0, 0, 0\} \big) + {} \\ 
		& {} + \tfrac{5}{18} \Li_5\big( -\{+; 0, -2, -1, 0, 2\}
		- \{+; -1, 0, 0, -2, 2\} \big) \, .
		\end{align*}
	\end{Thm}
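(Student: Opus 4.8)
The statement is a symbol-level identity ``modulo products'', so the plan is to prove it by showing that the projection $\Pi_5$ of \cite{duhr2012polygons} annihilates the difference of the symbols of the two sides, and — more substantively — to explain how the explicit $\Li_5$ terms on the right are produced. Write $E(x,y)$ for the combination on the left, namely $\tfrac{22}{9}$ times $I_{3,2}(x,y)$ together with the six $I_{4,1}$ terms; the factor $\tfrac{22}{9}$ is merely a normalisation chosen so that the resulting $\Li_5$ coefficients take the recorded form. By \autoref{id:i32asi41del} we already know $E(x,y) \moddel 0$, so the symbol $\mathcal{S}(E)$, taken modulo products, lies in the kernel of the coboundary $\delta$. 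Since the symbol of $\Li_5(w)$ of a rational function $w$ is, up to the usual modification, $-(1-w)\otimes w\otimes w\otimes w\otimes w$, and such symbols span a large subspace of that kernel, one expects $\mathcal{S}(E)$ to be expressible, modulo products, as an explicit $\Q$-linear combination of symbols $\mathcal{S}\big(\Li_5(\pm\, x^a(1-x)^b y^c(1-y)^d(x-y)^e)\big)$.

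First I would compute $\mathcal{S}(E)$ explicitly from the known symbols of $I_{3,2}$ and $I_{4,1}$ (via Goncharov's coproduct formula, or directly in PolyLogTools \cite{PolylogTools}), and apply $\Pi_5$ to pass to the quotient by shuffles. The result lives in the tensor algebra on the alphabet $\{x,\,1-x,\,y,\,1-y,\,x-y\}$ (together with $-1$): this is the natural alphabet, because for each argument $w$ that occurs — the cross-ratio arguments of $I_{3,2}(x,y)$ and the arguments $\tfrac1y,\tfrac xy,\tfrac yx$ of the $I_{4,1}$ terms — both $w$ and $1-w$ factor into monomials in these five functions. Next I would set up an ansatz: take a general $\Q$-linear combination of the projected symbols $\Pi_5\mathcal{S}\big(\Li_5(\varepsilon\, x^a(1-x)^b y^c(1-y)^d(x-y)^e)\big)$ with $\varepsilon=\pm1$ and the exponent vector $(a,b,c,d,e)$ ranging over a suitably large finite box, and solve the resulting linear system over $\Q$ forcing this combination to equal $\Pi_5\mathcal{S}(E)$. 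Any solution proves the identity, and the explicit solution recorded in the statement (with its $141$ terms) is one such.

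The main obstacle is exactly this linear-algebra search: the space of candidate $\Li_5$ arguments is large, the exponent box must be chosen generous enough to contain a solution yet small enough to keep the system tractable, and one must be scrupulous about working consistently modulo products throughout (applying $\Pi_5$ both to the target and to every basis symbol). The relatively unpleasant denominators ($54$, $18$, $27$, $\ldots$) appearing in the final combination show that no solution with small integer coefficients exists over a naive spanning set, so some experimentation with the choice of arguments is unavoidable; this also explains why the number of $\Li_5$ terms is so much larger than one might naively hope. Once a candidate combination is in hand, the identity follows from a finite symbol computation — carried out in \mathematicanb{wt5\_check\_bruteforce\_i32\_as\_i41.nb} — checking that $\Pi_5$ kills the difference of the two sides.
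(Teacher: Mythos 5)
Your proposal matches the paper's approach essentially verbatim: the identity is produced by an ansatz of \( \Li_5\big(\pm x^a(1-x)^b y^c(1-y)^d(x-y)^e\big) \) terms whose coefficients are found by linear algebra on the symbol modulo products, and it is then certified by a finite symbol computation in \mathematicanb{wt5\_check\_bruteforce\_i32\_as\_i41.nb}. The one point you underplay --- and which the paper's accompanying remark singles out as the actual obstacle --- is that for most candidate exponent vectors the first symbol entry \( 1-w \) of \( \Li_5(w) \) introduces irreducible factors such as \( -x+x^2-xy+y^2 \) lying \emph{outside} the alphabet \( \{x,\,1-x,\,y,\,1-y,\,x-y\} \) (your factorisation claim holds for the arguments on the left-hand side, not for the ansatz arguments), so the search must be organised so that these extra primes cancel among the chosen terms; this is why the candidates were generated with Radchenko's MESA \texttt{set\_extra\_primes\_tree\_search} rather than by a naive box search, which would likely be infeasible.
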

	
	\begin{proof}
		This identity is given in \mathematicanb{wt5\_bruteforce\_i32\_as\_i41\_and\_li5.m}.  Using the worksheet  \mathematicanb{wt5\_check\_bruteforce\_i32\_as\_i41.nb}, this identity can be checked by computing the symbol.
	\end{proof}
	
	\begin{Rem}
		The candidate \( \Li_5 \) arguments which eventually produced this identity were generated using Radchenko's sage package MESA \cite{Mesa}, and the \texttt{set\_{}extra\_primes\_tree\_search} routine.
		
		This allows a good choice of \emph{extra} factors to appear in \( 1 - \alpha \), when computing the symbol of \( \Li_n(\alpha) \).  For example, even though the factor \(  -x + x^2 - x y + y^2 \) does not appear anywhere in the symbol of the left hand side of \autoref{id:app:i32_as_i41_and_li5}, it does appear in the symbol of the following \( \Li_5 \) terms  on the right hand side.
		\begin{align*}
		& \Li_5\Big( -\tfrac{1}{9} \{ +; 1, 2, -2, -1, 0 \}
		-\tfrac{1}{9} \{ -; 2, 1, -1, 1, -3 \}
		+ \{ -; 0, 1, -1, -1, 1 \} + {} \\
		& \hspace{2.5em} {}
		-\tfrac{1}{9} \{ -; -1, 1, -1, -2, 3 \} 
		+ \{ -; 1, 1, -1, 0, -1 \}
		+ \{ +; -1, 0, 0, -1, 2 \} \Big) \\
		& = \Li_5\Big( -\tfrac{1}{9}\left[\tfrac{(1-x)^2 x}{(1-y) y^2}\right]
		-\tfrac{1}{9}\left[-\tfrac{(1-x) x^2 (1-y)}{y (x-y)^3}\right]
		+\left[-\tfrac{(1-x) (x-y)}{(1-y) y}\right] + {} \\
		& \hspace{3.5em} {} 
		-\tfrac{1}{9}\left[-\tfrac{(1-x) (x-y)^3}{x (1-y)^2 y}\right]
		+\left[-\tfrac{(1-x) x}{y (x-y)}\right]
		+\left[\tfrac{(x-y)^2}{x (1-y)}\right] \Big) \, .
		\end{align*}
		Somehow, these terms conspire to combine in just the right way as to make this factor disappear in the end.  Because of this phenomenon, and the large number of potential arguments otherwise, finding \autoref{id:app:i32_as_i41_and_li5} would potentially be very difficult if not for the MESA software \cite{Mesa}.
	\end{Rem}
	
\end{document}